\theoremstyle{plain}
\newtheorem{Thm}{Theorem}[section]
\newtheorem{Lem}[Thm]{Lemma}
\newtheorem{Cor}[Thm]{Corollary}
\newtheorem{Pro}[Thm]{Proposition}
\newtheorem{Prp}[Thm]{Properties}
\newtheorem{Sub}[Thm]{Sublemma}
\theoremstyle{definition}
\newtheorem{Def}[Thm]{Definition}
\newtheorem{Exm}[Thm]{Example}
\newtheorem{Exs}[Thm]{Examples}
\theoremstyle{remark}
\newtheorem{Rem}[Thm]{Remark}
\newtheorem{Rms}[Thm]{Remarks}
\newtheorem*{Com}{Commentary}
\newcommand{\myEmail}{piotr.niemiec@uj.edu.pl}
\newcommand{\myAddress}{\noindent{}Piotr Niemiec\\{}Jagiellonian University\\{}Institute of Mathematics\\{}
   ul. \L{}ojasiewicza 6\\{}30-348 Krak\'{o}w\\{}Poland}
\newcommand{\myData}{\author[P. Niemiec]{Piotr Niemiec}\address{\myAddress}\email{\myEmail}}
\newcommand{\NNN}{\mathbb{N}}
\newcommand{\QQQ}{\mathbb{Q}}\newcommand{\RRR}{\mathbb{R}}
\newcommand{\CCc}{\CMcal{C}}
\newcommand{\FFf}{\CMcal{F}}\newcommand{\HHh}{\CMcal{H}}
\newcommand{\KKk}{\CMcal{K}}
\newcommand{\SSs}{\CMcal{S}}
\newcommand{\WWw}{\CMcal{W}}
\newcommand{\ZzZ}{\EuScript{Z}}
\newcommand{\Aa}{\mathfrak{A}}\newcommand{\Bb}{\mathfrak{B}}
\newcommand{\Mm}{\mathfrak{M}}\newcommand{\Nn}{\mathfrak{N}}
\newcommand{\mM}{\mathfrak{m}}
\newcommand{\SECT}[1]{\section{#1}\renewcommand{\theequation}{\thesection-\arabic{equation}}\setcounter{equation}{0}}
\newcounter{help}
\newcommand{\ITE}[3]{\ifthenelse{#1}{#2}{#3}}\newcommand{\ITEE}[3]{\ITE{\equal{#1}{#2}}{#3}{}}
\newcommand{\diam}{\operatorname{diam}}
\newcommand{\dom}{\operatorname{dom}}\newcommand{\im}{\operatorname{im}}
\newcommand{\lin}{\operatorname{lin}}
\newcommand{\card}{\operatorname{card}}
\newcommand{\Metr}{\operatorname{Metr}}
\newcommand{\scalarr}{\langle\cdot,\mathrm{-}\rangle}
\newcommand{\scalar}[2]{\langle #1,#2\rangle}\newcommand{\leqsl}{\leqslant}\newcommand{\geqsl}{\geqslant}
\newcommand{\epsi}{\varepsilon}\newcommand{\dd}{\colon}
\newcommand{\dint}[1]{\,\textup{d} #1}
\newcommand{\tfcae}{the following conditions are equivalent:}
\newcommand{\THM}[1]{Theorem~\textup{\ref{thm:#1}}}
\newcommand{\COR}[1]{Corollary~\textup{\ref{cor:#1}}}
\newcommand{\LEM}[1]{Lemma~\textup{\ref{lem:#1}}}\newcommand{\PRO}[1]{Proposition~\textup{\ref{pro:#1}}}
\newcommand{\EXM}[1]{Example~\textup{\ref{exm:#1}}}
\newenvironment{thm}[1]{\begin{Thm}\label{thm:#1}}{\end{Thm}}\newenvironment{lem}[1]{\begin{Lem}\label{lem:#1}}{\end{Lem}}
\newenvironment{cor}[1]{\begin{Cor}\label{cor:#1}}{\end{Cor}}\newenvironment{pro}[1]{\begin{Pro}\label{pro:#1}}{\end{Pro}}
\newenvironment{dfn}[1]{\begin{Def}\label{def:#1}}{\end{Def}}
\newenvironment{exm}[1]{\begin{Exm}\label{exm:#1}}{\end{Exm}}
\newenvironment{rem}[1]{\begin{Rem}\label{rem:#1}}{\end{Rem}}
\newenvironment{thmm}[2]{\begin{Thm}[#2]\label{thm:#1}}{\end{Thm}}
\newenvironment{lemm}[2]{\begin{Lem}[#2]\label{lem:#1}}{\end{Lem}}
\newcommand{\bibITEM}[2]{\ITE{\equal{#2}{}}{\bibitem{#1} }{\bibitem[#2]{#1} }}
\newcommand{\BIB}[8]{
   \bibITEM{#1}{#8} #2, \textit{#3}, #4{} \textbf{#5} (#6), #7.}
\newcommand{\myBIB}[6][P. Niemiec]{#1, \textit{#2}, #3{}\ITE{\equal{#4}{}}{}{ \textbf{#4}} (#5), #6.}
\newcommand{\BIb}[6]{
   \bibITEM{#1}{#6} #2, \textit{#3}, #4, #5.}
\newcommand{\BiB}[9]{
   \bibITEM{#1}{#9} #2, \textit{#3}, #4{} \textit{#5}, #6, #7, #8.}
\newcommand{\myBAPP}[3][P. Niemiec]{
   #1, \textit{#2}, #3}
\newcommand{\jRN}[2][]{
   \ITEE{#2}{ActaM}{\ITE{\equal{#1}{+}}
      {Acta Mathematica}{Acta Math.}}
   \ITEE{#2}{ActaMSinES}{\ITE{\equal{#1}{+}}
      {Acta Mathematica Sinica (English Series)}{Acta Math. Sin. (Engl. Ser.)}}
   \ITEE{#2}{AdvM}{\ITE{\equal{#1}{+}}
      {Advances in Mathematics}{Adv. in Math.}}
   \ITEE{#2}{ACS}{\ITE{\equal{#1}{+}}
      {Applied Categorical Structures}{Appl. Categor. Struct.}}
   \ITEE{#2}{ActaSM}{\ITE{\equal{#1}{+}}
      {Acta Scientiarum Mathematicarum}{Acta Sci. Math.}}
   \ITEE{#2}{AmJM}{\ITE{\equal{#1}{+}}
      {American Journal of Mathematics}{Amer. J. Math.}}
   \ITEE{#2}{AmMMon}{\ITE{\equal{#1}{+}}
      {American Mathematical Monthly}{Amer. Math. Mon.}}
   \ITEE{#2}{AnnSciEcNormSupT}{\ITE{\equal{#1}{+}}
      {Annales Scientifiques de l'\'{E}cole Normale Sup\'{e}rieure (3)}{Ann. Sci. \'{E}c. Norm. Sup\'{e}r. (3)}}
   \ITEE{#2}{AnnM}{\ITE{\equal{#1}{+}}
      {Annals of Mathematics}{Ann. Math.}}
   \ITEE{#2}{AnnProb}{\ITE{\equal{#1}{+}}
      {The Annals of Probability}{Ann. Probab.}}
   \ITEE{#2}{AnnPALog}{\ITE{\equal{#1}{+}}
      {Annals of Pure and Applied Logic}{Ann. Pure Appl. Logic}}
   \ITEE{#2}{APM}{\ITE{\equal{#1}{+}}
      {Annales Polonici Mathematici}{Ann. Polon. Math.}}
   \ITEE{#2}{ArchM}{\ITE{\equal{#1}{+}}
      {Archiv der Mathematik}{Arch. Math.}}
   \ITEE{#2}{AttiAccLincRendNat}{\ITE{\equal{#1}{+}}
      {Atti della Accademia Nazionale dei Lincei. Rendiconti. Classe di Scienze Fisiche, Matematiche e Naturali}
      {Atti Accad. Naz. Lincei Rend. Cl. Sci. Fis. Mat. Nat.}}
   \ITEE{#2}{BAMS}{\ITE{\equal{#1}{+}}
      {Bulletin of the American Mathematical Society}{Bull. Amer. Math. Soc.}}
   \ITEE{#2}{BAustrMS}{\ITE{\equal{#1}{+}}
      {Bulletin of the Australian Mathematical Society}{Bull. Austral. Math. Soc.}}
   \ITEE{#2}{BLondMS}{\ITE{\equal{#1}{+}}
      {Bulletin of the London Mathematical Sociecy}{Bull. Lond. Math. Soc.}}
   \ITEE{#2}{BAPolSSSM}{\ITE{\equal{#1}{+}}
      {Bulletin de l'Acad\'{e}mie Polonaise des Sciences. S\'{e}rie des Sciences Math\'{e}matiques}
      {Bull. Acad. Pol. Sci. S\'{e}r. Sci. Math.}}
   \ITEE{#2}{BullSM}{\ITE{\equal{#1}{+}}
      {Bulletin des Sciences Math\'{e}matiques}{Bull. Sci. Math.}}
   \ITEE{#2}{BullPol}{\ITE{\equal{#1}{+}}
      {Bulletin of the Polish Academy of Sciences: Mathematics}{Bull. Pol. Acad. Sci. Math.}}
   \ITEE{#2}{CanadJM}{\ITE{\equal{#1}{+}}
      {Canadian Journal Mathematics}{Canad. J. Math.}}
   \ITEE{#2}{CollectM}{\ITE{\equal{#1}{+}}
      {Collectanea Mathematica}{Collect. Math.}}
   \ITEE{#2}{CMUC}{\ITE{\equal{#1}{+}}
      {Commentationes Mathematicae Universitatis Carolinae}{Comment. Math. Univ. Carolin.}}
   \ITEE{#2}{CRParis}{\ITE{\equal{#1}{+}}
      {C. R. Paris}{C. R. Paris}}
   \ITEE{#2}{CRASParis}{\ITE{\equal{#1}{+}}
      {Comptes Rendus de l'Acad\'{e}mie des Sciences. Paris}{C. R. Acad. Sci. Paris}}
   \ITEE{#2}{CEurJM}{\ITE{\equal{#1}{+}}
      {Central European Journal of Mathematics}{Cent. Eur. J. Math.}}
   \ITEE{#2}{CMHelv}{\ITE{\equal{#1}{+}}
      {Commentarii Mathematici Helvetici}{Comment. Math. Helv.}}
   \ITEE{#2}{CollM}{\ITE{\equal{#1}{+}}
      {Colloquium Mathematicum}{Coll. Math.}}
   \ITEE{#2}{ComposM}{\ITE{\equal{#1}{+}}
      {Compositio Mathematica}{Compos. Math.}}
   \ITEE{#2}{CzMJ}{\ITE{\equal{#1}{+}}
      {Czechoslovak Mathematical Journal}{Czech. Math. J.}}
   \ITEE{#2}{DissM}{\ITE{\equal{#1}{+}}
      {Dissertationes Mathematicae}{Dissert. Math.}}
   \ITEE{#2}{DANSSSR}{\ITE{\equal{#1}{+}}
      {Doklady Akademii Nauk SSSR}{Dokl. Akad. Nauk SSSR}}
   \ITEE{#2}{DukeMJ}{\ITE{\equal{#1}{+}}
      {Duke Mathematical Journal}{Duke Math. J.}}
   \ITEE{#2}{ELA}{\ITE{\equal{#1}{+}}
      {The Electronic Journal of Linear Algebra}{Electron. J. Linear Algebra}}
   \ITEE{#2}{ExtrM}{\ITE{\equal{#1}{+}}
      {Extracta Mathematicae}{Extracta Math.}}
   \ITEE{#2}{FM}{\ITE{\equal{#1}{+}}
      {Fundamenta Mathematicae}{Fund. Math.}}
   \ITEE{#2}{FAA}{\ITE{\equal{#1}{+}}
      {Functional Analysis and its Applications}{Funct. Anal. Appl.}}
   \ITEE{#2}{FunkAnalPril}{\ITE{\equal{#1}{+}}
      {Funktsional'ny\u{\i} Analiz i Ego Prilozheniya}{Funkts. Anal. Prilozh.}}
   \ITEE{#2}{GTopA}{\ITE{\equal{#1}{+}}
      {General Topology and its Applications}{General Topol. Appl.}}
   \ITEE{#2}{HJM}{\ITE{\equal{#1}{+}}
      {Houston Journal of Mathematics}{Houston J. Math.}}
   \ITEE{#2}{IllinoisJM}{\ITE{\equal{#1}{+}}
      {Illinois Journal of Mathematics}{Illinois J. Math.}}
   \ITEE{#2}{IndagMP}{\ITE{\equal{#1}{+}}
      {Indagationes Mathematicae (Proceedings)}{Indagationes Math. Proc.}}
   \ITEE{#2}{IndianaUMJ}{\ITE{\equal{#1}{+}}
      {Indiana University Mathematical Journal}{Indiana Univ. Math. J.}}
   \ITEE{#2}{InHauEtSPM}{\ITE{\equal{#1}{+}}
      {Inst. Hautes \'{E}tudes Sci. Publ. Math.}{Inst. Hautes \'{E}tudes Sci. Publ. Math.}}
   \ITEE{#2}{IEOT}{\ITE{\equal{#1}{+}}
      {Integral Equations and Operator Theory}{Integr. Equ. Oper. Theory}}
   \ITEE{#2}{IsraelJM}{\ITE{\equal{#1}{+}}
      {Israel Journal of Mathematics}{Israel J. Math.}}
   \ITEE{#2}{JAusMSA}{\ITE{\equal{#1}{+}}
      {Journal of the Australian Mathematical Society. Series A}{J. Aust. Math. Soc. Ser. A}}
   \ITEE{#2}{JCA}{\ITE{\equal{#1}{+}}
      {Journal of Convex Analysis}{J. Convex Anal.}}
   \ITEE{#2}{JChinUST}{\ITE{\equal{#1}{+}}
      {J. China Univ. Sci. Tech.}{J. China Univ. Sci. Tech.}}
   \ITEE{#2}{JFA}{\ITE{\equal{#1}{+}}
      {Journal of Functional Analysis}{J. Funct. Anal.}}
   \ITEE{#2}{JKoreanMS}{\ITE{\equal{#1}{+}}
      {Journal of the Korean Mathematical Society}{J. Korean Math. Soc.}}
   \ITEE{#2}{JMAnApp}{\ITE{\equal{#1}{+}}
      {J. Math. Anal. Appl.}{J. Math. Anal. Appl.}}
   \ITEE{#2}{JOT}{\ITE{\equal{#1}{+}}
      {Journal of Operator Theory}{J. Operator Theory}}
   \ITEE{#2}{KodaiMSemRep}{\ITE{\equal{#1}{+}}
      {Kodai Math. Sem. Rep.}{Kodai Math. Sem. Rep.}}
   \ITEE{#2}{LAA}{\ITE{\equal{#1}{+}}
      {Linear Algebra and its Applications}{Linear Algebra Appl.}}
   \ITEE{#2}{LMLA}{\ITE{\equal{#1}{+}}
      {Linear and Multilinear Algebra}{Linear Multilinear Algebra}}
   \ITEE{#2}{LNM}{\ITE{\equal{#1}{+}}
      {Lecture Notes in Mathematics}{Lecture Notes Math.}}
   \ITEE{#2}{MathJap}{\ITE{\equal{#1}{+}}
      {Math. Japon.}{Math. Japon.}}
   \ITEE{#2}{MLQ}{\ITE{\equal{#1}{+}}
      {Mathematical Logic Quarterly}{Math. Log. Q.}}
   \ITEE{#2}{MProcCambPhS}{\ITE{\equal{#1}{+}}
      {Mathematical Proceedings of the Cambridge Philosophical Society}{Math. Proc. Cambridge Phil. Soc.}}
   \ITEE{#2}{MMag}{\ITE{\equal{#1}{+}}
      {Mathematics Magazine}{Math. Mag.}}
   \ITEE{#2}{MSb}{\ITE{\equal{#1}{+}}
      {Matematicheski\u{\i} Sbornik}{Mat. Sb.}}
   \ITEE{#2}{MStud}{\ITE{\equal{#1}{+}}
      {Matematychni Studi\"{\i}}{Mat. Stud.}}
   \ITEE{#2}{MScand}{\ITE{\equal{#1}{+}}
      {Mathematica Scandinavica}{Math. Scand.}}
   \ITEE{#2}{MAnn}{\ITE{\equal{#1}{+}}
      {Mathematische Annalen}{Math. Ann.}}
   \ITEE{#2}{MAMS}{\ITE{\equal{#1}{+}}
      {Memoirs of the American Mathematical Society}{Mem. Amer. Math. Soc.}}
   \ITEE{#2}{MichMJ}{\ITE{\equal{#1}{+}}
      {Michigan Mathematical Journal}{Mich. Math. J.}}
   \ITEE{#2}{MonatM}{\ITE{\equal{#1}{+}}
      {Monatshefte f\"{u}r Mathematik}{Mh. Math.}}
   \ITEE{#2}{NonlinA}{\ITE{\equal{#1}{+}}
      {Nonlinear Analysis: Theory, Methods \& Applications}{Nonlinear Anal.}}
   \ITEE{#2}{NAMS}{\ITE{\equal{#1}{+}}
      {Notices of the American Mathematical Society}{Notices Amer. Math. Soc.}}
   \ITEE{#2}{OpusM}{\ITE{\equal{#1}{+}}
      {Opuscula Mathematica}{Opuscula Math.}}
   \ITEE{#2}{PacJM}{\ITE{\equal{#1}{+}}
      {Pacific Journal of Mathematics}{Pacific J. Math.}}
   \ITEE{#2}{PeriodMHung}{\ITE{\equal{#1}{+}}
      {Periodica Mathematica Hungarica}{Period. Math. Hungarica}}
   \ITEE{#2}{PAMS}{\ITE{\equal{#1}{+}}
      {Proceedings of the American Mathematical Society}{Proc. Amer. Math. Soc.}}
   \ITEE{#2}{ProcCambPhS}{\ITE{\equal{#1}{+}}
      {Proceedings of the Cambridge Philosophical Society}{Proc. Cambridge Phil. Soc.}}
   \ITEE{#2}{ProcImpAcadTokyo}{\ITE{\equal{#1}{+}}
      {Proc. Imp. Acad. Tokyo}{Proc. Imp. Acad. Tokyo}}
   \ITEE{#2}{ProcKonink}{\ITE{\equal{#1}{+}}
      {Proceedings of the Koninklijke Nederlandse Akademie van Wetenschappen}{Nederl. Akad. Wetensch. Proc. Ser. A}}
   \ITEE{#2}{PLondMS}{\ITE{\equal{#1}{+}}
      {Proceedings of the London Mathematical Society}{Proc. London Math. Soc.}}
   \ITEE{#2}{PNatlUSA}{\ITE{\equal{#1}{+}}
      {Proceedings of the National Academy of Sciences of the United States of America}{Proc. Natl. Acad. Sci. USA}}
   \ITEE{#2}{PublRIMSKyoto}{\ITE{\equal{#1}{+}}
      {Publ. Res. Inst. Math. Sci. Kyoto Univ.}{Publ. Res. Inst. Math. Sci.}}
   \ITEE{#2}{PWN}{\ITE{\equal{#1}{+}}
      {PWN -- Polish Scientific Publishers, Warszawa}{PWN -- Polish Scientific Publishers, Warszawa}}
   \ITEE{#2}{RCMP}{\ITE{\equal{#1}{+}}
      {Rendiconti del Circolo Matematico di Palermo}{Rend. Circ. Mat. Palermo}}
   \ITEE{#2}{RussMS}{\ITE{\equal{#1}{+}}
      {Russian Mathematical Surveys}{Russian Math. Surveys}}
   \ITEE{#2}{SbM}{\ITE{\equal{#1}{+}}
      {Sbornik: Mathematics}{Sb. Math.}}
   \ITEE{#2}{SciRepTokyoA}{\ITE{\equal{#1}{+}}
      {Science Reports of Tokyo Kyoiku Daigaku, Section A}{Sci. Rep. Tokyo Kyoiku Daigaku Sect. A}}
   \ITEE{#2}{SeminProbStras}{\ITE{\equal{#1}{+}}
      {S\'{e}minaire de probabilit\'{e}s de Strasbourg}{S\'{e}min. Prob. Strasbourg}}
   \ITEE{#2}{SIAMJMAA}{\ITE{\equal{#1}{+}}
      {SIAM Journal on Matrix Analysis and Applications}{SIAM J. Matrix Anal. Appl.}}
   \ITEE{#2}{SibirMZ}{\ITE{\equal{#1}{+}}
      {Sibirski\v{\i} Mat. \v{Z}hurnal}{Sibirsk. Mat. \v{Z}.}}
   \ITEE{#2}{SM}{\ITE{\equal{#1}{+}}
      {Studia Mathematica}{Studia Math.}}
   \ITEE{#2}{TAMS}{\ITE{\equal{#1}{+}}
      {Transactions of the American Mathematical Society}{Trans. Amer. Math. Soc.}}
   \ITEE{#2}{TohokuMJ}{\ITE{\equal{#1}{+}}
      {T\^{o}hoku Mathematical Journal}{T\^{o}hoku Math. J.}}
   \ITEE{#2}{TomskUnivRev}{\ITE{\equal{#1}{+}}
      {Tomsk Universitet Review}{Tomsk. Univ. Rev.}}
   \ITEE{#2}{TopA}{\ITE{\equal{#1}{+}}
      {Topology and its Applications}{Topology Appl.}}
   \ITEE{#2}{TopMethNA}{\ITE{\equal{#1}{+}}
      {Topological Methods in Nonlinear Analysis}{Topol. Methods Nonlinear Anal.}}
   \ITEE{#2}{TsukubaJM}{\ITE{\equal{#1}{+}}
      {Tsukuba Journal of Mathematics}{Tsukuba J. Math.}}
   \ITEE{#2}{UspekhiMN}{\ITE{\equal{#1}{+}}
      {Uspekhi Matem. Nauk}{Uspekhi Mat. Nauk}}
   }
\newcommand{\paplist}[3][]{
   \ITEE{#3}{NIAkhiezer,IMGlazman1993}{
      \BIb{#2}{N.I. Akhiezer and I.M. Glazman}
         {Theory of Linear Operators in Hilbert Space}
         {Dover Publications, Inc., New York}{1993}{#1}}
   \ITEE{#3}{RDAnderson1967}{
      \BIB{#2}{R.D. Anderson}
         {On topological infinite deficiency}
         {\jRN{MichMJ}}{14}{1967}{365--383}{#1}}
   \ITEE{#3}{RDAnderson,JMcCharen1970}{
      \BIB{#2}{R.D. Anderson and J. McCharen}
         {On extending homeomorphisms to Fr\'{e}chet manifolds}
         {\jRN{PAMS}}{25}{1970}{283--289}{#1}}
   \ITEE{#3}{RDAnderson,DWCurtis,JVanMill1982}{
      \BIB{#2}{R.D. Anderson, D.W. Curtis, J. van Mill}
         {A fake topological Hilbert space}
         {\jRN{TAMS}}{272}{1982}{311--321}{#1}}
   \ITEE{#3}{RArens,JEells1956}{
      \BIB{#2}{R. Arens and J. Eells}
         {On embedding uniform and topological spaces}
         {\jRN{PacJM}}{6}{1956}{397--403}{#1}}
   \ITEE{#3}{NAronszajn,PPanitchpakdi1956}{
      \BIB{#2}{N. Aronszajn and P. Panitchpakdi}
         {Extension of uniformly continuous transformations and hyperconvex metric spaces}
         {\jRN{PacJM}}{6}{1956}{405--439}{#1}}
   \ITEE{#3}{KJBabenko1948}{
      \BIB{#2}{K.J. Babenko}
         {On conjugate functions}
         {\jRN{DANSSSR}}{62}{1948}{157--160}{#1}}
   \ITEE{#3}{TBanakh1995}{
      \BIB{#2}{T.O. Banakh}
         {Topology of spaces of probability measures, I}
         {\jRN{MStud}}{5}{1995}{65--87 (Russian)}{#1}}
   \ITEE{#3}{TBanakh1995a}{
      \BIB{#2}{T.O. Banakh}
         {Topology of spaces of probability measures, II}
         {\jRN{MStud}}{5}{1995}{88--106 (Russian)}{#1}}
   \ITEE{#3}{TBanakh1998}{
      \BIB{#2}{T. Banakh}
         {Characterization of spaces admitting a homotopy dense embedding into a Hilbert manifold}
         {\jRN{TopA}}{86}{1998}{123--131}{#1}}
   \ITEE{#3}{TBanakh,TNRadul1997}{
      \BIB{#2}{T.O. Banakh and T.N. Radul}
         {Topology of spaces of probability measures}
         {\jRN{SbM}}{188}{1997}{973--995}{#1}}
   \ITEE{#3}{TBanakh,TRadul,MZarichnyi1996}{
      \BIb{#2}{T. Banakh, T. Radul, M. Zarichnyi}
         {Absorbing sets in infinite-dimensional manifolds}
         {VNTL Publishers, Lviv}{1996}{#1}}
   \ITEE{#3}{TBanakh,IZarichnyy2008}{
      \BIB{#2}{T. Banakh and I. Zarichnyy}
         {Topological groups and convex sets homeomorphic to non-separable Hilbert spaces}
         {\jRN{CEurJM}}{6}{2008}{77--86}{#1}}
   \ITEE{#3}{HBecker,ASKechris1996}{
      \BIb{#2}{H. Becker and A.S. Kechris}
         {The Descriptive Set Theory of Polish Group Actions \textup{(London Math. Soc. Lecture Note Series, vol. 232)}}
         {University Press, Cambridge}{1996}{#1}}
   \ITEE{#3}{GBeer1993}{
      \BIb{#2}{G. Beer}
         {Topologies on Closed and Closed Convex Sets \textup{(Mathematics and Its Applications)}}
         {Kluwer Academic Publishers, Dordrecht}{1993}{#1}}
   \ITEE{#3}{NEBenamara,NNikolski1999}{
      \BIB{#2}{N.E. Benamara and N. Nikolski}
         {Resolvent tests for similarity to a normal operator}
         {\jRN{PLondMS}}{78}{1999}{585--626}{#1}}
   \ITEE{#3}{YBenyamini,JLindenstrauss2000}{
      \BIb{#2}{Y. Benyamini and J. Lindenstrauss}
         {Geometric nonlinear functional analysis I}
         {AMS Colloquium Publications 48}{2000}{#1}}
   \ITEE{#3}{SKBerberian1974}{
      \BIb{#2}{S.K. Berberian}
         {Lectures in Functional Analysis and Operator Theory}
         {Graduate Texts in Mathematics 15, Springer-Verlag, New York}{1974}{#1}}
   \ITEE{#3}{SNBernstein1954}{
      \BIb{#2}{S.N. Bernstein}
         {Collected Works II}
         {Akad. Nauk SSSR, Moscow}{1954 (Russian)}{#1}}
   \ITEE{#3}{CzBessaga,APelczynski1972}{
      \BIB{#2}{Cz. Bessaga and A. Pe\l{}czy\'{n}ski}
         {On spaces of measurable functions}
         {\jRN{SM}}{44}{1972}{597--615}{#1}}
   \ITEE{#3}{CzBessaga,APelczynski1975}{
      \BIb{#2}{Cz. Bessaga and A. Pe\l{}czy\'{n}ski}
         {Selected topics in infinite-dimensional topology}
         {\jRN{PWN}}{1975}{#1}}
   \ITEE{#3}{MBestvina,JMogilski1986}{
      \BIB{#2}{M. Bestvina and J. Mogilski}
         {Characterizing certain incomplete infinite-dimensional absolute retracts}
         {\jRN{MichMJ}}{33}{1986}{291--313}{#1}}
   \ITEE{#3}{MBestvina,PBowers,JMogilsky,JWalsh1986}{
      \BIB{#2}{M. Bestvina, P. Bowers, J. Mogilsky, J. Walsh}
         {Characterization of Hilbert space manifolds revisited}
         {\jRN{TopA}}{24}{1986}{53--69}{#1}}
   \ITEE{#3}{RBhatia1997}{
      \BIb{#2}{R. Bhatia}
         {Matrix Analysis}
         {Springer, New York}{1997}{#1}}
   \ITEE{#3}{GBirkhoff1936}{
      \BIB{#2}{G. Birkhoff}
         {A note on topological groups}
         {\jRN{ComposM}}{3}{1936}{427--430}{#1}}
   \ITEE{#3}{MSBirman,MZSolomjak1987}{
      \BIb{#2}{M.S. Birman and M.Z. Solomjak}
         {Spectral Theory of Self-Adjoint Operators in Hilbert Space}
         {D. Reidel Publishing Co., Dordrecht}{1987}{#1}}
   \ITEE{#3}{EBishop1961}{
      \BIB{#2}{E. Bishop}
         {A generalization of the Stone-Weierstrass theorem}
         {\jRN{PacJM}}{11}{1961}{777--783}{#1}}
   \ITEE{#3}{BBlackadar2006}{
      \BIb{#2}{B. Blackadar}{Operator Algebras. Theory of $\CCc^*$-algebras and von Neumann algebras 
         \textup{(Encyclopaedia of Mathematical Sciences, vol. 122: Operator Algebras and Non-Commutative Geometry III)}}
         {Springer-Verlag, Berlin-Heidelberg}{2006}{#1}}
   \ITEE{#3}{JBlass,WHolsztynski1972}{
      \BIB{#2}{J. Blass and W. Holszty\'{n}ski}
         {Cubical polyhedra and homotopy III}
         {\jRN{AttiAccLincRendNat}}{53}{1972}{275--279}{#1}}
   \ITEE{#3}{FFBonsall,NJDuncan1973}{
      \BIb{#2}{F.F. Bonsall and N.J. Duncan}
         {Complete Normed Algebras}
         {Springer Verlag, Berlin}{1973}{#1}}
   \ITEE{#3}{NBourbaki2002}{
      \BIb{#2}{N. Bourbaki}
         {Lie Groups and Lie Algebras, Chapters 4--6}
         {Springer, New York}{2002}{#1}}
   \ITEE{#3}{PLBowers1989}{
      \BIB{#2}{P.L. Bowers}
         {Limitation topologies on function spaces}
         {\jRN{TAMS}}{314}{1989}{421--431}{#1}}
   \ITEE{#3}{ABrown1953}{
      \BIB{#2}{A. Brown}
         {On a class of operators}
         {\jRN{PAMS}}{4}{1953}{723--728}{#1}}
   \ITEE{#3}{ABrown,CKFong,DWHadwin1978}{
      \BIB{#2}{A. Brown, C.-K. Fong, D.W. Hadwin}
         {Parts of operators on Hilbert space}
         {\jRN{IllinoisJM}}{22}{1978}{306--314}{#1}}
   \ITEE{#3}{AMBruckner,JBBruckner,BSThomson1997}{
      \BIb{#2}{A.M. Bruckner, J.B. Bruckner, B.S. Thomson}
         {Real Analysis}
         {Prentice-Hall, New Jersey}{1997}{#1}}
   \ITEE{#3}{PJCameron,AMVershik2006}{
      \BIB{#2}{P.J. Cameron and A.M. Vershik}
         {Some isometry groups of Urysohn space}
         {\jRN{AnnPALog}}{143}{2006}{70--78}{#1}}
   \ITEE{#3}{CCastaing1966}{
      \BIB{#2}{C. Castaing}
         {Quelques probl\`{e}mes de mesurabilit\'{e} li\'{e}es \`{a} la th\'{e}orie de la commande}
         {\jRN{CRParis}}{262}{1966}{409--411}{#1}}
   \ITEE{#3}{JAVanCasteren1980}{
      \BIB{#2}{J.A. van Casteren}
         {A problem of Sz.-Nagy}
         {\jRN{ActaSM}}{42}{1980}{189--194}{#1}}
   \ITEE{#3}{JAVanCasteren1983}{
      \BIB{#2}{J.A. van Casteren}
         {Operators similar to unitary or selfadjoint ones}
         {\jRN{PacJM}}{104}{1983}{241--255}{#1}}
   \ITEE{#3}{XCatepillan,MPtak,WSzymanski1994}{
      \BIB{#2}{X. Catepill\'{a}n, M. Ptak, W. Szyma\'{n}ski}
         {Multiple canonical decompositions of families of operators and a model of quasinormal families}
         {\jRN{PAMS}}{121}{1994}{1165--1172}{#1}}
   \ITEE{#3}{RCauty1994}{
      \BIB{#2}{R. Cauty}
         {Un espace m\'{e}trique lin\'{e}aire qui n'est pas un r\'{e}tracte absolu}
         {\jRN{FM}}{146}{1994}{85--99, (French)}{#1}}
   \ITEE{#3}{TAChapman1971}{
      \BIB{#2}{T.A. Chapman}
         {Deficiency in infinite-dimensional manifolds}
         {\jRN{GTopA}}{1}{1971}{263--272}{#1}}
   \ITEE{#3}{TAChapman1976}{
      \BIb{#2}{T.A. Chapman}
         {Lectures on Hilbert cube manifolds}
         {C.B.M.S. Regional Conference Series in Math. No 28, Amer. Math. Soc.}{1976}{#1}}
   \ITEE{#3}{RBChuaqui1977}{
      \BIB{#2}{R.B. Chuaqui}
         {Measures invariant under a group of transformations}
         {\jRN{PacJM}}{68}{1977}{313--329}{#1}}
   \ITEE{#3}{JBConway1985}{
      \BIb{#2}{J.B. Conway}
         {A Course in Functional Analysis}
         {Springer-Verlag, New York}{1985}{#1}}
   \ITEE{#3}{JBConway2000}{
      \BIb{#2}{J.B. Conway}
         {A Course in Operator Theory}
         {(Graduate Studies in Mathematics, vol. 21) Amer. Math. Soc., Providence}{2000}{#1}}
   \ITEE{#3}{GCorach,AMaestripieri,MMbekhta2009}{
      \BIB{#2}{G. Corach, A. Maestripieri, M. Mbekhta}
         {Metric and homogeneous structure of closed range operators}
         {\jRN{JOT}}{61}{2009}{171--190}{#1}}
   \ITEE{#3}{MJCowen,RGDouglas1978}{
      \BIB{#2}{M.J. Cowen and R.G. Douglas}
         {Complex geometry and operator theory}
         {\jRN{ActaM}}{141}{1978}{187--261}{#1}}
   \ITEE{#3}{DWCurtis1985}{
      \BIB{#2}{D.W. Curtis}
         {Boundary sets in the Hilbert cube}
         {\jRN{TopA}}{20}{1985}{201--221}{#1}}
   \ITEE{#3}{MMDay1958}{
      \BIb{#2}{M.M. Day}
         {Normed Linear Spaces}
         {Springer Verlag, Berlin}{1958}{#1}}
   \ITEE{#3}{CDellacherie1967}{
      \BIB{#2}{C. Dellacherie}
         {Un compl\'{e}ment au th\'{e}or\`{e}me de Weierstrass-Stone}
         {\jRN{SeminProbStras}}{1}{1967}{52--53}{#1}}
   \ITEE{#3}{JJDijkstra1987}{
      \BIB{#2}{J.J. Dijkstra}
         {Strong negligibility of $\sigma$-compacta does not characterize Hilbert space}
         {\jRN{PacJM}}{127}{1987}{19--30}{#1}}
   \ITEE{#3}{JJDijkstra1990}{
      \BIB{#2}{J.J. Dijkstra}
         {Characterizing Hilbert space topology in terms of strong negligibility}
         {\jRN{ComposM}}{75}{1990}{299--306}{#1}}
   \ITEE{#3}{TDobrowolski,WMarciszewski2002}{
      \BIB{#2}{T. Dobrowolski and W. Marciszewski}
         {Failure of the Factor Theorem for Borel pre-Hilbert spaces}
         {\jRN{FM}}{175}{2002}{53--68}{#1}}
   \ITEE{#3}{TDobrowolski,JMogilski1990}{
      \BiB{#2}{T. Dobrowolski and J. Mogilski}
         {Problems on Topological Classification of Incomplete Metric Spaces}{Chapter 25 in:}
         {Open Problems in Topology}{J. van Mill and G.M. Reed (eds.), North-Holland Amsterdam}{1990}{411--429}{#1}}
   \ITEE{#3}{TDobrowolski,HTorunczyk1981}{
      \BIB{#2}{T. Dobrowolski and H. Toru\'{n}czyk}
         {Separable complete ANR's admitting a group structure are Hilbert manifolds}
         {\jRN{TopA}}{12}{1981}{229--235}{#1}}
   \ITEE{#3}{RGDouglas1966}{
      \BIB{#2}{R.G. Douglas}
         {On majorization, factorization and range inclusion of operators in Hilbert space}
         {\jRN{PAMS}}{17}{1966}{413--416}{#1}}
   \ITEE{#3}{CHDowker1947}{
      \BIB{#2}{C.H. Dowker}
         {Mapping theorems for non-compact spaces}
         {\jRN{AmJM}}{69}{1947}{200--242}{#1}}
   \ITEE{#3}{CHDowker1952}{
      \BIB{#2}{C.H. Dowker}
         {Topology of metric complexes}
         {\jRN{AmJM}}{74}{1952}{555--577}{#1}}
   \ITEE{#3}{JDugundji1951}{
      \BIB{#2}{J. Dugundji}
         {An extension of Tietze's theorem}
         {\jRN{PacJM}}{1}{1951}{353--367}{#1}}
   \ITEE{#3}{JDugundji1958}{
      \BIB{#2}{J. Dugundji}
         {Absolute neighborhood retracts and local connectedness for arbitrary metric spaces}
         {\jRN{ComposM}}{13}{1958}{229--246}{#1}}
   \ITEE{#3}{JDugundji1965}{
      \BIB{#2}{J. Dugundji}
         {Locally equiconnected spaces and absolute neighborhood retracts}
         {\jRN{FM}}{57}{1965}{187--193}{#1}}
   \ITEE{#3}{NDunford,JTSchwartz1958}{
      \BIb{#2}{N. Dunford and J.T. Schwartz}
         {Linear Operators, part I}
         {Interscience Publishers, New York}{1958}{#1}}
   \ITEE{#3}{NDunford,JTSchwartz1963}{
      \BIb{#2}{N. Dunford and J.T. Schwartz}
         {Linear Operators, part II}
         {Interscience Publishers, New York}{1963}{#1}}
   \ITEE{#3}{NDunford,JTSchwartz1971}{
      \BIb{#2}{N. Dunford and J.T. Schwartz}
         {Linear Operators, part III}
         {Wiley-Interscience, New York}{1971}{#1}}
   \ITEE{#3}{MLEaton,MDPerlman1977}{
      \BIB{#2}{M.L. Eaton and M.D. Perlman}
         {Reflection groups, generalized Schur functions and the geometry of majorization}
         {\jRN{AnnProb}}{5}{1977}{829--860}{#1}}
   \ITEE{#3}{EGEffros1965}{
      \BIB{#2}{E.G. Effros}
         {The Borel space of von Neumann algebras on a separable Hilbert space}
         {\jRN{PacJM}}{15}{1965}{1153--1164}{#1}}
   \ITEE{#3}{EGEffros1966}{
      \BIB{#2}{E.G. Effros}
         {Global structure in von Neumann algebras}
         {\jRN{TAMS}}{121}{1966}{434--454}{#1}}
   \ITEE{#3}{REspinola,MAKhamsi2001}{
      \BiB{#2}{R. Espinola and M.A. Khamsi}
         {Introduction to hyperconvex spaces}{Chapter XIII in:}{Handbook of Metric Fixed Point Theory}
         {W.A. Kirk and B. Sims (editors), Kluwer Academic Publishers}{2001}{391--435}{#1}}
   \ITEE{#3}{PAFillmore,JPWilliams1971}{
      \BIB{#2}{P.A. Fillmore and J.P. Williams}
         {On operator ranges}
         {\jRN{AdvM}}{7}{1971}{254--281}{#1}}
   \ITEE{#3}{JEells,NHKuiper1969}{
      \BIB{#2}{J. Eells and N.H. Kuiper}
         {Homotopy negligible subsets in infinite-dimensional manifolds}
         {\jRN{ComposM}}{21}{1969}{151--161}{#1}}
   \ITEE{#3}{REngelking1977}{
      \BIb{#2}{R. Engelking}
         {General Topology}
         {\jRN{PWN}}{1977}{#1}}
   \ITEE{#3}{REngelking1978}{
      \BIb{#2}{R. Engelking}
         {Dimension Theory}
         {\jRN{PWN}}{1978}{#1}}
   \ITEE{#3}{REngelking1989}{
      \BIb{#2}{R. Engelking}
         {General Topology. Revised and completed edition \textup{(Sigma series in pure mathematics, vol. 6)}}
         {Heldermann Verlag, Berlin}{1989}{#1}}
   \ITEE{#3}{PErdos,RDMauldin1976}{
      \BIB{#2}{P. Erd\"{o}s and R.D. Mauldin}
         {The nonexistence of certain invariant measures}
         {\jRN{PAMS}}{59}{1976}{321--322}{#1}}
   \ITEE{#3}{JErnest1976}{
      \BIB{#2}{J. Ernest}
         {Charting the operator terrain}
         {\jRN{MAMS}}{171}{1976}{207 pp}{#1}}
   \ITEE{#3}{RHFox1943}{
      \BIB{#2}{R.H. Fox}
         {On fiber spaces, II}
         {\jRN{BAMS}}{49}{1943}{733--735}{#1}}
   \ITEE{#3}{NAFriedman1970}{
      \BIb{#2}{N.A. Friedman}
         {Introduction to ergodic theory}
         {Van Nostrand Reinhold Company}{1970}{#1}}
   \ITEE{#3}{MFujii,MKajiwara,YKato,FKubo1976}{
      \BIB{#2}{M. Fujii, M. Kajiwara, Y. Kato, F. Kubo}
         {Decompositions of operators in Hilbert spaces}
         {\jRN{MathJap}}{21}{1976}{117--120}{#1}}
   \ITEE{#3}{SGao,ASKechris2003}{
      \BIB{#2}{S. Gao and A.S. Kechris}
         {On the classification of Polish metric spaces up to isometry}
         {\jRN{MAMS}}{161}{2003}{viii+78}{#1}}
   \ITEE{#3}{MIGarrido,FMontalvo1991}{
      \BIB{#2}{M.I. Garrido and F. Montalvo}
         {On some generalizations of the Kakutani-Stone and Stone-Weierstrass theorems}
         {\jRN{ExtrM}}{6}{1991}{156--159}{#1}}
   \ITEE{#3}{LGe,JShen2002}{
      \BIB{#2}{L. Ge and J. Shen}
         {Generator problem for certain property T factors}
         {\jRN{PNAS}}{99}{2002}{565--567}{#1}}
   \ITEE{#3}{IMGelfand,MANaimark1943}{
      \BIB{#2}{I.M. Gelfand and M.A. Naimark}
         {On the embedding of normed rings into the ring of operators in Hilbert space}
         {\jRN{MSb}}{12}{1943}{197--213}{#1}}
   \ITEE{#3}{FGesztesy,MMalamud,MMitrea,SNaboko2009}{
      \BIB{#2}{F. Gesztesy, M. Malamud, M. Mitrea, S. Naboko}
         {Generalized polar decompositions for closed operators in Hilbert spaces and some applications}
         {\jRN{IEOT}}{64}{2009}{83--113}{#1}}
   \ITEE{#3}{LGillman,MJerison1960}{
      \BIb{#2}{L. Gillman and M. Jerison}
         {Rings of continuous functions}
         {New York}{1960}{#1}}
   \ITEE{#3}{JGlimm1960}{
      \BIB{#2}{J. Glimm}
         {A Stone-Weierstrass theorem for $\CCc^*$-algebras}
         {\jRN{AnnM}}{72}{1960}{216--244}{#1}}
   \ITEE{#3}{GGodefroy,NJKalton2003}{
      \BIB{#2}{G. Godefroy and N.J. Kalton}
         {Lipschitz-free Banach spaces}
         {\jRN{SM}}{159}{2003}{121--141}{#1}}
   \ITEE{#3}{ICGohberg,MGKrein1967}{
      \BIB{#2}{I.C. Gohberg and M.G. Krein}
         {On a description of contraction operators similar to unitary ones}
         {\jRN{FunkAnalPril}}{1}{1967}{38--60}{#1}}
   \ITEE{#3}{ELGriffinJr1953}{
      \BIB{#2}{E.L. Griffin Jr.}
         {Some contributions to the theory of rings of operators}
         {\jRN{TAMS}}{75}{1953}{471--504}{#1}}
   \ITEE{#3}{ELGriffinJr1955}{
      \BIB{#2}{E.L. Griffin Jr.}
         {Some contributions to the theory of rings of operators II}
         {\jRN{TAMS}}{79}{1955}{389--400}{#1}}
   \ITEE{#3}{MGromov1981}{
      \BIB{#2}{M. Gromov}
         {Groups of polynomial growth and expanding maps}
         {\jRN{InHauEtSPM}}{53}{1981}{53--73}{#1}}
   \ITEE{#3}{MGromov1999}{
      \BIb{#2}{M. Gromov}
         {Metric Structures for Riemannian and Non-Riemannian Spaces}
         {Progress in Math. \textbf{152}, Birkh\"{a}user}{1999}{#1}}
   \ITEE{#3}{JDeGroot1956}{
      \BIB{#2}{J. de Groot}
         {Non-archimedean metrics in topology}
         {\jRN{PAMS}}{7}{1956}{948--953}{#1}}
   \ITEE{#3}{LCGrove,CTBenson1985}{
      \BIb{#2}{L.C. Grove and C.T. Benson}
         {Finite Reflection Group}
         {2nd ed., Springer-Verlag}{1985}{#1}}
   \ITEE{#3}{VIGurarii1966}{
      \BIB{#2}{V.I. Gurari\v{\i}}{Spaces of universal placement, isotropic spaces and a problem of Mazur 
         on rotations of Banach spaces \textup{(Russian)}}
         {\jRN{SibirMZ}}{7}{1966}{1002--1013}{#1}}
   \ITEE{#3}{DWHadwin1976}{
      \BIB{#2}{D.W. Hadwin}
         {An operator-valued spectrum}
         {\jRN{NAMS}}{23}{1976}{A-163}{#1}}
   \ITEE{#3}{DWHadwin1977}{
      \BIB{#2}{D.W. Hadwin}
         {An operator-valued spectrum}
         {\jRN{IndianaUMJ}}{26}{1977}{329--340}{#1}}
   \ITEE{#3}{HHahn1932}{
      \BIb{#2}{H. Hahn}
         {Reelle Funktionen I}
         {Leipzig}{1932}{#1}}
   \ITEE{#3}{PRHalmos1950}{
      \BIb{#2}{P.R. Halmos}
         {Measure theory}
         {Van Nostrand, New York}{1950}{#1}}
   \ITEE{#3}{PRHalmos1951}{
      \BIb{#2}{P.R. Halmos}
         {Introduction to Hilbert Space and the Theory of Spectral Multiplicity}
         {Chelsea Publishing Company, New York}{1951}{#1}}
   \ITEE{#3}{PRHalmos1956}{
      \BIb{#2}{P.R. Halmos}
         {Lectures on Ergodic Theory}
         {Publ. Math. Soc. Japan, Tokyo}{1956}{#1}}
   \ITEE{#3}{PRHalmos1982}{
      \BIb{#2}{P.R. Halmos}
         {A Hilbert Space Problem Book}
         {Springer-Verlag New York Inc.}{1982}{#1}}
  \ITEE{#3}{PRHalmos,JEMcLaughlin1963}{
      \BIB{#2}{P.R. Halmos and J.E. McLaughlin}
         {Partial isometries}
         {\jRN{PacJM}}{13}{1963}{585--596}{#1}}
   \ITEE{#3}{RWHansell1972}{
      \BIB{#2}{R.W. Hansell}
         {On the nonseparable theory of Borel and Souslin sets}
         {\jRN{BAMS}}{78}{1972}{236--241}{#1}}
   \ITEE{#3}{FHausdorff1930}{
      \BIB{#2}{F. Hausdorff}
         {Erweiterung einer Hom\"{o}omorphie}
         {\jRN{FM}}{16}{1930}{353--360}{#1}}
   \ITEE{#3}{FHausdorff1934}{
      \BIB{#2}{F. Hausdorff}
         {\"{U}ber innere Abbildungen}
         {\jRN{FM}}{23}{1934}{279--291}{#1}}
   \ITEE{#3}{FHausdorff1938}{
      \BIB{#2}{F. Hausdorff}
         {Erweiterung einer stetigen Abbildung}
         {\jRN{FM}}{30}{1938}{40--47}{#1}}
   \ITEE{#3}{DWHenderson1971}{
      \BIB{#2}{D.W. Henderson}
         {Corrections and extensions of two papers about infinite-dimensional manifolds}
         {\jRN{GTopA}}{1}{1971}{321--327}{#1}}
   \ITEE{#3}{DWHenderson1975}{
      \BIB{#2}{D.W. Henderson}
         {$Z$-sets in ANR's}
         {\jRN{TAMS}}{213}{1975}{205--216}{#1}}
   \ITEE{#3}{DWHenderson,RMSchori1970}{
      \BIB{#2}{D.W. Henderson and R.M. Schori}
         {Topological classification of infinite-dimensional manifolds by homotopy type}
         {\jRN{BAMS}}{76}{1970}{121--124}{#1}}
   \ITEE{#3}{DWHenderson,JEWest1970}{
      \BIB{#2}{D.W. Henderson and J.E. West}
         {Triangulated infinite-dimensional manifolds}
         {\jRN{BAMS}}{76}{1970}{655--660}{#1}}
   \ITEE{#3}{BHoffmann1979}{
      \BIB{#2}{B. Hoffmann}
         {A compact contractible topological group is trivial}
         {\jRN{ArchM}}{32}{1979}{585--587}{#1}}
   \ITEE{#3}{DHofmann2002}{
      \BIB{#2}{D. Hofmann}
         {On a generalization of the Stone-Weierstrass theorem}
         {\jRN{ACS}}{10}{2002}{569--592}{#1}}
   \ITEE{#3}{GHognas,AMukherjea1995}{
      \BIb{#2}{G. H\"ogn\"as and A. Mukherjea}
         {Probability Measures on Semigroups. Convolution Products, Random Walks, and Random Matrices}
         {Plenum Press, New York}{1995}{#1}}
   \ITEE{#3}{MRHolmes1992}{
      \BIB{#2}{M.R. Holmes}
         {The universal separable metric space of Urysohn and isometric embeddings thereof in Banach spaces}
         {\jRN{FM}}{140}{1992}{199--223}{#1}}
   \ITEE{#3}{MRHolmes2008}{
      \BIB{#2}{M.R. Holmes}
         {The Urysohn space embeds in Banach spaces in just one way}
         {\jRN{TopA}}{155}{2008}{1479--1482}{#1}}
   \ITEE{#3}{RRHolmes,TYTam1999}{
      \BIB{#2}{R.R. Holmes and T.Y. Tam}
         {Distance to the convex hull of an orbit under the action of a compact group}
         {\jRN{JAusMSA}}{66}{1999}{331--357}{#1}}
   \ITEE{#3}{RHorn,RMathias1990}{
      \BIB{#2}{R. Horn and R. Mathias}
         {Cauchy-Schwartz inequalities associated with positive semidefinite matrices}
         {\jRN{LAA}}{142}{1990}{63--82}{#1}}
   \ITEE{#3}{GEHuhunaisvili1955}{
      \BIB{#2}{G.E. Huhunai\v{s}vili}
         {On a property of Urysohn's universal metric space}
         {\jRN{DANSSSR}}{101}{1955}{607--610 (Russian)}{#1}}
   \ITEE{#3}{JEHumphreys1990}{
      \BIb{#2}{J.E. Humphreys}
         {Reflection Groups and Coxeter Groups}
         {Cambridge University Press}{1990}{#1}}
   \ITEE{#3}{JRIsbell1964}{
      \BIB{#2}{J.R. Isbell}
         {Six theorems about injective metric spaces}
         {\jRN{CMHelv}}{39}{1964}{65--76}{#1}}
   \ITEE{#3}{SIzumino,YKato1985}{
      \BIB{#2}{S. Izumino and Y. Kato}
         {The closure of invertible operators on Hilbert space}
         {\jRN{ActaSM}}{49}{1985}{321--327}{#1}}
   \ITEE{#3}{CJiang2004}{
      \BIB{#2}{C. Jiang}
         {Similarity classification of Cowen-Douglas operators}
         {\jRN{CanadJM}}{56}{2004}{742--775}{#1}}
   \ITEE{#3}{WBJohnson,JLindenstrauss2001}{
      \BiB{#2}{W.B. Johnson and J. Lindenstrauss}{Basic Concepts in the Geometry of Banach Spaces}
         {Chapter 1 in:}{Handbook of the Geometry of Banach Spaces, Vol. 1}
         {W.B. Johnson and J. Lindenstrauss (editors), Elsevier Science B.V., Amsterdam}{2001}{1--84}{#1}}
   \ITEE{#3}{IBJung,JStochel2008}{
      \BIB{#2}{I.B. Jung and J. Stochel}
         {Subnormal operators whose adjoints have rich point spectrum}
         {\jRN{JFA}}{255}{2008}{1797--1816}{#1}}
   \ITEE{#3}{RVKadison,JRRingrose1983}{
      \BIb{#2}{R.V. Kadison and J.R. Ringrose}
         {Fundamentals of the Theory of Operator Algebras. Volume I: Elementary Theory}
         {Academic Press, Inc., New York-London}{1983}{#1}}
   \ITEE{#3}{RVKadison,JRRingrose1986}{
      \BIb{#2}{R.V. Kadison and J.R. Ringrose}
         {Fundamentals of the Theory of Operator Algebras. Volume II: Advanced Theory}
         {Academic Press, Inc., Orlando-London}{1986}{#1}}
   \ITEE{#3}{SKakutani1936}{
      \BIB{#2}{S. Kakutani}
         {\"{U}ber die Metrisation der topologischen Gruppen}
         {\jRN{ProcImpAcadTokyo}}{12}{1936}{82--84}{#1}}
   \ITEE{#3}{SKakutani1938}{
      \BIB{#2}{S. Kakutani}
         {Two fixed-point theorems concerning bicompact convex sets}
         {\jRN{ProcImpAcadTokyo}}{14}{1938}{242--245}{#1}}
   \ITEE{#3}{SKakutani1941}{
      \BIB{#2}{S. Kakutani}
         {Concrete representation of abstract L-spaces}
         {\jRN{AnnM}}{42}{1941}{523--537}{#1}}
   \ITEE{#3}{SKakutani1941a}{
      \BIB{#2}{S. Kakutani}
         {Concrete representation of abstract M-spaces}
         {\jRN{AnnM}}{42}{1941}{994--1024}{#1}}
   \ITEE{#3}{NKalton2007}{
      \BIB{#2}{N. Kalton}
         {Extending Lipschitz maps into $\CCc(K)$-spaces}
         {\jRN{IsraelJM}}{162}{2007}{275--315}{#1}}
   \ITEE{#3}{RKane2001}{
      \BIb{#2}{R. Kane}
         {Reflection Groups and Invariant Theory}
         {Canadian Mathematical Society, Springer}{2001}{#1}}
   \ITEE{#3}{VKannan,SRRaju1980}{
      \BIB{#2}{V. Kannan and S.R. Raju}
         {The nonexistence of invariant universal measures on semigroups}
         {\jRN{PAMS}}{78}{1980}{482--484}{#1}}
   \ITEE{#3}{IKaplansky1951}{
      \BIB{#2}{I. Kaplansky}
         {A theorem on rings of operators}
         {\jRN{PacJM}}{1}{1951}{227--232}{#1}}
   \ITEE{#3}{MKatetov1988}{
      \BiB{#2}{M. Kat\v{e}tov}{On universal metric spaces}{in: Frolik (ed.),}
         {General Topology and its Relations to Modern Analysis and Algebra VI. Proceedings of the Sixth Prague 
         Topological Symposium 1986}{Heldermann Verlag Berlin}{1988}{323--330}{#1}}
   \ITEE{#3}{YKatznelson1960}{
      \BIB{#2}{Y. Katznelson}
         {Sur les alg\'{e}bres dont les \'{e}l\'{e}ments non n\'{e}gatifs admettent des racines carr\'{e}es}
         {\jRN{AnnSciEcNormSupT}}{77}{1960}{167--174}{#1}}
   \ITEE{#3}{OHKeller1931}{
      \BIB{#2}{O.H. Keller}
         {Die Homoiomorphie der kompakten konvexen Mengen in Hilbertschen Raum}
         {\jRN{MAnn}}{105}{1931}{748--758}{#1}}
   \ITEE{#3}{MAKhamsi,WAKirk,CMartinez2000}{
      \BIB{#2}{M.A. Khamsi, W.A. Kirk, C. Martinez}
         {Fixed point and selection theorems in hyperconvex spaces}
         {\jRN{PAMS}}{128}{2000}{3275--3283}{#1}}
   \ITEE{#3}{ABKhararazishvili1998}{
      \BIb{#2}{A.B. Khararazishvili}
         {Transformation groups and invariant measures. Set-theoretic aspects}
         {World Scientific Publishing Co., Inc., River Edge, NJ}{1998}{#1}}
   \ITEE{#3}{YKijima1987}{
      \BIB{#2}{Y. Kijima}
         {Fixed points of nonexpansive self-maps of a compact metric space}
         {\jRN{JMAnApp}}{123}{1987}{114--116}{#1}}
  \ITEE{#3}{JSKim,ChRKim,SGLee1980}{
      \BIB{#2}{J.S. Kim, Ch.R. Kim, S.G. Lee}
         {Reducing operator valued spectra of a Hilbert space operator}
         {\jRN{JKoreanMS}}{17}{1980}{123--129}{#1}}
   \ITEE{#3}{JKindler1995}{
      \BIB{#2}{J. Kindler}
         {Minimax theorems with applications to convex metric spaces}
         {\jRN{CollM}}{68}{1995}{179--186}{#1}}
   \ITEE{#3}{WAKirk1998}{
      \BIB{#2}{W.A. Kirk}
         {Hyperconvexity of $\RRR$-trees}
         {\jRN{FM}}{156}{1998}{67--72}{#1}}
   \ITEE{#3}{VLKleeJr1952}{
      \BIB{#2}{V.L. Klee Jr.}
         {Invariant metrics in groups (solution of a problem of Banach)}
         {\jRN{PAMS}}{3}{1952}{484--487}{#1}}
   \ITEE{#3}{HJKowalsky1957}{
      \BIB{#2}{H.J. Kowalsky}
         {Einbettung metrischer R\"{a}ume}
         {\jRN{ArchM}}{8}{1957}{336--339}{#1}}
   \ITEE{#3}{WKubis,MRubin2010}{
      \BIB{#2}{W. Kubi\'{s} and M. Rubin}
         {Extension and reconstruction theorems for the Urysohn universal metric space}
         {\jRN{CzMJ}}{60}{2010}{1--29}{#1}}
   \ITEE{#3}{KKuratowski1966}{
      \BIb{#2}{K. Kuratowski}
         {Topology. \textup{Vol. I}}
         {\jRN{PWN}}{1966}{#1}}
   \ITEE{#3}{KKuratowski,BKnaster1927}{
      \BIB{#2}{K. Kuratowski and B. Knaster}
         {A connected and connected im kleinen point set which contains no perfect subset}
         {\jRN{BAMS}}{33}{1927}{106--109}{#1}}
   \ITEE{#3}{KKuratowski,AMostowski1976}{
      \BIb{#2}{K. Kuratowski and A. Mostowski}
         {Set Theory with an Introduction to Descriptive Set Theory}
         {\jRN{PWN}}{1976}{#1}}
   \ITEE{#3}{GLewicki1992}{
      \BIB{#2}{G. Lewicki}
         {Bernstein's ``lethargy'' theorem in metrizable topological linear spaces}
         {\jRN{MonatM}}{113}{1992}{213--226}{#1}}
   \ITEE{#3}{ASLewis1996}{
      \BIB{#2}{A.S. Lewis}
         {Group invariance and convex matrix analysis}
         {\jRN{SIAMJMAA}}{17}{1996}{927--949}{#1}}
   \ITEE{#3}{C-KLi,N-KTsing1991}{
      \BIB{#2}{C.-K. Li and N.-K. Tsing}
         {$G$-invariant norms and $G(c)$-radii}
         {\jRN{LAA}}{150}{1991}{179--194}{#1}}
   \ITEE{#3}{AJLazar,JLindenstrauss1971}{
      \BIB{#2}{A.J. Lazar and J. Lindenstrauss}
         {Banach spaces whose duals are $L_1$ spaces and their representing matrices}
         {\jRN{ActaM}}{126}{1971}{165--193}{#1}}
   \ITEE{#3}{EHLieb,MLoss1997}{
      \BIb{#2}{E.H. Lieb and M. Loss}
         {Analysis \textup{(Graduate Studies in Mathematics, vol. 14)}}
         {Amer. Math. Soc., Providence, RI}{1997}{#1}}
   \ITEE{#3}{ALindenbaum1926}{
      \BIB{#2}{A. Lindenbaum}
         {Contributions \`{a} l'\'{e}tude de l'espace m\'{e}trique I}
         {\jRN{FM}}{8}{1926}{209--222}{#1}}
   \ITEE{#3}{DLindenstrauss,LTzafriri1971}{
      \BIB{#2}{D. Lindenstrauss and L. Tzafriri}
         {On the complemented subspaces problem}
         {\jRN{IsraelJM}}{9}{1971}{263--269}{#1}}
   \ITEE{#3}{RILoebl1986}{
      \BIB{#2}{R.I. Loebl}
         {A note on containment of operators}
         {\jRN{BAustrMS}}{33}{1986}{279--291}{#1}}
   \ITEE{#3}{LHLoomis1945}{
      \BIB{#2}{L.H. Loomis}
         {Abstract congruence and the uniqueness of Haar measure}
         {\jRN{AnnM}}{46}{1945}{348--355}{#1}}
   \ITEE{#3}{LHLoomis1949}{
      \BIB{#2}{L.H. Loomis}
         {Haar measure in uniform structures}
         {\jRN{DukeMJ}}{16}{1949}{193--208}{#1}}
   \ITEE{#3}{ERLorch1939}{
      \BIB{#2}{E.R. Lorch}
         {Bicontinuous linear transformation in certain vector spaces}
         {\jRN{BAMS}}{45}{1939}{564--569}{#1}}
   \ITEE{#3}{ATLundell,SWeingram1969}{
      \BIb{#2}{A.T. Lundell and S. Weingram}
         {The topology of CW-complexes}
         {Litton Educ. Publ.}{1969}{#1}}
   \ITEE{#3}{WLusky1976}{
      \BIB{#2}{W. Lusky}
         {The Gurarij spaces are unique}
         {\jRN{ArchM}}{27}{1976}{627--635}{#1}}
   \ITEE{#3}{WLusky1977}{
      \BIB{#2}{W. Lusky}
         {On separable Lindenstrauss spaces}
         {\jRN{JFA}}{26}{1977}{103--120}{#1}}
   \ITEE{#3}{DMaharam1942}{
      \BIB{#2}{D. Maharam}
         {On homogeneous measure algebras}
         {\jRN{PNatlUSA}}{28}{1942}{108--111}{#1}}
   \ITEE{#3}{MMalicki,SSolecki2009}{
      \BIB{#2}{M. Malicki and S. Solecki}
         {Isometry groups of separable metric spaces}
         {\jRN{MProcCambPhS}}{146}{2009}{67--81}{#1}}
   \ITEE{#3}{PMankiewicz1972}{
      \BIB{#2}{P. Mankiewicz}
         {On extension of isometries in normed linear spaces}
         {\jRN{BAPolSSSM}}{20}{1972}{367--371}{#1}}
   \ITEE{#3}{JMartinezMaurica,MTPellon1987}{
      \BIB{#2}{J. Martinez-Maurica and M.T. Pell\'{o}n}
         {Non-archimedean Chebyshev centers}
         {\jRN{IndagMP}}{90}{1987}{417--421}{#1}}
   \ITEE{#3}{KMaurin1980}{
      \BIb{#2}{K. Maurin}
         {Analysis, Part II}
         {D. Reidel, Dordrecht-Boston-London}{1980}{#1}}
   \ITEE{#3}{SMazur,SUlam1932}{
      \BIB{#2}{S. Mazur and S. Ulam}
         {Sur les transformationes isom\'{e}triques d'espaces vectoriels norm\'{e}s}
         {\jRN{CRASParis}}{194}{1932}{946--948}{#1}}
   \ITEE{#3}{SMazurkiewicz1920}{
      \BIB{#2}{S. Mazurkiewicz}
         {Sur les lignes de Jordan}
         {\jRN{FM}}{1}{1920}{166--209}{#1}}
   \ITEE{#3}{SMazurkiewicz,WSierpinski1920}{
      \BIB{#2}{S. Mazurkiewicz and W. Sierpi\'{n}ski}
         {Contributions a la topologie des ensembles denombrables}
         {\jRN{FM}}{1}{1920}{17--27}{#1}}
   \ITEE{#3}{MMbekhta1992}{
      \BIB{#2}{M. Mbekhta}
         {Sur la structure des composantes connexes semi-Fredholm de $B(H)$}
         {\jRN{PAMS}}{116}{1992}{521--524}{#1}}
   \ITEE{#3}{JEMcCarthy1996}{
      \BIB{#2}{J.E. McCarthy}
         {Boundary values and Cowen-Douglas curvature}
         {\jRN{JFA}}{137}{1996}{1--18}{#1}}
   \ITEE{#3}{JMelleray2007}{
      \BIB{#2}{J. Melleray}
         {Computing the complexity of the relation of isometry between separable Banach spaces}
         {\jRN{MLQ}}{53}{2007}{128--131}{#1}}
   \ITEE{#3}{JMelleray2007a}{
      \BIB{#2}{J. Melleray}
         {On the geometry of Urysohn's universal metric space}
         {\jRN{TopA}}{154}{2007}{384--403}{#1}}
   \ITEE{#3}{JMelleray2008}{
      \BIB{#2}{J. Melleray}
         {Some geometric and dynamical properties of the Urysohn space}
         {\jRN{TopA}}{155}{2008}{1531--1560}{#1}}
   \ITEE{#3}{JMelleray,FVPetrov,AMVershik2008}{
      \BIB{#2}{J. Melleray, F.V. Petrov, A.M. Vershik}
         {Linearly rigid metric spaces and the embedding problem}
         {\jRN{FM}}{199}{2008}{177--194}{#1}}
   \ITEE{#3}{EMichael1953}{
      \BIB{#2}{E. Michael}
         {Some extension theorems for continuous functions}
         {\jRN{PacJM}}{3}{1953}{789--806}{#1}}
   \ITEE{#3}{EMichael1954}{
      \BIB{#2}{E. Michael}
         {Local properties of topological spaces}
         {\jRN{DukeMJ}}{21}{1954}{163--171}{#1}}
   \ITEE{#3}{EMichael1956}{
      \BIB{#2}{E. Michael}
         {Selected selection theorems}
         {\jRN{AmMMon}}{58}{1956}{233--238}{#1}}
   \ITEE{#3}{EMichael1956a}{
      \BIB{#2}{E. Michael}
         {Continuous selections. I}
         {\jRN{AnnM}}{63}{1956}{361--382}{#1}}
   \ITEE{#3}{EMichael1956b}{
      \BIB{#2}{E. Michael}
         {Continuous selections. II}
         {\jRN{AnnM}}{64}{1956}{562--580}{#1}}
   \ITEE{#3}{EMichael1959}{
      \BIB{#2}{E. Michael}
         {A theorem on semi-continuous set-valued functions}
         {\jRN{DukeMJ}}{26}{1959}{647--652}{#1}}
   \ITEE{#3}{JVanMill1986}{
      \BIB{#2}{J. van Mill}
         {Another counterexample in ANR theory}
         {\jRN{PAMS}}{97}{1986}{136--138}{#1}}
   \ITEE{#3}{JVanMill2001}{
      \BIb{#2}{J. van Mill}
         {The Infinite-Dimensional Topology of Function Spaces 
         \textup{(North-Holland Mathematical Library, vol. 64)}}
         {Elsevier, Amsterdam}{2001}{#1}}
   \ITEE{#3}{WMlak1991}{
      \BIb{#2}{W. Mlak}
         {Hilbert Spaces and Operator Theory}
         {PWN --- Polish Scientific Publishers and Kluwer Academic Publishers, Warszawa-Dordrecht}{1991}{#1}}
   \ITEE{#3}{JMogilski1979}{
      \BIB{#2}{J. Mogilski}
         {$CE$-decomposition of $l_2$-manifolds}
         {\jRN{BAPolSSSM}}{27}{1979}{309--314}{#1}}
   \ITEE{#3}{RLMoore1916}{
      \BIB{#2}{R.L. Moore}
         {On the foundations of plane analysis situs}
         {\jRN{TAMS}}{17}{1916}{131--164}{#1}}
   \ITEE{#3}{KMorita1955}{
      \BIB{#2}{K. Morita}
         {A condition for the metrizability of topological spaces and for $n$-dimensionality}
         {\jRN{SciRepTokyoA}}{5}{1955}{33--36}{#1}}
   \ITEE{#3}{AMukherjea,NATserpes1976}{
      \BIb{#2}{A. Mukherjea and N.A. Tserpes}
         {Measures on topological semigroups}
         {Springer Lecture Notes in Math. Vol. 547, Berlin}{1976}{#1}}
   \ITEE{#3}{JMycielski1974}{
      \BIB{#2}{J. Mycielski}
         {Remarks on invariant measures in metric spaces}
         {\jRN{CollM}}{32}{1974}{105--112}{#1}}
   \ITEE{#3}{SNNaboko1984}{
      \BIB{#2}{S.N. Naboko}
         {Conditions for similarity to unitary and selfadjoint operators}
         {\jRN{FunkAnalPril}}{18}{1984}{16--27}{#1}}
   \ITEE{#3}{LNachbin1965}{
      \BIb{#2}{L. Nachbin}
         {The Haar Integral}
         {D. Van Nostrand Company, Inc., Princeton-New Jersey-Toronto-New York-London}{1965}{#1}}
   \ITEE{#3}{TDNarang,SKGarg1991}{
      \BIB{#2}{T.D. Narang and S.K. Garg}
         {On the uniqueness of best approximation in non-archimedian spaces}
         {\jRN{PeriodMHung}}{22}{1991}{121--124}{#1}}
   \ITEE{#3}{JVonNeumann1930}{
      \BIB{#2}{J. von Neumann}
         {Zur Algebra der Funktionaloperationen und Theorie der normalen Operatoren}
         {\jRN{MAnn}}{102}{1930}{370--427}{#1}}
   \ITEE{#3}{JVonNeumann1934}{
      \BIB{#2}{J. von Neumann}
         {Zum Haarschen Mass in topologischen Gruppen}
         {\jRN{ComposM}}{1}{1934}{106--114}{#1}}
   \ITEE{#3}{JVonNeumann1937}{
      \BiB{#2}{J. von Neumann}
         {Some matrix-inequalities and metrization of matrix-space}{\jRN{TomskUnivRev}{} \textbf{1} (1937), 286--300; 
         in }{Collected Works}{Pergamon, New York}{1962}{Vol. 4, 205--219}{#1}}
   \ITEE{#3}{JVonNeumann1949}{
      \BIB{#2}{J. von Neumann}
         {On Rings of Operators. Reduction Theory}
         {\jRN{AnnM}}{50}{1949}{401--485}{#1}}
   \ITEE{#3}{ONielson1973}{
      \BIB{#2}{O. Nielson}
         {Borel sets of von Neumann algebras}
         {\jRN{AmJM}}{95}{1973}{145--164}{#1}}
   \ITEE{#3}{pn2002}{\bibITEM{#2}{#1} \mypaplist{pn1}}
   \ITEE{#3}{pn2006a}{\bibITEM{#2}{#1} \mypaplist{pn2}}
   \ITEE{#3}{pn2006b}{\bibITEM{#2}{#1} \mypaplist{pn3}}
   \ITEE{#3}{pn2007}{\bibITEM{#2}{#1} \mypaplist{pn4}}
   \ITEE{#3}{pn2008a}{\bibITEM{#2}{#1} \mypaplist{pn5}}
   \ITEE{#3}{pn2008b}{\bibITEM{#2}{#1} \mypaplist{pn6}}
   \ITEE{#3}{pn2009a}{\bibITEM{#2}{#1} \mypaplist{pn7}}
   \ITEE{#3}{pn2009b}{\bibITEM{#2}{#1} \mypaplist{pn8}}
   \ITEE{#3}{pn2009c}{\bibITEM{#2}{#1} \mypaplist{pn9}}
   \ITEE{#3}{pn2010a}{\bibITEM{#2}{#1} \mypaplist{pn12}}
   \ITEE{#3}{pn2010b}{\bibITEM{#2}{#1} \mypaplist{pn13}}
   \ITEE{#3}{pn2011a}{\bibITEM{#2}{#1} \mypaplist{pn10}}
   \ITEE{#3}{pn2011b}{\bibITEM{#2}{#1} \mypaplist{pn15}}
   \ITEE{#3}{pn2011c}{\bibITEM{#2}{#1} \mypaplist{pn16}}
   \ITEE{#3}{pn2011d}{\bibITEM{#2}{#1} \mypaplist{pn17}}
   \ITEE{#3}{pn2009x}{
      \bibITEM{#2}{#1} \mypaplist{pn11}}
   \ITEE{#3}{pn2010x}{
      \bibITEM{#2}{#1} \mypaplist{pn14}}
   \ITEE{#3}{pnXXXXb}{
      \bibITEM{#2}{#1} \mypaplist{pnX2}}
   \ITEE{#3}{pnXXXXc}{
      \bibITEM{#2}{#1} \mypaplist{pnX3}}
   \ITEE{#3}{pnXXXXd}{
      \bibITEM{#2}{#1} \mypaplist{pnX13}}
   \ITEE{#3}{MNiezgoda1998}{
      \BIB{#2}{M. Niezgoda}
         {Group majorization and Schur type inequalities}
         {\jRN{LAA}}{268}{1998}{9--30}{#1}}
   \ITEE{#3}{MNiezgoda1998a}{
      \BIB{#2}{M. Niezgoda}
         {An analytical characterization of effective and of irreducible groups inducing cone orderings}
         {\jRN{LAA}}{269}{1998}{105--114}{#1}}
   \ITEE{#3}{MNiezgoda,TYTam2001}{
      \BIB{#2}{M. Niezgoda and T.Y. Tam}
         {On norm property of $G(c)$-radii and Eaton triples}
         {\jRN{LAA}}{336}{2001}{119--130}{#1}}
   \ITEE{#3}{APazy1983}{
      \BIb{#2}{A. Pazy}{Semigroups of Linear Operators 
         and Applications to Partial Differential Equations \textup{(Applied Mathematical Sciences, vol. 44)}}
         {Springer-Verlag, New York}{1983}{#1}}
   \ITEE{#3}{APelc1982}{
      \BIB{#2}{A. Pelc}
         {Semiregular invariant measures on abelian groups}
         {\jRN{PAMS}}{86}{1982}{423--426}{#1}}
   \ITEE{#3}{RPenrose1955}{
      \BIB{#2}{R. Penrose}
         {A generalized inverse for matrices}
         {\jRN{ProcCambPhS}}{51}{1955}{406--413}{#1}}
   \ITEE{#3}{VPestov2006}{
      \BIb{#2}{V. Pestov}
         {Dynamics of infinite-dimensional groups. The Ramsey-Dvoretzky-Milman phenomenon}
         {University Lecture Series \textbf{40}, AMS, Providence, RI}{2006}{#1}}
   \ITEE{#3}{VPestov2007}{
      \BiB{#2}{V. Pestov}
         {Forty-plus annotated questions about large topological groups}
         {in:}{Open Problems in Topology II}{Elliot Pearl (editor), Elsevier B.V., Amsterdam}{2007}{439--450}{#1}}
   \ITEE{#3}{PVPetersen1993}{
      \BiB{#2}{P.V. Petersen}
         {Gromov-Hausdorff convergence of metric spaces}{in book:}{Differential Geometry: Riemannian Geometry 
         (Los Angeles, CA, 1990)}{Amer. Math. Soc., Providence, RI}{1993}{489--504}{#1}}
   \ITEE{#3}{DRamachandran,MMisiurewicz1982}{
      \BIB{#2}{D. Ramachandran and M. Misiurewicz}
         {Hopf's theorem on invariant measures for a group of transformations}
         {\jRN{SM}}{74}{1982}{183--189}{#1}}
   \ITEE{#3}{JMRosenblatt1974}{
      \BIB{#2}{J.M. Rosenblatt}
         {Equivalent invariant measures}
         {\jRN{IsraelJM}}{17}{1974}{261--270}{#1}}
   \ITEE{#3}{HLRoyden1963}{
      \BIb{#2}{H.L. Royden}
         {Real Analysis}
         {The Macmillan Co., New York}{1963}{#1}}
   \ITEE{#3}{WRudin1962}{
      \BIb{#2}{W. Rudin}
         {Fourier Analysis on Groups \textup{(Interscience Tracts in Pure and Applied Mathematics, Number 12)}}
         {Interscience Publishers, New York}{1962}{#1}}
   \ITEE{#3}{WRudin1991}{
      \BIb{#2}{W. Rudin}
         {Functional Analysis}
         {McGraw-Hill Science}{1991}{#1}}
   \ITEE{#3}{TSaito1972}{
      \BiB{#2}{T. Sait\^{o}}{Generations of von Neumann algebras}
         {Lecture Notes in Math. vol. 247}{\textup{(}Lecture on Operator Algebras\textup{)}}
         {Springer, Berlin-Heidelberg-New York}{1972}{435--531}{#1}}
   \ITEE{#3}{KSakai,MYaguchi2003}{
      \BIB{#2}{K. Sakai and M. Yaguchi}
         {Characterizing manifolds modeled on certain dense subspaces of non-separable Hilbert spaces}
         {\jRN{TsukubaJM}}{27}{2003}{143--159}{#1}}
   \ITEE{#3}{SSakai1971}{
      \BIb{#2}{S. Sakai}
         {$\CCc^*$-Algebras and $\WWw^*$-Algebras}
         {Springer-Verlag, Berlin-Heidelberg-New York}{1971}{#1}}
   \ITEE{#3}{RSchori1971}{
      \BIB{#2}{R. Schori}
         {Topological stability for infinite-dimensional manifolds}
         {\jRN{ComposM}}{23}{1971}{87--100}{#1}}
   \ITEE{#3}{JTSchwartz1967}{
      \BIb{#2}{J.T. Schwartz}
         {$\WWw^*$-algebras}
         {Gordon and Breach, Science Publishers Inc., New York-London-Paris}{1967}{#1}}
   \ITEE{#3}{ZSemadeni1971}{
      \BIb{#2}{Z. Semadeni}
         {Banach Spaces of Continuous Functions (Vol. I)}
         {\jRN{PWN}}{1971}{#1}}
   \ITEE{#3}{JPSerre1951}{
      \BIB{#2}{J.-P. Serre}
         {Homologie singuli\`{e}re des espaces fibr\'{e}s}
         {\jRN{AnnM}}{54}{1951}{425--505}{#1}}
   \ITEE{#3}{DSherman2007}{
      \BIB{#2}{D. Sherman}
         {On the dimension theory of von Neumann algebras}
         {\jRN{MScand}}{101}{2007}{123--147}{#1}}
   \ITEE{#3}{WSierpinski1928}{
      \BIB{#2}{W. Sierpi\'{n}ski}
         {Sur les projections des ensembles compl\'{e}mentaires aux ensembles \textup{(A)}}
         {\jRN{FM}}{11}{1928}{117--122}{#1}}
   \ITEE{#3}{MSlocinski1980}{
      \BIB{#2}{M. S\l{}oci\'{n}ski}
         {On the Wold-type decomposition of a pair of commuting isometries}
         {\jRN{APM}}{37}{1980}{255--262}{#1}}
   \ITEE{#3}{RCSteinlage1975}{
      \BIB{#2}{R.C. Steinlage}
         {On Haar measure in locally compact $T_2$ spaces}
         {\jRN{AmJM}}{97}{1975}{291--307}{#1}}
   \ITEE{#3}{JStochel,FHSzafraniec1989}{
      \BIB{#2}{J. Stochel and F.H. Szafraniec}
         {On normal extensions of unbounded operators. III. Spectral properties}
         {\jRN{PublRIMSKyoto}}{25}{1989}{105--139}{#1}}
   \ITEE{#3}{JStochel,FHSzafraniec1989a}{
      \BIB{#2}{J. Stochel and F.H. Szafraniec}
         {The normal part of an unbounded operator}
         {\jRN{ProcKonink}}{92}{1989}{495--503}{#1}}
   \ITEE{#3}{AHStone1962}{
      \BIB{#2}{A.H. Stone}
         {Absolute $\FFf_{\sigma}$-spaces}
         {\jRN{PAMS}}{13}{1962}{495--499}{#1}}
   \ITEE{#3}{AHStone1962a}{
      \BIB{#2}{A.H. Stone}
         {Non-separable Borel sets}
         {\jRN{DissM}}{28}{1962}{41 pages}{#1}}
   \ITEE{#3}{AHStone1972}{
      \BIB{#2}{A.H. Stone}
         {Non-separable Borel sets II}
         {\jRN{GTopA}}{2}{1972}{249--270}{#1}}
   \ITEE{#3}{MHStone1937}{
      \BIB{#2}{M.H. Stone}
         {Application of the theory of Boolean rings to general topology}
         {\jRN{TAMS}}{41}{1937}{375--481}{#1}}
   \ITEE{#3}{MHStone1948}{
      \BIB{#2}{M.H. Stone}
         {The generalized Weierstrass approximation theorem}
         {\jRN{MMag}}{21}{1948}{167--184}{#1}}
   \ITEE{#3}{BSz-Nagy1947}{
      \BIB{#2}{B. Sz.-Nagy}
         {On uniformly bounded linear transformations in Hilbert space}
         {\jRN{ActaSM}}{11}{1947}{152--157}{#1}}
   \ITEE{#3}{WTakahashi1970}{
      \BIB{#2}{W. Takahashi}
         {A convexity in metric space and nonexpansive mappings, I}
         {\jRN{KodaiMSemRep}}{22}{1970}{142--149}{#1}}
   \ITEE{#3}{MTakesaki2002}{
      \BIb{#2}{M. Takesaki}
         {Theory of Operator Algebras I \textup{(Encyclopaedia of Mathematical Sciences, Volume 124)}}
         {Springer-Verlag, Berlin-Heidelberg-New York}{2002}{#1}}
   \ITEE{#3}{MTakesaki2003}{
      \BIb{#2}{M. Takesaki}
         {Theory of Operator Algebras II \textup{(Encyclopaedia of Mathematical Sciences, Volume 125)}}
         {Springer-Verlag, Berlin-Heidelberg-New York}{2003}{#1}}
   \ITEE{#3}{MTakesaki2003a}{
      \BIb{#2}{M. Takesaki}
         {Theory of Operator Algebras III \textup{(Encyclopaedia of Mathematical Sciences, Volume 127)}}
         {Springer-Verlag, Berlin-Heidelberg-New York}{2003}{#1}}
   \ITEE{#3}{TYTam1999}{
      \BIB{#2}{T.Y. Tam}
         {An extension of a result of Lewis}
         {\jRN{ELA}}{5}{1999}{1--10}{#1}}
   \ITEE{#3}{TYTam2000}{
      \BIB{#2}{T.Y. Tam}
         {Group majorization, Eaton triples and numerical range}
         {\jRN{LMLA}}{47}{2000}{11--28}{#1}}
   \ITEE{#3}{TYTam2002}{
      \BIB{#2}{T.Y. Tam}
         {Generalized Schur-concave functions and Eaton triples}
         {\jRN{LMLA}}{50}{2002}{113--120}{#1}}
   \ITEE{#3}{TYTam,WCHill2001}{
      \BIB{#2}{T.Y. Tam and W.C. Hill}
         {On $G$-invariant norms}
         {\jRN{LAA}}{331}{2001}{101--112}{#1}}
   \ITEE{#3}{AFTiman,IAVestfrid1983}{
      \BIB{#2}{A.F. Timan and I.A. Vestfrid}
         {Any separable ultrametric space can be isometrically imbedded in $l_2$}
         {\jRN{FAA}}{17}{1983}{70--71}{#1}}
   \ITEE{#3}{JTomiyama1958}{
      \BIB{#2}{J. Tomiyama}
         {Generalized dimension function for $\WWw^*$-algebras of infinite type}
         {\jRN{TohokuMJ} (2)}{10}{1958}{121--129}{#1}}
   \ITEE{#3}{HTorunczyk1970}{
      \BIB{#2}{H. Toru\'{n}czyk}
         {Remarks on Anderson's paper ``On topological infinite deficiency''}
         {\jRN{FM}}{66}{1970}{393--401}{#1}}
   \ITEE{#3}{HTorunczyk1970a}{
      \BIb{#2}{H. Toru\'{n}czyk}
         {$G$-$K$-absorbing and skeletonized sets in metric spaces}
         {Ph.D. thesis, Inst. Math. Polish Acad. Sci., Warszawa}{1970}{#1}}
   \ITEE{#3}{HTorunczyk1972}{
      \BIB{#2}{H. Toru\'{n}czyk}
         {A short proof of Hausdorff's theorem on extending metrics}
         {\jRN{FM}}{77}{1972}{191--193}{#1}}
   \ITEE{#3}{HTorunczyk1974}{
      \BIB{#2}{H. Toru\'{n}czyk}
         {Absolute retracts as factors of normed linear spaces}
         {\jRN{FM}}{86}{1974}{53--67}{#1}}
   \ITEE{#3}{HTorunczyk1975}{
      \BIB{#2}{H. Toru\'{n}czyk}
         {On Cartesian factors and the topological classification of linear metric spaces}
         {\jRN{FM}}{88}{1975}{71--86}{#1}}
   \ITEE{#3}{HTorunczyk1978}{
      \BIB{#2}{H. Toru\'{n}czyk}
         {Concerning locally homotopy negligible sets and characterization of $l_2$-manifolds}
         {\jRN{FM}}{101}{1978}{93--110}{#1}}
   \ITEE{#3}{HTorunczyk1980}{
      \BiB{#2}{H. Toru\'{n}czyk}{Characterization of infinite-dimensional manifolds}{in:}
         {Proceedings of the International Conference on Geometric Topology (Warsaw, 1978)}
         {\jRN{PWN}}{1980}{431--437}{#1}}
   \ITEE{#3}{HTorunczyk1981}{
      \BIB{#2}{H. Toru\'{n}czyk}
         {Characterizing Hilbert space topology}
         {\jRN{FM}}{111}{1981}{247--262}{#1}}
   \ITEE{#3}{HTorunczyk1985}{
      \BIB{#2}{H. Toru\'{n}czyk}
         {A correction of two papers concerning Hilbert manifolds}
         {\jRN{FM}}{125}{1985}{89--93}{#1}}
   \ITEE{#3}{KTsuda1985}{
      \BIB{#2}{K. Tsuda}
         {A note on closed embeddings of finite dimensional metric spaces}
         {\jRN{BLondMS}}{17}{1985}{273--278}{#1}}
   \ITEE{#3}{PSUrysohn1925}{
      \BIB{#2}{P.S. Urysohn}
         {Sur un espace m\'{e}trique universel}
         {\jRN{CRASParis}}{180}{1925}{803--806}{#1}}
   \ITEE{#3}{PSUrysohn1927}{
      \BIB{#2}{P.S. Urysohn}
         {Sur un espace m\'{e}trique universel}
         {\jRN{BullSM}}{51}{1927}{43--64, 74--96}{#1}}
   \ITEE{#3}{VVUspenskij1986}{
      \BIB{#2}{V.V. Uspenskij}
         {A universal topological group with a countable basis}
         {\jRN{FAA}}{20}{1986}{86--87}{#1}}
   \ITEE{#3}{VVUspenskij1990}{
      \BIB{#2}{V.V. Uspenskij}
         {On the group of isometries of the Urysohn universal metric space}
         {\jRN{CMUC}}{31}{1990}{181--182}{#1}}
   \ITEE{#3}{VVUspenskij2004}{
      \BIB{#2}{V.V. Uspenskij}
         {The Urysohn universal metric space is homeomorphic to a Hilbert space}
         {\jRN{TopA}}{139}{2004}{145--149}{#1}}
   \ITEE{#3}{VVUspenskij2008}{
      \BIB{#2}{V.V. Uspenskij}
         {On subgroups of minimal topological groups}
         {\jRN{TopA}}{155}{2008}{1580--1606}{#1}}
   \ITEE{#3}{VSVaradarajan1963}{
      \BIB{#2}{V.S. Varadarajan}
         {Groups of automorphisms of Borel spaces}
         {\jRN{TAMS}}{109}{1963}{191--220}{#1}}
   \ITEE{#3}{AMVershik1998}{
      \BIB{#2}{A.M. Vershik}
         {The universal Urysohn space, Gromov's metric triples, and random metrics on the series of natural numbers}
         {\jRN{UspekhiMN}}{53}{1998}{57--64}{#1} English translation: \jRN{RussMS}{} \textbf{53} (1998), 921--928. 
         Correction: \jRN{UspekhiMN}{} \textbf{56} (2001), p. 207. English translation: \jRN{RussMS}{} \textbf{56} 
         (2001), p. 1015.}
   \ITEE{#3}{AMVershik2002}{
      \BIb{#2}{A.M. Vershik}
         {Random metric spaces and the universal Urysohn space}
         {Fundamental Mathematics Today. 10th anniversary of the Independent Moscow University. MCCME Publ.}{2002}{#1}}
   \ITEE{#3}{NWeaver1999}{
      \BIb{#2}{N. Weaver}
         {Lipschitz Algebras}
         {World Scientific}{1999}{#1}}
   \ITEE{#3}{JWeidmann1980}{
      \BIb{#2}{J. Weidmann}
         {Linear Operators in Hilbert Spaces}
         {(Graduate Texts in Mathematics, vol. 68) Springer-Verlag New York Inc.}{1980}{#1}}
   \ITEE{#3}{JEWest1969}{
      \BIB{#2}{J.E. West}
         {Approximating homotopies by isotopies in Fr\'{e}chet manifolds}
         {\jRN{BAMS}}{75}{1969}{1254--1257}{#1}}
   \ITEE{#3}{JEWest1969a}{
      \BIB{#2}{J.E. West}
         {Fixed-point sets of transformation groups on infinite-product spaces}
         {\jRN{PAMS}}{21}{1969}{575--582}{#1}}
   \ITEE{#3}{JEWest1970}{
      \BIB{#2}{J.E. West}
         {The ambient homeomorphy of infinite-dimensional Hilbert spaces}
         {\jRN{PacJM}}{34}{1970}{257--267}{#1}}
   \ITEE{#3}{JHCWhitehead1949}{
      \BIB{#2}{J.H.C. Whitehead}
         {Combinatorial homotopy I}
         {\jRN{BAMS}}{55}{1949}{213--245}{#1}}
   \ITEE{#3}{GTWhyburn1942}{
      \BIb{#2}{G. T. Whyburn}
         {Analytic Topology}
         {Amer. Math. Soc. Colloquium Publications (vol. XXVIII), New York}{1942}{#1}}
   \ITEE{#3}{WWogen1969}{
      \BIB{#2}{W. Wogen}
         {On generators for von Neumann algebras}
         {\jRN{BAMS}}{75}{1969}{95--99}{#1}}
   \ITEE{#3}{RYTWong1967}{
      \BIB{#2}{R.Y.T. Wong}
         {On homeomorphisms of certain infinite dimensional spaces}
         {\jRN{TAMS}}{128}{1967}{148--154}{#1}}
   \ITEE{#3}{LYang,JZhang1987}{
      \BIB{#2}{L. Yang and J. Zhang}
         {Average distance constants of some compact convex space}
         {\jRN{JChinUST}}{17}{1987}{17--23}{#1}}
   \ITEE{#3}{PZakrzewski1993}{
      \BIB{#2}{P. Zakrzewski}
         {The existence of invariant $\sigma$-finite measures for a group of transformations}
         {\jRN{IsraelJM}}{83}{1993}{275--287}{#1}}
   \ITEE{#3}{PZakrzewski2002}{
      \BIb{#2}{P. Zakrzewski}
         {Measures on Algebraic-Topological Structures, Handbook of Measure Thoery}
         {E. Pap, ed., Elsevier, Amsterdam}{2002, 1091--1130}{#1}}
   \ITEE{#3}{KZhu2000}{
      \BIB{#2}{K. Zhu}
         {Operators in Cowen-Douglas classes}
         {\jRN{IllinoisJM}}{44}{2000}{767--783}{#1}}
   }
\newcommand{\mypaplist}[2][]{
   \ITEE{#2}{pn1}{
      \myBIB{Separate and joint similarity to families of normal operators}
         {\jRN[#1]{SM}}{149}{2002}{39--62}}
   \ITEE{#2}{pn2}{
      \myBIB{Locally arcwise connected metrizable spaces with the fixed point property are complete-metrizable}
         {\jRN[#1]{TopA}}{153}{2006}{1639--1642}}
   \ITEE{#2}{pn3}{
      \myBIB{Invariant measures for equicontinuous semigroups of continuous transformations of a compact Hausdorff space}
         {\jRN[#1]{TopA}}{153}{2006}{3373--3382}}
   \ITEE{#2}{pn4}{
      \myBIB{Approximation of the Hausdorff distance by the distance of continuous surjections}
         {\jRN[#1]{TopA}}{154}{2007}{655--664}}
   \ITEE{#2}{pn5}{
      \myBIB{Generalized Haar integral}
         {\jRN[#1]{TopA}}{155}{2008}{1323--1328}}
   \ITEE{#2}{pn6}{
      \myBIB{Integration and Lipschitz functions}
         {\jRN[#1]{RCMP}}{57}{2008}{391--399}}
   \ITEE{#2}{pn7}{
      \myBIB{Canonical Banach function spaces generated by Urysohn universal spaces. Measures as Lipschitz maps}
         {\jRN[#1]{SM}}{192}{2009}{97--110}}
   \ITEE{#2}{pn8}{
      \myBIB{Urysohn universal spaces as metric groups of exponent $2$}
         {\jRN[#1]{FM}}{204}{2009}{1--6}}
   \ITEE{#2}{pn9}{
      \myBIB{Central subsets of Urysohn universal spaces}
         {\jRN[#1]{CMUC}}{50}{2009}{445--461}}
   \ITEE{#2}{pn10}{
      \myBIB[P. Niemiec and T.Y. Tam]{A representation of $G$-in\-variant norms for Eaton triple}
         {\jRN[#1]{JCA}}{18}{2011}{59--65}}
   \ITEE{#2}{pn11}{
      \myBIB{Functor of extension of contractions on Urysohn universal spaces}
         {\jRN[#1]{ACS}}{}{2009}{\texttt{DOI: 10.1007/s10485-009-9218-z}}}
   \ITEE{#2}{pn12}{
      \myBIB{Ultra-$\mM$-separability}
         {\jRN[#1]{TopA}}{157}{2010}{669--673}}
   \ITEE{#2}{pn13}{
      \myBIB{Functor of extension of $\Lambda$-isometric maps between central subsets 
         of the unbounded Urysohn universal space}{\jRN[#1]{CMUC}}{51}{2010}{541--549}}
   \ITEE{#2}{pn14}{
      \myBIB{Normed topological pseudovector groups}{\jRN[#1]{ACS}}{}{2010}
         {\ITE{\equal{#1}{}}{\texttt{DOI: 10.1007/s10485\-010-9239-7}}{\texttt{DOI: 10.1007/s10485-010-9239-7}}}}
   \ITEE{#2}{pn15}{
      \myBIB{Topological structure of Urysohn universal spaces}
         {\jRN[#1]{TopA}}{158}{2011}{352--359}}
   \ITEE{#2}{pn16}{
      \myBIB{A note on invariant measures}
         {\jRN[#1]{OpusM}}{31}{2011}{425--431}}
   \ITEE{#2}{pn17}{
      \myBIB{Strengthened Stone-Weierstrass type theorem}
         {\jRN[#1]{OpusM}}{31}{2011}{645--650}}
   \ITEE{#2}{pnX2}{
      \myBAPP{Functor of continuation in Hilbert cube and Hilbert space}
         {to appear in \jRN[#1]{FM}}}
   \ITEE{#2}{pnX3}{
      \myBAPP{Norm closures of orbits of bounded operators}
         {to appear.}}
   \ITEE{#2}{pnX6}{
      \myBAPP{Extending maps by injective $\sigma$-$Z$-maps in Hilbert manifolds}
         {to appear in \jRN[#1]{BullPol}}}
   \ITEE{#2}{pnX7}{
      \myBAPP{Spaces of measurable functions}
         {submitted to \jRN[#1]{CollectM}}}
   \ITEE{#2}{pnX8}{
      \myBAPP{Normal systems over ANR's, rigid embeddings and nonseparable absorbing sets}
         {submitted to \jRN[#1]{ActaMSinES}}}
   \ITEE{#2}{pnX9}{
      \myBAPP{Borel structure of the spectrum of a closed operator}
         {submitted to \jRN[#1]{SM}}}
   \ITEE{#2}{pnX10}{
      \myBAPP{Central points and measures and dense subsets of compact metric spaces}
         {submitted to \jRN[#1]{TopMethNA}}}
   \ITEE{#2}{pnX11}{
      \myBAPP{Generalized absolute values and polar decompositions of a bounded operator}
         {submitted to \jRN[#1]{IEOT}.}}
   \ITEE{#2}{pnX12}{
      \myBAPP{Ultrametrics, extending of Lipschitz maps and nonexpansive selections}
         {accepted for publication in \jRN[#1]{HJM}}}
   \ITEE{#2}{pnX13}{
      \myBAPP{A note on ANR's}
         {submitted to \jRN[#1]{TopA}}}
   \ITEE{#2}{pnX14}{
      \myBAPP{Problem with almost everywhere equality}
         {submitted to \jRN[#1]{ArchM}}}
   \ITEE{#2}{pnX15}{
      \myBAPP{Universal valued Abelian groups}
         {submitted to \jRN[#1]{LNM}}}
   \ITEE{#2}{pnX16}{
      \myBAPP{Unitary equivalence and decompositions of finite systems of closed densely defined operators 
         in Hilbert spaces}{submitted to \jRN[#1]{DissM}}}
   }
\begin{document}

\title{Spaces of measurable functions}
\myData
\begin{abstract}
For a metrizable space $X$ and a finite measure space $(\Omega,\Mm,\mu)$ let $M_{\mu}(X)$ and $M^f_{\mu}(X)$ be
the spaces of all equivalence classes (under the relation of equality almost everywhere mod $\mu$) of $\Mm$-measurable
functions from $\Omega$ to $X$ whose images are separable and finite, respectively, equipped with the topology
of convergence in measure. The main aim of the paper is to prove the following result: if $\mu$ is (nonzero and)
nonatomic and $X$ has more than one point, then the space $M_{\mu}(X)$ is a noncompact absolute retract
and $M^f_{\mu}(A)$ is homotopy dense in $M_{\mu}(X)$ for each dense subset $A$ of $X$. In particular, if $X$ is
completely metrizable, then $M_{\mu}(X)$ is homeomorphic to an infinite-dimensional Hilbert space.\\
\textit{2000 MSC: 54C35, 54C55, 54H05, 57N20, 58D15.}\\
Key words: measurable functions, absolute retracts, infinite-dimen\-sional manifolds, reflective isotopy property,
$Z$-sets.
\end{abstract}
\maketitle


In \cite{b-p} Bessaga and Pe\l{}czy\'{n}ski have proved that whenever $X$ is a separable completely metrizable
topological space having more than one point, then the space $M_X$ of Borel functions from $[0,1]$ to $X$
with the topology of convergence in measure is homeomorphic to $l^2$. Later it turned out that the topology
of $l^2$ can be well characterized. This was done by Toru\'{n}czyk\cite{tor1,tor2}. After publication of the latter
papers the number of results on spaces homeomorphic to the separable infinite-dimensional Hilbert space has highly rised.
For example, Dobrowolski and Toru\'{n}czyk\cite{d-t} have shown that every separable completely metrizable non-locally
compact topological group which is an AR is homeomorphic to a Hilbert space. However, the problem whether the assumption
of separability in the latter may be omitted is still open (see \cite{b-z}). In this paper we shall introduce a class
of nonseparable completely metrizable topological groups which are homeomorphic to Hilbert spaces. Namely, if $G$ is
any (nonzero) completely metrizable topological group and $\mu$ is a (nonzero) finite nonatomic measure, then the space
$M_{\mu}(G)$ (defined in Abstract) has a natural structure (induced by the one of $G$) of a topological group and is
homeomorphic to a Hilbert space. In fact we shall prove the following, quite more general, result: if $X$ is a nonempty
metrizable space, $\mu$ is a finite nonatomic measure and $Y = M^r_{\mu}(X)$ is the subspace of $M_{\mu}(X)$ consisting
of all (equivalence classes of) functions whose images are contained in $\sigma$-compact subsets of $X$, then $Y$ is
an absolute retract such that $Y^{\omega} \cong Y$. Since infinite-dimensional Hilbert spaces are the only completely
metrizable noncompact AR's homeomorphic to their own countable infinite Cartesian powers (\cite{tor1}), the latter
mentioned result may be seen as a generalization of earlier results of Bessaga and Pe\l{}czy\'{n}ski\cite{b-p}
as well as of Toru\'{n}czyk\cite{tor3}.\par
Other purpose of the paper is to present the idea of extending maps between metrizable spaces to maps between AR's
via functors. Namely, whenever $\mu$ is a finite (nonzero) nonatomic measure, every map $f\dd X \to Y$ has a natural
extension $M_{\mu}(f)\dd M_{\mu}(X) \to M_{\mu}(Y)$. What is more, the correspondence $f \leftrightarrow M_{\mu}(f)$
preserves many properties (such as: being an injection, an embedding, a map with dense image). We shall show that
if $m$ is the Lebesgue measure on $[0,1]$, the space $W = M_m(X)$ is always an AR satisfying the following conditions:
$X$ is a $Z$-set in $W$ (provided $X$ has more than one point), $W^{\omega} \cong W$, $W$ has RIP and is an S-space
(in the sense of Schori\cite{schori}). As an immediate consequence of this, we shall obtain that if $U$ is a metrizable
manifold modelled on $W$, then $U$ is $W$-stable, i.e. $U \times W \cong U$.\par
Another issue we shall discuss here concerns the question of whether $M_m(M_m(X))$ is homeomorphic to $M_m(X)$.
We shall see that the answer is affirmative for a huge class of metrizable spaces (namely, for spaces in which
every closed separable subset is absolutely measurable), which contains locally absolutely Borel spaces and (separable)
Souslin ones. However, in general we leave this question as an open problem.\par
The article is organized as follows. In the first section we establish notation and terminology, define general spaces
of measurable functions and collect several results on them. Section 2 deals with spaces $M^r_{\mu}(X)$, defined in
this introduction. We show there that if $\mu$ and $\nu$ are two homogeneous (nonatomic) measures of the same weight,
then the spaces $M^r_{\mu}(X)$ and $M^r_{\nu}(X)$ are naturally homeomorphic, whatever $X$ is. The third part is devoted
to spaces of measurable functions over metrizable AM-spaces (i.e. in which every closed separable subset is absolutely
measurable). We prove there that if $X$ is an AM-space, then $M_{\mu}(X) = M^r_{\mu}(X)$ for each finite
measure $\mu$. In Section 4 we state and prove the main result of the paper, which includes the claim that spaces
of measurable functions are absolute retracts. We conclude from this that such spaces over completely metrizable ones
are homeomorphic to Hilbert spaces. In the last part we generalize our results of \cite{pn} to nonseparable case.
Also the idea of extending maps to AR's via the functors $M_{\mu}$ is presented.

\SECT{Preliminaries}

In this paper $\RRR_+$ and $\NNN$ denote the sets of nonnegative reals and integers, respectively, $I = [0,1]$
and $m$ stands for the Lebesgue measure on $I$. If $g$ is any function, $\im g$ stands for the image of $g$.
If, in addition, $g$ takes values in a topological space, $\overline{\im}\, g$ denotes the closure of $\im g$
in the whole space. The weight of a topological space $X$ is denoted by $w(X)$ and is understood as an \textbf{infinite}
cardinal number (i.e. $w(X) = \aleph_0$ for finite $X$). All topological spaces which appear in the paper are metrizable
and all measures are nonnegative, finite and nonzero. For topological spaces $Y$ and $Z$ we shall write $Y \cong Z$
iff $Y$ and $Z$ are homeomorphic. By a \textit{map} we mean a continuous function. If $X$ is a metrizable space,
$X^{\omega}$ stands for the countable infinite Cartesian power of $X$, equipped with the Tichonov topology, and $\Metr(X)$
denotes the family of all bounded metrics on $X$ which induce the given topology of $X$. $\Bb(X)$ stands
for the $\sigma$-algebra of all Borel subsets of $X$, that is, $\Bb(X)$ is the smallest $\sigma$-algebra containing
all open subsets of $X$. If $(\Omega_1 \times \Omega_2,\Mm,\mu)$ is the product space of measure spaces
$(\Omega_1,\Mm_1,\mu_1)$ and $(\Omega_2,\Mm_2,\mu_2)$, then we shall write $\Mm_1 \otimes \Mm_2$ and $\mu_1 \otimes \mu_2$
for $\Mm$ and $\mu$, respectively.\par
Whenever $(\Omega,\Mm)$ is a measurable space and $X$ is a metrizable space, a function $f\dd \Omega \to X$ is
\textit{$\Mm$-measurable}, if $f^{-1}(U) \in \Mm$ for each open subset $U$ of $X$. Sets which are members of $\Mm$
are said to be \textit{measurable}. By a \textit{$\mu$-partition} of $B \in \Mm$ we mean any family $\{B_j\}_{j \in J}$
(with $J \subset \NNN$) of measurable pairwise disjoint sets such that $\mu(B_j) > 0$ for each $j \in J$ and $B =
\bigcup_{j \in J} B_j$. If the images of $\Mm$-measurable functions $f_j\dd \Omega \to X_j$, where $j \in J \subset
\NNN$, are separable, then also the function $\Omega \ni \omega \mapsto (f_j(\omega))_{j \in J} \in \prod_{j \in J} X_j$
is $\Mm$-measurable. Therefore, if $J = \{1,2\}$ and $X_2 = X_1 = X$, the set $\{\omega \in \Omega\dd\ f_1(\omega)
\neq f_2(\omega)\}$ is measurable.\par
We use standard terminology and ideas of measure theory. For details the Reader is referred e.g. to \cite{halmos}.
For example, every measurable function $f\dd \Omega \to X$ with separable image defined on a measure space
$(\Omega,\Mm,\mu)$ will be identified with its equivalence class (in the set of all measurable functions $\Omega \to X$
with separable images) with respect to the relation of almost everywhere equality mod $\mu$. The set of all such
(equivalence classes of) functions is denoted by $M_{\mu}(X)$. The subfamilies of $M_{\mu}(X)$ consisting of all those
functions whose images are, respectively, finite, (at most) countable and contained in $\sigma$-compact subsets of $X$
are denoted by $M^f_{\mu}(X)$, $M^c_{\mu}(X)$ and $M^r_{\mu}(X)$. We clearly have $M^f_{\mu}(X) \subset M^c_{\mu}(X)
\subset M^r_{\mu}(X) \subset M_{\mu}(X)$. Each of the latter inclusions may be proper (the example for the last one
is given in Section~3, see \EXM{notAM}). If $A$ is a subset of $X$, we may and shall naturally identify the members
of $M_{\mu}(A)$ with elements of $M_{\mu}(X)$. Thus, if $N$ stands for $M^f$, $M^c$, $M^r$ or $M$, then $N_{\mu}(A)
\subset N_{\mu}(X)$. Analogously, if $\Nn$ is a $\sigma$-subalgebra of $\Mm$ and $\nu = \mu\bigr|_{\Nn}$,
then for $N=M,M^f,M^c,M^r$ the function $N_{\nu}(X) \ni f \mapsto f \in N_{\mu}(X)$ is well defined (and is isometric
with respect to the metrics $M_{\nu}(d)$ and $M_{\mu}(d)$, defined in the sequel, for every $d \in \Metr(X)$).
The Boolean $\sigma$-algebra (equipped with the metric induced by the measure) associated with a measure space
$(\Omega,\Mm,\mu)$ will be denoted by $\Aa(\mu)$. The weight of $\Aa(\mu)$ is called by us the \textit{weight of $\mu$}
and is denoted by $w(\mu)$. We call the measure $\mu$ \textit{simple} if $\mu(B) \in \{0,\mu(\Omega)\}$
for each $B \in \Mm$ and $\mu$ is \textit{nonatomic} if for every $B \in \Mm$ of positive $\mu$-measure there is a subset
$A \in \Mm$ of $B$ with $0 < \mu(A) < \mu(B)$. Finally, $\mu$ is \textit{homogeneous} if it is nonatomic and for each
$B \in \Mm$ of positive $\mu$-measure, $w(\mu) = w(\mu\bigr|_B)$, where $\mu\bigr|_B = \mu\bigr|_{\Mm_B}$ is a measure
on $B$ and $\Mm_B = \{A \in \Mm\dd\ A \subset B\}$.\par
From now on, we assume that $(\Omega,\Mm,\mu)$ is a measure space with (nonzero) finite measure $\mu$ and that $X$ is
a (nonempty) metrizable space. The space $M_{\mu}(X)$ and all its subsets will always be equipped with the topology
of convergence in measure. In other words, a sequence $(f_n)_n$ of elements of $M_{\mu}(X)$ converges to $f \in M_{\mu}(X)$
iff every its subsequence contains a subsequence $(f_{\nu_n})_n$ such that $f_{\nu_n}(\omega) \to f(\omega)\ (n\to\infty)$
for $\mu$-almost all $\omega \in \Omega$. It is well known that if $\varrho \in \Metr(X)$, then $M_{\mu}(\varrho)
\in \Metr(M_{\mu}(X))$, where $M_{\mu}(\varrho)(f,g) = \int_{\Omega} \varrho(f(\omega),g(\omega)) \dint{\mu(\omega)}$.\par
It is clear that if $(X,\cdot)$ is a metrizable group, then $M_{\mu}(X)$ has a natural topological group structure
(that is, with the pointwise multiplication) induced by the one of $X$.\par
For each $x \in X$ denote by $\delta_{\mu,x} \in M_{\mu}(X)$ the constant function with the only value equal to $x$ and let
$\Delta_{\mu}(X) = \{\delta_{\mu,x}\dd\ x \in X\}$ and $\delta_{\mu,X}\dd X \ni x \mapsto \delta_{\mu,x}
\in \Delta_{\mu}(X) \subset M_{\mu}(X)$.\par
The following are a kind of folklore. Most of them can easily be proved.
\begin{enumerate}[(M1)]
\item $\Delta_{\mu}(X)$ is closed in $M_{\mu}(X)$ and $\delta_{\mu,X}\dd (X,d) \to (\Delta_{\mu}(X),M_{\mu}(d))$
   is an isometry for each $d \in \Metr(X)$. In particular, $\Delta_{\mu}(X) \cong X$. If $X$ is a group,
   $\delta_{\mu,X}$ is a homomorphism.
\item If $\Nn \subset \Mm$ is an algebra of subsets of $X$ which is dense in $\Aa(\mu)$ and $D$ is a dense subset of $X$,
   then the set $M^f(\Nn,D)$ consisting of such functions $f \in M^f_{\mu}(D)$ that $f^{-1}(\{x\}) \in \Nn$ for each
   $x \in D$ is dense in $M_{\mu}(X)$. In particular, $w(M_{\mu}(X)) = \max(w(\mu),w(X))$.
\item If $d \in \Metr(X)$, then $M_{\mu}(d)$ is complete (in the whole space $M_{\mu}(X)$) iff $d$ is complete. The space
   $M_{\mu}(X)$ is completely metrizable iff $X$ is so. Moreover, if $\card X > 1$, then $M_{\mu}(X)$ is noncompact.
\item For each $A \subset X$, $\overline{M_{\mu}(A)} = M_{\mu}(\bar{A})$ (the first closure is in $M_{\mu}(X)$).
\item The measure $\mu$ is nonatomic iff there is a family $\{A_t\}_{t \in I}$ of measurable sets such that
   $A_s \subset A_t$ for $s \leqsl t$ and $\mu(A_t) = t \mu(\Omega)$.
\item If $\mu$ is nonatomic and $\{A_t\}_{t \in I}$ is a family as in (M5), then the map $\lambda\dd
   M_{\mu}(X) \times M_{\mu}(X) \times I \ni (f,g,t) \mapsto f\bigr|_{\Omega \setminus A_t} \cup g\bigr|_{A_t}\in
   M_{\mu}(X)$ is continuous. Moreover, $\lambda(f,g,0) = f$, $\lambda(f,g,1) = g$ and $\lambda(N_{\mu}(X) \times
   N_{\mu}(X) \times I) = N_{\mu}(X)$ for $N = M^f,M^c,M^r$. In particular, each of the spaces $N_{\mu}(X)$
   with $N = M,M^f,M^c,M^r$ is contractible, provided $X$ is nonempty (in fact they are equiconnected).
\item If $\{A_j\}_{j \in J}\ (J \subset \NNN)$ is a $\mu$-partition of $\Omega$; $\lambda = \frac{\mu}{\mu(\Omega)}$
   and $\lambda_j = \frac{\mu\bigr|_{A_j}}{\mu(A_j)}$, then the map $\Phi\dd (M_{\lambda}(X),M_{\lambda}(d)) \ni f
   \mapsto (f\bigr|_{A_j})_{j \in J} \in \bigl(\prod_{j \in J} M_{\lambda_j}(X),\tilde{d}\,\bigr)$ is an isometry,
   where $\tilde{d}((f_j)_{j \in J},(g_j)_{j \in J}) = \sum_{j \in J} \mu(A_j) M_{\lambda_j}(d)(f_j,g_j)$
   for $d \in \Metr(X)$. Moreovoer, $$\Phi(N_{\lambda}(X)) = \prod_{j \in J} N_{\lambda_j}(X)$$ for $N = M^c,M^r$.
   In particular, $N_{\lambda}(X)$ is homeomorphic to $\prod_{j \in J} N_{\lambda_j}(X)$ for $N = M,M^c,M^r$.
\item Let $\{(X_j,d_j)\}_{j \in J}\ (J \subset \NNN)$ be a collection of metric spaces with metrics upper bounded by $1$
   and let $\{a_j\}_{j \in J}$ be a family of positive numbers such that $\sum_{j \in J} a_j < +\infty$.
   Let $X = \prod_{j \in J} X_j$ be a metric space with metric $d((x_j)_{j \in J}, (y_j)_{j \in J}) = \sum_{j \in J}
   a_j d_j(x_j,y_j)$. Analogously, let $D$ be the metric on $\prod_{j \in J} M_{\mu}(X_j)$ given
   by $$D((f_j)_{j \in J},(g_j)_{j \in J}) = \sum_{j \in J} a_j M_{\mu}(d_j)(f_j,g_j).$$ Then the map $$\Psi\dd
   (M_{\mu}(X),M_{\mu}(d)) \ni F \mapsto (p_j \circ F)_{j \in J} \in (\prod_{j \in J} M_{\mu}(X_j),D),$$ where $p_j\dd
   X \to X_j$ is the natural projection, is an isometry. In particular, $M_{\mu}(\prod_{j \in J} X_j)$ is homeomorphic
   to $\prod_{j \in J} M_{\mu}(X_j)$. If $J$ is finite, then
   \begin{equation}\label{eqn:sigma-product}
   \Psi(M^r_{\mu}(X)) = \prod_{j \in J} M^r_{\mu}(X_j)
   \end{equation}
   and $\Psi(M^c_{\mu}(X)) = \prod_{j \in J} M^c_{\mu}(X_j)$.
\item There is a finite or countable collection $\{A_j\}_{j \in J} \cup \{B_k\}_{k \in K}$ (each of $J$ and $K$ may be
   empty) of measurable sets of positive $\mu$-measure such that $\mu\bigr|_{A_j}$ is simple for each $j \in J$,
   while the measures $\mu\bigr|_{B_k}$ with $k \in K$ are homogeneous and of different weights.
\item If $\mu$ is an atom, then $M_{\mu}(X) = M^f_{\mu}(X) = \Delta_{\mu}(X)$ and thus $M_{\mu}(X) \cong X$.
\item (Maharam\cite{maharam}) If $(\Omega_j,\Mm_j,\mu_j)\ (j=1,2)$ are probabilistic spaces such that
   both $\mu_1$ and $\mu_2$ are homogeneous and $w(\mu_1) = w(\mu_2)$, then the Boolean $\sigma$-algebras $\Aa(\mu_1)$
   and $\Aa(\mu_2)$ are isometrically isomorphic.
\end{enumerate}
The property (M1) says that $X$ may naturally be identified (via the map $\delta_{\mu,X}$) with $\Delta_{\mu}(X)$.
The points (M7) and (M9)--(M11) imply that if $N = M, M^c$ or $M^r$, then $N_{\mu}(X) \cong X^p \times \prod_{j \in J}
N_{\mu_j}(X)$, where $p = n \in \NNN$ if $\mu$ has exactly $n$ atoms and $p = \omega$ if $\mu$ has infinitely many atoms
(if $p = 0$, we omit the factor $X^p$); and $J \subset \NNN$ (if $J$ is empty, we omit the factor $\prod_{j \in J}
N_{\mu_j}(X)$) and the measures $\mu_j$ are probabilistic homogeneous and of different weights. We shall prove
in Section~2 that $M^r_{\lambda}(X)$ is \textit{naturally} homeomorphic to $M^r_{\nu}(X)$ if $\lambda$ and $\nu$ are
homogeneous and of the same weight. We shall also show that the connection (\ref{eqn:sigma-product}) is fulfilled
without assumption of finiteness of $J$.\par
Our next aim is to prove that if $(\Omega_j,\Nn_j,\nu_j)$ for $j = 1,2$ are two measure spaces, then there is a measure
space $(\Omega,\Nn,\nu)$ such that $M_{\nu_1}(M_{\nu_2}(X))$ is \textit{naturally} homeomorphic to $M_{\nu}(X)$ for each
metrizable space $X$. To do this, let $\Omega = \Omega_1 \times \Omega_2$ and $\pi\dd \Omega \to \Omega_2$ be the natural
projection. Let $\Nn$ be the $\sigma$-algebra of all subset $A$ of $\Omega$ such that $\pi(A \cap (\{\omega_1\} \times
\Omega_2)) \in \Nn_2$ for each $\omega_1 \in \Omega_1$ and the function $\Omega_1 \ni \omega_1 \mapsto \pi(A \cap
(\{\omega_1\} \times \Omega_2)) \in \Aa(\nu_2)$ is $\Nn_1$-measurable and its image is separable. Finally, let $\nu\dd
\Nn \to \RRR_+$ be given by $\nu(A) = \int_{\Omega_1} \nu_2(\pi(A \cap (\{\omega_1\} \times \Omega_2)))
\dint{\nu_1(\omega_1)}$. It is easy to see that $\Nn$ is indeed a $\sigma$-algebra and that $\nu$ is a finite measure
on $\Omega$. Note also that $\Nn_1 \otimes \Nn_2 \subset \Nn$ and $\nu$ extends $\nu_1 \otimes \nu_2$. We call $\nu$
the \textit{directed product} of $\nu_1$ and $\nu_2$. It would be quite more reasonable to define $(\Omega,\Nn,\nu)$
as the product space of $(\Omega_1,\Nn_1,\nu_1)$ and $(\Omega_2,\Nn_2,\nu_2)$. However, as we will see in Section~3
(\EXM{notAM}), the product space (as $(\Omega,\Nn,\nu)$ below) does not satisfy the following claim:
\begin{enumerate}[(M1)]\setcounter{enumi}{11}
\item For every bounded metric space $(X,d)$ the map $$\Lambda\dd (M_{\nu}(X),M_{\nu}(d)) \to
   (M_{\nu_1}(M_{\nu_2}(X)),M_{\nu_1}(M_{\nu_2}(d)))$$ given by the formula $(\Lambda f(\omega_1))(\omega_2)
   = f(\omega_1,\omega_2)$ is a well defined (bijective) isometry.
\end{enumerate}
To show that $\im \Lambda \subset M_{\nu_1}(M_{\nu_2}(X))$, use the fact that if $f\dd \Omega \to X$ is $\Nn$-measurable
and $\im f$ is separable, then there is a sequence of $\Nn$-measurable functions $f_n\dd \Omega \to X$ with finite
images such that $\lim_{n\to\infty} f_n(\omega) = f(\omega)$ for each $\omega \in \Omega$. Further, direct calculation
shows that $\Lambda$ is isometric. To see the surjectivity, fix an $\Nn_1$-measurable function $g\dd \Omega_1 \to
M_{\nu_2}(X)$ with separable image. Let $\bar{X}$ be the completion of $X$ with respect to $d$. Since $M^f_{\nu_2}(X)$
is dense in $M_{\nu_2}(X)$, there is a sequence of $\Nn_1$-measurable functions $g_n\dd \Omega_1 \to M^f_{\nu_2}(X)$
with finite images such that $\lim_{n\to\infty} g_n(\omega_1) = g(\omega_1)$ for every $\omega_1 \in \Omega_1$.
It is easy to check that for each $n$ there is an $\Nn$-measurable function $f_n\dd \Omega \to X$ whose image is finite
and such that $f_n(\omega_1,\cdot)$ and $g_n(\omega_1)$ concide in $M_{\nu_2}(X)$ for every $\omega_1 \in \Omega_1$
(in fact, each $f_n$ is $\Nn_1 \otimes \Nn_2$-measurable). Thus (since $\Lambda$ is isometric), $(f_n)_n$ is
a fundamental sequence in $M_{\nu}(\bar{X})$. This means that there is an $\Nn$-measurable function $f\dd \Omega \to
\bar{X}$ with separable image which is the limit of $(f_n)_n$ in $M_{\nu}(\bar{X})$. We conclude from this that
$\bar{\Lambda} f = g$, where $\bar{\Lambda}$ is the suitable map `$\Lambda$' for $\bar{X}$. So, the set $A^1 =
\{\omega_1 \in \Omega_1\dd\ f(\omega_1,\cdot) \neq g(\omega_1) \textup{ in } M_{\nu_2}(X)\}$ belongs to $\Nn_1$
and $\nu_1(A^1) = 0$. Now fix $\omega_1 \in \Omega_1 \setminus A^1$. Let $h\dd \Omega_2 \to X$ be an $\Nn_2$-measurable
function with separable image which coincides with $g(\omega_1)$ in $M_{\nu_2}(\Omega_2)$. Then the set $A_{\omega_1}
= \{\omega_2 \in \Omega_2\dd\ f(\omega_1,\omega_2) \neq h(\omega_2)\}$ belongs to $\Nn_2$ and $\nu_2(A_{\omega_1}) = 0$.
Finally, put $A = (A^1 \times \Omega_2) \cup \bigcup_{\omega_1 \in \Omega_1 \setminus A^1} (\{\omega_1\} \times
A_{\omega_1}) \subset \Omega$ and let $f_*\dd \Omega \to X$ be such that $f_*\bigr|_A = f\bigr|_A$
and $f_*\bigr|_{\Omega \setminus A} \equiv b$, where $b$ is a fixed element of $X$. By the construction, $A \in \Nn$,
$f_* \in M_{\nu}(X)$ and $\Lambda(f_*) = g$.\par
The above defined $\sigma$-algebra $\Nn$ and measure $\nu$ will be denoted by us by $\Nn_1 \stackrel{\to}{\otimes} \Nn_2$
and $\nu_1 \stackrel{\to}{\otimes} \nu_2$, respectively. Since $\Aa(\nu_1 \stackrel{\to}{\otimes} \nu_2)$ is naturally
isometric to $M_{\nu_1 \stackrel{\to}{\otimes} \nu_2}(\{0,1\})$, the presented proof of (M12) (especially
$\Nn_1 \otimes \Nn_2$-measurability of the functions $f_n$) yields that
\begin{enumerate}[(M1)]\setcounter{enumi}{12}
\item For each $A \in \Nn_1 \stackrel{\to}{\otimes} \Nn_2$ there is $A_0 \in \Nn_1 \otimes \Nn_2$ such that
   $(\nu_1 \stackrel{\to}{\otimes} \nu_2)(A \setminus A_0) = (\nu_1 \stackrel{\to}{\otimes} \nu_2)(A_0 \setminus A) = 0$.
   In particular, $\Aa(\nu_1 \stackrel{\to}{\otimes} \nu_2) = \Aa(\nu_1 \otimes \nu_2)$ and if $\nu_1$ and $\nu_2$
   are homogeneous, so is $\nu_1 \stackrel{\to}{\otimes} \nu_2$.
\end{enumerate}
Now we shall give a sufficient condition (on a measure $\mu$) under which the space $Y = M_{\mu}(X)$
is homeomorphic to $Y^{\omega}$ (for each $X$). To formulate it, we need an additional notion. We say that two measure
spaces $(\Omega_1,\Mm_1,\mu_1)$ and $(\Omega_2,\Mm_2,\mu_2)$ are \textit{pointwisely isomorphic} if there is a bijection
$\psi\dd \Omega_1 \to \Omega_2$ such that for any $A \subset \Omega_1$, $\psi(A) \in \Mm_2$ iff $A \in \Mm_1$
and $\mu_2(\psi(A)) = \mu_1(A)$ for every $A \in \Mm_1$. In such a situation $\psi$ is called an \textit{isomorphism}.
These spaces are said to be \textit{almost pointwisely isomorphic} if there are sets $A_1 \in \Mm_1$ and $A_2 \in \Mm_2$
such that $\mu_j(\Omega_j \setminus A_j) = 0\ (j=1,2)$ and the spaces $(A_1,\Mm_1\bigr|_{A_1},\mu_1\bigr|_{A_1})$
and $(A_2,\Mm_2\bigr|_{A_2},\mu_2\bigr|_{A_2})$ are \textit{pointwisely isomorphic}. Basicly, every isomorphism
$\varphi\dd \Omega_1 \to \Omega_2$ induces isometries $(M_{\mu_1}(X),M_{\mu_1}(d)) \ni f \mapsto f \circ \varphi^{-1}
\in (M_{\mu_2}(X),M_{\mu_2}(d))$ for any $X$ and $d \in \Metr(X)$ (the same for $M^f$,$M^c$ and $M^r$-spaces).
We also have:
\begin{enumerate}[(M1)]\setcounter{enumi}{13}
\item If there is a measurable set $A$ such that $0 < \mu(A) < \mu(\Omega)$ and the spaces
   $(\Omega,\Mm,\frac{\mu}{\mu(\Omega)})$ and $(A,\Mm\bigr|_A,\frac{\mu|_A}{\mu(A)})$ are almost pointwisely isomorphic,
   then $M_{\mu}(X) \cong M_{\mu}(X)^{\omega}$ for each metrizable space $X$.
\end{enumerate}
To see this, first of all observe that there are measurable sets $\Omega_0$ and $A_0$ such that $A_0 \subset
A \cap \Omega_0$, $\mu(\Omega \setminus \Omega_0) = \mu(A \setminus A_0) = 0$ and the spaces
$(\Omega_0,\Mm\bigr|_{\Omega_0},\mu\bigr|_{\Omega_0})$ and $(A_0,\Mm\bigr|_{A_0},\mu\bigr|_{A_0})$ are pointwisely
isomorphic. (Indeed, if $\tau\dd \Omega_1 \to A_1$ is an isomorphism, where $\Omega_1 \subset \Omega$ and $A_1 \subset A$
are measurable and $\mu(\Omega \setminus \Omega_1) = \mu(A \setminus A_1) = 0$, then for $n \geqsl 2$ put $A_n =
A_{n-1} \cap \Omega_{n-1}$ and $\Omega_n = \tau^{-1}(A_n)$ and finally $A_0 = \bigcap_{n=1}^{\infty} A_n$ and $\Omega_0
= \bigcap_{n=1}^{\infty} \Omega_n$.) Since the maps $(M_{\mu}(X),M_{\mu}(d)) \ni f \mapsto f\bigr|_{\Omega_0} \in
(M_{\mu|_{\Omega_0}}(X),M_{\mu|_{\Omega_0}}(d))$ and $(M_{\mu|_A}(X),M_{\mu|_A}(d)) \ni f \mapsto
f\bigr|_{A_0} \in (M_{\mu|_{A_0}}(X),M_{\mu|_{A_0}}(d))$ are (bijective) isometries for every bounded metric
space $(X,d)$, we may assume that $\Omega_0 = \Omega$ and $A_0 = A$. Let $\varphi\dd \Omega \to A$ be an isomorphism.
For a moment we will think of $\varphi$ as of a function from $\Omega$ to $\Omega$. Let $B_0 = \Omega \setminus A$
and $B_n = \varphi^n(B_0)\ (n \geqsl 1)$, where $\varphi^n$ denotes the $n$-th iterate of $\varphi$. Note that
$\{B_n\}_{n=0}^{\infty}$ is a $\mu$-partition of $B = \bigcup_{n=0}^{\infty} B_n$. What is more, $\varphi(\Omega
\setminus B) = \Omega \setminus B$. But $\frac{\mu(\Omega \setminus B)}{\mu(\Omega)} = \frac{\mu(\varphi(\Omega
\setminus B))}{\mu(A)}$ and thus $\mu(\Omega \setminus B) = 0$. Therefore, as before, we may assume that $B = \Omega$.
Since $\varphi(B_n) = B_{n+1}$, all the spaces $(B_n,\Mm\bigr|_{B_n},\mu\bigr|_{B_n})$ are pointwisely isomorphic.
Take a bijection $\kappa\dd \NNN \times \NNN \to \NNN$ and for each $n,l \in \NNN$ let $\psi_{l,n}\dd B_n \to
B_{\kappa(l,n)}$ be an isomorphism. Finally, for a metrizable space $X$ put $h\dd M_{\mu}(X) \ni f \mapsto
(\bigcup_{n=0}^{\infty} [f\bigr|_{B_{\kappa(l,n)}} \circ \psi_{l,n}])_{l=0}^{\infty} \in M_{\mu}(X)^{\omega}$.
We leave this as a simple exercise that $h$ is a homeomorphism.\par
The point (M14) will be applied in Section~2. We shall end the section with the two more properties of spaces
of measurable functions. Recall that a metrizable space $X$ has the \textit{reflective isotopy property} (in short: RIP)
if there is an ambient invertible isotopy $H\dd X \times X \times I \to X \times X$ such that $H(x,y,0) = (x,y)$
and $H(x,y,1) = (y,x)$ (that is, $H$ needs to be such a homotopy that for each $t \in I$, the map $h_t(x,y) =
H(x,y,t)$ is a homeomorphism of $X \times X$ and the function $(x,y,t) \mapsto h_t^{-1}(x,y)$ is continuous) (compare
\cite[Definition~IX.2.1]{be-pe}). There are other definitions of RIP (see \cite{wong},\cite{west}), all `invertible'
versions of it are however equivalent for spaces $X$ such that $X \cong X^{\omega}$. (M6) implies that:
\begin{enumerate}[(M1)]\setcounter{enumi}{14}
\item If $\mu$ is nonatomic, then the space $N_{\mu}(X)$ has RIP for each metrizable $X$ and $N = M,M^f,M^c,M^r$.
\end{enumerate}
Indeed, if $\{A_t\}_{t \in I}$ is as in (M5), then the map $$H(f,g,t) = (f\bigr|_{\Omega \setminus A_t} \cup
g\bigr|_{A_t},g\bigr|_{\Omega \setminus A_t} \cup f\bigr|_{A_t})$$ is an isotopy we searched for.\par
Following Toru\'{n}czyk\cite{torunczyk}, we say that a closed subset $K$ of a metrizable space $X$ is a \textit{$Z$-set}
if the set $\CCc(Q,X \setminus K)$, where $Q$ is the Hilbert cube, is dense in $\CCc(Q,X)$ in the topology of uniform
convergence. (This definition differs from the original one by Anderson\cite{anderson}, but both these definitions
are equivalent in ANR's.) Countable unions of $Z$-sets are called \textit{$\sigma$-$Z$-sets}. The last property
established in this section, which shall be used in Section~5, is
\begin{enumerate}[(M1)]\setcounter{enumi}{15}
\item Let $\mu$ be nonatomic. If $X$ has more than one point, then $\Delta_{\mu}(X)$ is a $Z$-set in $M_{\mu}(X)$.
If $X$ is infinite, the set $M^f_{\mu}(X)$ is a $\sigma$-$Z$-set in $M_{\mu}(X)$.
\end{enumerate}
We shall prove only the second claim (the first one has similar proof). Let $\{A_t\}_{t \in I}$ be as in (M5).
It is easy to see that for each $n$ the set of all measurable functions whose images have at most $n$ elements is
closed in $M_{\mu}(X)$ and thus $M^f_{\mu}(X)$ is of type $\FFf_{\sigma}$. What is more, there is $u \in M_{\mu}(X)$
such that $u\bigr|_{A_t} \notin M^f_{\mu|_{A_t}}(X)$ for each $t \in I$. Now if $F\dd Q \to M_{\mu}(X)$ is continuous,
then the maps $F_n\dd Q \ni x \mapsto u\bigr|_{A_{1/n}} \cup F(x)\bigr|_{\Omega \setminus A_{1/n}} \in M_{\mu}(X)$
converge uniformly to $F$ and have images disjoint from $M^f_{\mu}(X)$.

\SECT{$M^r$-spaces}

At the beginning we shall study certain spaces of measurable functions.\par
Fix an infinite cardinal number $\alpha$. Each of the sets $I^J$, where $J$ is countable (infinite), will be equipped
with the Tichonov topology. Let $T$ be a set of cardinality $\alpha$. Let $\Omega_{\alpha} = I^T\ (= I^{\alpha})$
and $\Mm_{\alpha}$ be the $\sigma$-algebra of all subsets $B$ of $\Omega_{\alpha}$ for which there are a countable
infinite set $J \subset T$ and $B_0 \in \Bb(I^J)$ such that $B = \{(x_t)_{t \in T}\dd\ (x_j)_{j \in J} \in B_0\}$.
In other words, $\Mm_{\alpha}$ is the product of $\alpha$ copies of $\Bb(I)$. (Note also that, when consider
$\Omega_{\alpha}$ with the Tichonov topology, not every open subset of $\Omega_{\alpha}$ is a member of $\Mm_{\alpha}$.
Open sets which are measurable are exactly those which are $\FFf_{\sigma}$.) Finally, let $m_{\alpha}\dd \Mm_{\alpha}
\to I$ be the product measure of $\alpha$ copies of the Lebesgue measure $m$ on $I$. The following is well known:
\begin{enumerate}[(M1)]\setcounter{enumi}{16}
\item The measure $m_{\alpha}$ is homogeneous and $w(m_{\alpha}) = \alpha$. The measure spaces $(I,\Bb(I),m)$
   and $(\Omega_{\aleph_0},\Mm_{\aleph_0},m_{\aleph_0})$ are pointwisely isomorphic.
\end{enumerate}
We need to know a little bit more about the space $(\Omega_{\alpha},\Mm_{\alpha},m_{\alpha})$. But first a few
necessary definitions.\par
A \textit{Polish} space is a separable completely metrizable one. A subset $B$ of a Polish space $Y$ is said to be
\textit{absolutely measurable in $Y$} if for every probabilistic Borel measure $\mu$ on $Y$ there are two Borel subsets
$A$ and $C$ of $Y$ such that $A \subset B \subset C$ and $\mu(C \setminus A) = 0$. A separable metrizable space $X$
is \textit{absolutely measurable}, if for every embedding $\varphi$ of $X$ into the Hilbert cube $Q$, $\varphi(X)$
is absolutely measurable in $Q$. Equivalently, $X$ is absolutely metrizable if there is $d \in \Metr(X)$ such that $X$
is absolutely measurable in the completion of $(X,d)$.\par
A (separable) \textit{Souslin space} is the empty space or a continuous image of the space of all irrational
numbers; or, equivalently, it is a continuous image of some Polish space. The following are important for us
properties of Souslin spaces:
\begin{enumerate}[(So1)]
\item the image of a Borel function defined on a Borel subset of a Polish space is a Souslin space,
\item every Souslin space is absolutely measurable (compare with \cite[Theorem XIII.4.1]{k-m}).
\end{enumerate}
It is a kind of folklore that every finite Borel measure on a Polish space is regular, i.e. it is supported
on a $\sigma$-compact subset of the whole space. This implies that every finite Borel measure on a (separable)
absolutely measurable space is also supported on a $\sigma$-compact set.\par
All the above facts yield the following result.

\begin{lem}{AMimage}
Let $(\Omega,\Mm,\mu)$ be a finite measure space and let $X$ be a metrizable space.
\begin{enumerate}[\upshape(A)]
\item If the image of an $\Mm$-measurable function $f\dd \Omega \to X$ is contained in a separable absolutely
   measurable subset of $X$, then $f \in M^r_{\mu}(X)$.
\item If $\Nn$ is a $\sigma$-subalgebra of $\Mm$, $\nu = \mu\bigr|_{\Nn}$ and a function $f \in M^r_{\mu}(X)$ belongs
   to the closure of $M_{\nu}(X)$, then $f \in M^r_{\nu}(X)$, i.e. there is an $\Nn$-measurable function $g\dd \Omega
   \to X$ whose image is separable and which is $\mu$-almost everywhere equal to $f$. In particular, $M^r_{\nu}(X)$ is
   closed in $M^r_{\mu}(X)$ and $M^r_{\nu}(X) = M^r_{\bar{\nu}}(X)$, where $\bar{\nu} = \mu\bigr|_{\bar{\Nn}}$
   and $\bar{\Nn}$ consists of those $A \in \Mm$ for which there is $B \in \Nn$ with $\mu(A \setminus B) = \mu(B \setminus
   A) = 0$.
\end{enumerate}
\end{lem}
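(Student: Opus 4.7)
The plan is to establish (A) by a pushforward-measure argument and (B) by passing to a subsequence converging pointwise $\mu$-almost everywhere and building a limit in the completion of $X$; the ``in particular'' assertions then fall out of (B).

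For (A), let $Y$ be the separable absolutely measurable subset of $X$ containing $\im f$. Viewed as a map into $Y$ (a legitimate view, because Borel subsets of $Y$ are traces of Borel subsets of $X$), $f$ is still $\Mm$-measurable, so the pushforward $\mu_f = \mu \circ f^{-1}$ is a finite Borel measure on $Y$. By the fact noted just before the lemma (every finite Borel measure on a separable absolutely measurable space is supported on a $\sigma$-compact set), there exists a $\sigma$-compact $K \subset Y \subset X$ with $\mu(f^{-1}(Y \setminus K)) = 0$. Modifying $f$ on this $\mu$-null set to take a fixed value in $K$ produces a representative whose image lies in $K$, hence $f \in M^r_{\mu}(X)$.

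For (B), pick a sequence $(f_n) \subset M_{\nu}(X)$ with $f_n \to f$ in $M_{\mu}(X)$; by the very definition of convergence in $M_{\mu}(X)$ we may, after passing to a subsequence, assume $f_n \to f$ pointwise on $\Omega \setminus E$ for some $\mu$-null $E \in \Mm$. Fix $d \in \Metr(X)$ and let $\bar{X}$ denote the completion of $(X,d)$. The set
\[
C = \bigcap_{k \geqsl 1} \bigcup_{N \geqsl 1} \bigcap_{m,n \geqsl N}\{\omega \in \Omega \dd d(f_n(\omega),f_m(\omega)) < 1/k\}
\]
belongs to $\Nn$, since each $f_n$ is $\Nn$-measurable with separable image, and hence the distance functions $\omega \mapsto d(f_n(\omega),f_m(\omega))$ are $\Nn$-measurable. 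Define an $\Nn$-measurable $g\dd \Omega \to \bar{X}$ by $g(\omega) = \lim_n f_n(\omega)$ on $C$ and $g(\omega) = x_0$ on $\Omega \setminus C$, where $x_0$ is a fixed point of a $\sigma$-compact $K \subset X$ containing the image of some representative of $f$ (which exists since $f \in M^r_{\mu}(X)$). Then $\Omega \setminus E \subset C$ and $g = f$ on $\Omega \setminus E$, hence $g = f$ $\mu$-almost everywhere. Since $K$ is $F_{\sigma}$ in $\bar{X}$, the set $B = g^{-1}(K)$ lies in $\Nn$ and satisfies $\mu(\Omega \setminus B) = 0$; redefining $g$ to equal $x_0$ off $B$ yields the desired $\Nn$-measurable representative of $f$ with image in the $\sigma$-compact set $K \subset X$.

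The closedness of $M^r_{\nu}(X)$ in $M^r_{\mu}(X)$ is then immediate: a limit in $M^r_{\mu}(X)$ of functions from $M^r_{\nu}(X)$ lies in the closure of $M_{\nu}(X)$, so (B) applies. For the equality $M^r_{\nu}(X) = M^r_{\bar{\nu}}(X)$, the inclusion $\subset$ is obvious; conversely, every element of $M^r_{\bar{\nu}}(X)$ is a limit in measure of $\bar{\Nn}$-measurable simple functions, each of which may be replaced by an $\Nn$-measurable simple function by substituting each $A \in \bar{\Nn}$ with some $B \in \Nn$ satisfying $\mu(A \triangle B) = 0$. Hence every such $f$ lies in the closure of $M_{\nu}(X)$, and (B) gives $f \in M^r_{\nu}(X)$. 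The main obstacle is the bookkeeping in (B): one must ensure that the Cauchy set $C$, the preimage $B = g^{-1}(K)$, and the eventual modification of $g$ all remain within the coarser $\sigma$-algebra $\Nn$, while still landing in a $\sigma$-compact subset of $X$ itself rather than merely of its completion.
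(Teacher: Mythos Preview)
Your argument is correct and follows essentially the same route as the paper's: for (A) both use the pushforward measure and inner regularity on absolutely measurable spaces; for (B) both pass to a pointwise a.e.\ convergent sequence of $\Nn$-measurable approximants, form the Cauchy set in $\Nn$, take the limit in a completion, and then pull back the $\sigma$-compact set $K$ through the limit function to land back in $X$. The only cosmetic differences are that the paper takes the approximants $f_n$ with finite images and completes the $\sigma$-compact set $K=K_0\cup\bigcup_n\im f_n$ rather than all of $X$; your choice to complete $X$ works just as well once you note, as you do, that $K$ is $F_\sigma$ (hence Borel) in $\bar X$ so that $g^{-1}(K)\in\Nn$.
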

\begin{proof}
(A): Let $A \subset X$ be a separable absolutely measurable superset of $\im f$. Let $\lambda\dd \Bb(A) \ni B \mapsto
\mu(f^{-1}(B)) \in \RRR_+$. Since $\lambda$ is a finite measure, there is a $\sigma$-compact subset $K$ of $A$ such that
$\lambda(K) = \lambda(A)$. So, $\mu(U) = \mu(\Omega)$ for $U = f^{-1}(K)$. Then $f$ coincides with $\hat{f} \in
M^r_{\mu}(X)$ in $M_{\mu}(X)$, where $\hat{f}\bigr|_U = f\bigr|_U$ and $\hat{f}\bigr|_{\Omega \setminus U} \equiv b$
with $b$ taken from $K$.\par
(B): We only need to prove the first claim. We may assume that the image of $f$ is contained in a $\sigma$-compact
subset of $X$, say $K_0$. By the assumption, there is a sequence of $\Nn$-measurable functions $f_n\dd \Omega \to X$
with finite images which is pointwisely convergent $\mu$-almost everywhere to $f$. Let $K = K_0 \cup \bigcup_n \im f_n$.
Fix $d \in \Metr(K)$ and let $\bar{K}$ be the completion of $(K,d)$. Note that $K$ is $\sigma$-compact and therefore
it is a Borel subset of $\bar{K}$. Let $B$ be the set of all those $\omega \in \Omega$ such that the sequence
$(f_n(\omega))_n$ is convergent in $\bar{K}$. Since $f_n$'s are $\Nn$-measurable, $B \in \Nn$. What is more,
$\mu(\Omega \setminus B) = 0$. Thus, after changing each $f_n$ so that $f_n\bigr|_{\Omega \setminus B} \equiv b$,
where $b \in K$, there is an $\Nn$-measurable function $\bar{g}\dd \Omega \to \bar{K}$ such that $\lim_{n\to\infty}
f_n(\omega) = \bar{g}(\omega)$ for each $\omega$ and $\bar{g}$ is equal to $f$ in $M_{\mu}(\bar{K})$. This yields that
the set $C = \bar{g}^{-1}(K)$ belongs to $\Nn$ and $\mu(\Omega \setminus C) = 0$. Therefore, to end the proof,
it suffices to put $g = \bar{g}\bigr|_C$ and $g\bigr|_{\Omega \setminus C} \equiv b$.
\end{proof}

As we shall see in the next section (\EXM{notAM}), all claims of the point (B) of the above lemma fail when we replace
each $M^r$ by $M$.\par
For a metrizable space $X$ let $M(X) = M_m(X)$ and for an infinite cardinal $\alpha$, let $M_{\alpha}(X) =
M_{m_{\alpha}}(X)$ (analogous notation for metrics). The second claim of (M17) yields that $M_{\aleph_0}(X) \cong M(X)$.

\begin{thm}{M-r}
For every infinite cardinal number $\alpha$ and each metrizable space $X$, $M_{\alpha}(X) = M^r_{m_{\alpha}}(X)$.
\end{thm}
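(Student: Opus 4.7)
The inclusion $M^r_{m_\alpha}(X) \subset M_{m_\alpha}(X) = M_\alpha(X)$ is immediate from the definitions, so the content is the reverse inclusion $M_\alpha(X) \subset M^r_{m_\alpha}(X)$. Fix $f \in M_\alpha(X)$, i.e.\ an $\Mm_\alpha$-measurable function $f\dd \Omega_\alpha \to X$ with separable image. By \LEM{AMimage}(A), it will be enough to find a separable absolutely measurable subset of $X$ containing $\im f$; in fact the plan produces a Souslin one.

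The main step is to factor $f$ through countably many coordinates. Since $\im f$ is separable metrizable, pick open sets $\{V_n\}_{n\in\NNN}$ of $X$ such that $\{V_n \cap \im f\}$ is a basis for the topology of $\im f$. By the definition of $\Mm_\alpha$, each $f^{-1}(V_n)$ depends on only countably many coordinates; let $J_n \subset T$ witness this, and put $J = \bigcup_n J_n$. Then $J$ is countable and, after rewriting each cylinder on $J_n$ as a cylinder on the larger set $J$, every preimage $f^{-1}(V_n)$ takes the form $\pi_J^{-1}(C_n)$ with $C_n \in \Bb(I^J)$, where $\pi_J\dd I^T \to I^J$ is the canonical projection.

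Because $\{V_n\cap\im f\}$ is a basis (hence separates the points) of $\im f$ and $f^{-1}(V_n) = \pi_J^{-1}(C_n)$ for each $n$, the equality $\pi_J(x) = \pi_J(y)$ forces $f(x) = f(y)$; indeed, were $f(x) \neq f(y)$, some $V_n$ would contain exactly one of them, contradicting the cylinder description of $f^{-1}(V_n)$. Consequently $f = g \circ \pi_J$ for a uniquely determined map $g\dd I^J \to X$, with $\im g = \im f$. Moreover $g^{-1}(V_n) = C_n$ is Borel for every $n$, so $g$ is a Borel map defined on the Polish space $I^J$. Property \textup{(So1)} then yields that $\im g$ is a Souslin space, and \textup{(So2)} that it is absolutely measurable. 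Hence $\im f = \im g$ is a separable absolutely measurable subset of $X$, and \LEM{AMimage}(A) gives $f \in M^r_{m_\alpha}(X)$, as required.

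The principal subtlety is the coordinate-factorization step: the chosen family $\{V_n\}$ must simultaneously separate the points of $\im f$ and be indexed by $\NNN$ so that the union $J = \bigcup_n J_n$ remains countable. Taking $\{V_n\}$ so that its traces form a basis of $\im f$ (available because $\im f$ is separable metrizable) accomplishes both at once, and the rest is a transparent combination of \LEM{AMimage}(A) with the descriptive-set-theoretic facts \textup{(So1)} and \textup{(So2)}.
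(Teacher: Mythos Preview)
Your argument is correct and follows the same strategy as the paper: factor $f$ through the projection onto a countable set of coordinates to get a Borel map $g$ on the Polish space $I^J$, then apply (So1), (So2) and \LEM{AMimage}(A). The only cosmetic difference is that the paper obtains the countable $J$ via approximation by $\Mm_\alpha$-measurable simple functions, whereas you obtain it from a countable basis of $\im f$; both are standard routes to the same factorization.
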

\begin{proof}
We assume that $\Omega_{\alpha} = I^T$. Let $u\dd \Omega_{\alpha} \to X$ be $\Mm_{\alpha}$-measurable with separable
image. Since $u$ is the pointwise limit of a sequence of $\Mm_{\alpha}$-measurable functions with finite images,
we conclude from this that there is a countable infinite set $J \subset T$ such that $u(x) = u(y)$ whenever $x$ and $y$
are elements of $\Omega_{\alpha}$ such that $p_J(x) = p_J(y)$, where $p_J\dd I^T \to I^J$ is the natural projection.
This means that there is a Borel function $v\dd I^J \to X$ such that $u = v \circ p_J$. Let $S = \im v = \im u$.
By (So1) and (So2), $S$ is absolutely measurable and thus \LEM{AMimage}--(A) finishes the proof.
\end{proof}

The argument used in the proof of the above theorem shows also that $M(X) = M^r_m(X)$.\par
Following Schori\cite{schori}, we say that a space $Y$ is an \textit{S-space} if there are an element $\theta \in Y$
and a map $f\dd Y \times I \to Y$ such that:
\begin{enumerate}[(S1)]
\item $f(x,0) = \theta$, $f(x,1) = x$, $f(\theta,t) = \theta$ for each $x \in Y$ and $t \in I$,
\item for every neighbourhood $U$ of $\theta$ in $Y$ there is $t \in (0,1]$ such that $f(Y \times [0,t]) \subset U$,
\item the map $Y \times (0,1] \ni (x,t) \mapsto (f(x,t),t) \in Y \times (0,1]$ is an embedding,
\item $f(f(x,t),s) = f(x,ts)$ for each $t,s \in I$ and $x \in Y$.
\end{enumerate}

\begin{thm}{s-space}
For every infinite cardinal number $\alpha$ and each non\-empty metrizable space $X$, $M_{\alpha}(X)$ is an $S$-space.
\end{thm}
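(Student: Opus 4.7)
My plan is to first give an explicit $S$-space structure on the model space $M(X) = M_m(X)$ by scaling the unit interval, and then transfer it to $M_{\alpha}(X)$ for arbitrary infinite $\alpha$ via the natural isometry $M_{\alpha}(X) \cong M_{\alpha}(M(X))$ supplied by Maharam's theorem (M11) and the directed-product construction from Section~1. For the model case, fix $b \in X$, put $\theta_0 = \delta_{m,b} \in M(X)$, and define
$$f_0(g,t)(\omega) = \begin{cases} g(\omega/t) & \text{if } \omega < t, \\ b & \text{if } \omega \geqsl t, \end{cases}$$
extended by $f_0(g,0) \equiv b$. Axiom (S1) is immediate. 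For (S4), the set identity $\{\omega < st\} = \{\omega < s\} \cap \{\omega/s < t\}$ (valid since $s, t \leqsl 1$) gives $f_0(f_0(g,t), s) = f_0(g, st)$. For $d \in \Metr(X)$ with $d \leqsl 1$, the substitution $u = \omega/t$ yields $M_m(d)(f_0(g,t), \theta_0) = t \cdot M_m(d)(g, \theta_0) \leqsl t$, giving (S2) uniformly in $g$. For (S3), the map $\psi_t := f_0(\cdot, t)$ has for $t > 0$ the left-inverse $\psi_t^{-1}(h)(u) = h(tu)$ with Lipschitz constant $t^{-1}$; continuity of $f_0$ and of the inverse then follow from the standard $L^1$-continuity of the scaling $s \mapsto h(s\cdot)$ for fixed $h \in M(X)$.

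For general $\alpha$, (M12) gives that $M_{\alpha}(M(X))$ is isometric to $M_{m_{\alpha} \stackrel{\to}{\otimes} m}(X)$; by (M13) and Maharam's theorem, $\Aa(m_{\alpha} \stackrel{\to}{\otimes} m) = \Aa(m_{\alpha} \otimes m) \cong \Aa(m_{\alpha})$ as metric Boolean $\sigma$-algebras, since $m_{\alpha} \otimes m$ is homogeneous of weight $\alpha$. Because $M_{\mu}(X)$ depends only on $\Aa(\mu)$, we obtain an isometry $\Xi\dd M_{\alpha}(X) \to M_{\alpha}(M(X))$. To transport the $S$-structure, define $F(G, t)(\omega) = f_0(G(\omega), t)$ with basepoint $\Theta = \delta_{m_{\alpha}, \theta_0}$. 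Axioms (S1) and (S4) are inherited pointwise; integrating the model estimate over $\Omega_{\alpha}$ gives $M_{\alpha}(M_m(d))(F(G,t), \Theta) \leqsl t$, hence (S2). Injectivity of $F(\cdot, t)$ for $t > 0$ is fiberwise; continuity of $F$ and of its inverse on the relevant image follow from the model-case facts combined with a routine dominated-convergence argument, yielding (S3). Pulling $F$ back along $\Xi$ produces the required $S$-structure on $M_{\alpha}(X)$.

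The substantive step is (S3): one must show that $(G,t) \mapsto (F(G,t), t)$ is an embedding on $M_{\alpha}(M(X)) \times (0, 1]$, which fails at $t = 0$ because the Lipschitz constant of $\psi_t^{-1}$ blows up as $t \to 0$. The key analytic input is the $L^1$-continuity of scaling in $M(X)$, which degenerates at the origin; this is precisely why (S3) is required only on the open half of $I$, and is the one axiom whose verification is not purely formal. Everything else reduces to the measure-theoretic bookkeeping provided by (M11)--(M13).
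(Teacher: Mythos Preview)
Your approach is essentially sound but contains one genuine misstatement and is considerably more involved than necessary.

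The problematic sentence is ``Because $M_{\mu}(X)$ depends only on $\Aa(\mu)$, we obtain an isometry $\Xi\dd M_{\alpha}(X) \to M_{\alpha}(M(X))$.'' As stated this is false: \EXM{notAM} in the paper exhibits two measures with identical Boolean algebras but different $M$-spaces. What \emph{is} true is that $M^r_{\mu}(X)$ depends only on $\Aa(\mu)$ (this is \THM{isom}, which appears \emph{after} the present theorem but does not depend on it). Your isometry $\Xi$ can be salvaged, but you must invoke \THM{M-r} to identify $M_{\alpha}(X)$ with $M^r_{m_{\alpha}}(X)$, then (M13) together with \LEM{AMimage}--(B) to identify $M^r_{m_{\alpha} \stackrel{\to}{\otimes} m}(X)$ with $M^r_{m_{\alpha} \otimes m}(X)$, and finally \THM{isom} (or the pointwise isomorphism of the product spaces) to close the chain. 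None of this is hard, but it is a substantial detour, and the offending sentence as written is simply wrong.

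The paper's proof bypasses all of this by observing that $\Omega_{\alpha} = I^T$ already contains a distinguished factor of $I$: fix $\xi \in T$, write $\Omega_{\alpha} = I^{T \setminus \{\xi\}} \times I$, and perform your scaling construction on that single coordinate. Explicitly, for $t \in (0,1]$ set $\kappa_t(x,s) = (x,s/t)$ on $I^{T \setminus \{\xi\}} \times [0,t]$ and define $f(u,t) = (u \circ \kappa_t) \cup \theta\bigr|_{I^{T \setminus \{\xi\}} \times (t,1]}$. This is literally your map $f_0$ applied to the $\xi$-th coordinate, with the remaining coordinates carried along as spectators. The verification of (S1)--(S4) is then identical to your model case, and no transfer via Maharam or \THM{isom} is needed at all.

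So: your two-stage strategy (model case plus transfer) is valid once the transfer is justified correctly, but the paper's one-line observation that $I^T$ has a built-in copy of $I$ makes the transfer step entirely superfluous.
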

\begin{proof}
As usual, we assume that $\Omega_{\alpha} = I^T$. Fix $\xi \in T$ and $a \in X$ and put $\Omega =
I^{T \setminus \{\xi\}}$, $\theta = \delta_{m_{\alpha},a}$ and $Y = M_{m_{\alpha}}(X)$. We shall identify
$\Omega_{\alpha}$ with $\Omega \times I$. For every $t \in (0,1]$ let $\kappa_t\dd \Omega \times [0,t] \ni (x,s) \mapsto
(x,s/t) \in \Omega \times I$. Finally, let $f\dd Y \times I \ni (u,t) \mapsto (u \circ \kappa_t) \cup
\theta\bigr|_{\Omega \times (t,1]} \in Y$. It is not too difficult to show that $f$ is continuous. What is more,
$f$ satisfies the axioms (S1)--(S4), which finishes the proof.
\end{proof}

It is easily seen that $\mu = m_{\alpha}$ satisfies the assumption of (M14) (for example, look at $\kappa_{1/2}$
defined in the foregoing proof). So, \THM{M-r}, \THM{s-space}, (M14), (M15) and the results of Schori\cite{schori}
imply

\begin{cor}{manifold}
Let $Y = M_{\alpha}(X)$. Then $Y \cong Y^{\omega}$ and for every metrizable manifold $U$ modelled on $Y$, $U \times Y$
is homeomorphic to $U$.
\end{cor}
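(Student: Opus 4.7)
The plan is to assemble the previously established facts explicitly named in the statement. The first assertion $Y \cong Y^{\omega}$ will come from (M14); the second, $Y$-stability of $Y$-manifolds modelled on $Y$, will come from Schori's stability theorem \cite{schori} once we feed in the three hypotheses supplied by \THM{s-space}, (M15), and the first assertion itself.

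I would first verify the hypothesis of (M14) with $\mu = m_{\alpha}$. Fix a coordinate $\xi \in T$ and identify $\Omega_{\alpha} = I^T$ with $\Omega \times I$, where $\Omega = I^{T \setminus \{\xi\}}$, exactly as in the proof of \THM{s-space}. Set $A = \Omega \times [0,1/2] \in \Mm_{\alpha}$, so $m_{\alpha}(A) = 1/2 \in (0,1)$. Define $\psi \dd \Omega \times I \to A$ by $\psi(x,s) = (x, s/2)$; this is a bijection which carries $\Mm_{\alpha}$ onto $\Mm_{\alpha}\bigr|_A$. On each product box $\psi$ halves the measure, so by uniqueness of the product measure $m_{\alpha}(\psi(B)) = m_{\alpha}(B)/2$ for every $B \in \Mm_{\alpha}$. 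Consequently the probabilistic spaces $(\Omega_{\alpha},\Mm_{\alpha},m_{\alpha})$ and $(A, \Mm_{\alpha}\bigr|_A, 2 m_{\alpha}\bigr|_A)$ are (even) pointwisely isomorphic, and (M14) immediately delivers $Y \cong Y^{\omega}$.

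For the stability statement, the case $\card X \leqsl 1$ being trivial, assume $X$ has more than one point. Since $m_{\alpha}$ is nonatomic, (M15) gives that $Y$ has RIP, and \THM{s-space} asserts that $Y$ is an $S$-space. Together with $Y \cong Y^{\omega}$ from the previous step, these are precisely the ingredients required by Schori's stability theorem \cite{schori} for manifolds modelled on an $S$-space, which yields $U \times Y \cong U$ for every metrizable manifold $U$ modelled on $Y$.

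The main obstacle is essentially bookkeeping: selecting the correct measurable set $A$ to feed (M14), and confirming that Schori's stability theorem admits the precise bundle of properties we have at hand. Both checks are light, since \THM{M-r} and \THM{s-space} already do the real work of describing $Y$ as an $S$-space of the sort Schori's machinery can digest.
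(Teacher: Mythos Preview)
Your proposal is correct and follows essentially the same route as the paper: the paper simply remarks that $\mu = m_{\alpha}$ satisfies the hypothesis of (M14) via the map $\kappa_{1/2}$ from the proof of \THM{s-space} (your $\psi$ is precisely $\kappa_{1/2}^{-1}$), and then invokes \THM{s-space}, (M14), (M15) and Schori's results to conclude. Your write-up is just a more explicit unpacking of that same argument.
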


Before we prove the main result of this section, let us show the following

\begin{pro}{product}
Let $\mu$ be any measure (defined on a $\sigma$-algebra of subsets of $\Omega$) and $X_0,X_1,X_2,\ldots$ be an infinite
sequence of metrizable spaces. Let $J = \NNN$ and $X$ and $\Psi$ be as in \textup{(M8)}. Then $\Psi(M^r_{\mu}(X)) =
\prod_{j \in J} M^r_{\mu}(X_j)$. In other words, for any sequence $(f_n)_{n=0}^{\infty}$ such that $f_n \in M^r_{\mu}(X_n)$
there is $g \in M^r_{\mu}(\prod_{n \in \NNN} X_n)$ such that $(f_n(\omega))_{n=0}^{\infty} = g(\omega)$ for $\mu$-almost
all $\omega \in \Omega$.
\end{pro}
\begin{proof}
Let $f\dd \Omega \ni \omega \mapsto (f_n(\omega))_{n=0}^{\infty} \in \prod_{n \in \NNN} X_n$. Let $\bar{X}_n$ be
the completion of $(X_n,d_n)$, where $d_n$ is a fixed metric on $X_n$. Let $K_n$ be a $\sigma$-compact subset of $X_n$
such that $\im f_n \subset K_n$. Then $K_n \in \Bb(\bar{X}_n)$ and thus $K = \prod_{n=0}^{\infty} K_n \in
\Bb(\prod_{n=0}^{\infty} \bar{X}_n)$. So, $K$ is absolutely measurable and $\im f \subset K$. Now it remains
to apply \LEM{AMimage}--(A).
\end{proof}

And now the main result of the section.

\begin{thm}{isom}
Let $(\Omega_1,\Mm_1,\mu_1)$ and $(\Omega_2,\Mm_2,\mu_2)$ be two nonatomic measure spaces such that $\Aa(\mu_1)$
and $\Aa(\mu_2)$ are isometrically isomorphic. Let $\Phi\dd \Aa(\mu_1) \to \Aa(\mu_2)$ be an isometric isomorphism
of Boolean algebras. Then for every metrizable $X$ there is a unique homeomorphism $H\dd M^r_{\mu_1}(X) \to M^r_{\mu_2}(X)$
such that for each function $f \in M^c_{\mu_1}(X)$ there is a function $g \in M^c_{\mu_2}(X)$ such that $g = H(f)$, $\im g
= \im f$ and $g^{-1}(\{x\}) = \Phi(f^{-1}(\{x\}))$ in $\Aa(\mu_2)$ for every $x \in X$. What is more, $H(\delta_{\mu_1,x})
= H(\delta_{\mu_2,x})$ for each $x \in X$; and for any $d \in \Metr(X)$, $H$ is an isometry with respect to the metrics
$M_{\mu_1}(d)$ and $M_{\mu_2}(d)$.
\end{thm}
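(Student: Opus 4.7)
The plan is to define $H$ first on the dense subspace $M^c_{\mu_1}(X)$ using $\Phi$ applied directly to level sets, verify the required isometry property there, and then extend uniquely to $M^r_{\mu_1}(X)$ by uniform continuity. The key observation making this legitimate is that any isometric Boolean-algebra isomorphism of measure algebras is automatically $\sigma$-continuous: for a pairwise disjoint family $\{A_n\}$ in $\Aa(\mu_1)$ the partial joins $\bigvee_{n\leqsl N} A_n$ converge in the measure metric to $\bigvee_n A_n$, and combining continuity of $\Phi$ with preservation of finite joins forces $\Phi(\bigvee_n A_n)=\bigvee_n\Phi(A_n)$. Given $f\in M^c_{\mu_1}(X)$ with image $\{x_n\}_{n\in J}$ and level sets $A_n=f^{-1}(\{x_n\})$, I set $H(f)$ to be the element of $M^c_{\mu_2}(X)$ determined by the prescription ``value $x_n$ on (any disjoint representatives of) $\Phi(A_n)$''; the resulting $\mu_2$-equivalence class is well defined, satisfies $\im H(f)=\im f$ and $H(f)^{-1}(\{x_n\})=\Phi(A_n)$ in $\Aa(\mu_2)$, and trivially $H(\delta_{\mu_1,x})=\delta_{\mu_2,x}$.

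For any $d\in\Metr(X)$ and $f,f'\in M^c_{\mu_1}(X)$ with level partitions $\{A_n\}$, $\{A'_m\}$ and values $x_n$, $x'_m$, one has
$$M_{\mu_1}(d)(f,f')=\sum_{n,m}\mu_1(A_n\cap A'_m)\,d(x_n,x'_m),$$
and because $\Phi$ preserves finite meets and the measure, the analogous expansion for $H(f),H(f')$ involves the sets $\Phi(A_n)\cap\Phi(A'_m)=\Phi(A_n\cap A'_m)$ with the same measures, yielding the same sum. Hence $H$ is isometric on $M^c_{\mu_1}(X)$. By (M2) the subset $M^f_{\mu_1}(X)\subset M^c_{\mu_1}(X)$ is dense in $M_{\mu_1}(X)$, hence in $M^r_{\mu_1}(X)$. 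To extend, take $f\in M^r_{\mu_1}(X)$, pick a $\sigma$-compact $K\subset X$ with $\im f\subset K$, and approximate $f$ by $f_n\in M^f_{\mu_1}(K)\subset M^c_{\mu_1}(X)$ with $\im f_n\subset\im f$ (for instance, by nearest-point quantization of $f$ against an increasing sequence of finite subsets of a countable dense set of $\im f$). Since $H$ preserves images, each $H(f_n)$ lies in $M_{\mu_2}(K)$, which is closed in $M_{\mu_2}(X)$ by (M4), and the sequence is Cauchy by the isometry on $M^c$; so $H(f_n)$ converges in $M_{\mu_2}(K)\subset M^r_{\mu_2}(X)$ to a limit $H(f)$ independent of the approximating sequence.

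Uniqueness is then immediate: the defining property pins down $H$ on $M^c_{\mu_1}(X)$, and continuity together with density force the rest. Running the identical construction with $\Phi^{-1}$ yields a map $H'\dd M^r_{\mu_2}(X)\to M^r_{\mu_1}(X)$; on the dense sets $M^c$ the compositions $H\circ H'$ and $H'\circ H$ act as identities (because $\Phi^{-1}\circ\Phi=\id$ on the level-set description), and by continuity they are identities globally, so $H$ is a surjective isometry, in particular a homeomorphism. The step I expect to be most delicate is the $\sigma$-continuity of $\Phi$ invoked in the very first paragraph: without it the prescription ``value $x_n$ on $\Phi(A_n)$'' is not known to produce a function equivalent to the limit of its finite truncations, and the precise description of $H$ on $M^c$ demanded by the statement would only be recoverable a posteriori, spoiling the clean uniqueness argument.
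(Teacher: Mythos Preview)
Your overall strategy coincides with the paper's: define $H$ on $M^c_{\mu_1}(X)$ by transporting level sets via $\Phi$, check it is an isometry there, then extend by density. Your remark on the $\sigma$-continuity of $\Phi$ is correct (the paper leaves it implicit), and the isometry computation, uniqueness, and bijectivity via $\Phi^{-1}$ are all fine.

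There is, however, a genuine gap in your extension step. You assert that for $\sigma$-compact $K\subset X$ the set $M_{\mu_2}(K)$ is closed in $M_{\mu_2}(X)$ ``by (M4)''; but (M4) says $\overline{M_{\mu_2}(K)}=M_{\mu_2}(\bar K)$, so closedness holds only when $K$ is closed in $X$, which a $\sigma$-compact set need not be. More seriously, even granting closedness you would still need $M_{\mu_2}(X)$ to be complete for your Cauchy sequence $(H(f_n))_n$ to converge at all, and by (M3) this fails unless $X$ is completely metrizable (nor is $M_{\mu_2}(K)$ itself complete in general: take $K=\QQQ$). So as written, $(H(f_n))_n$ has no limit to speak of.

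The paper repairs exactly this point. One first passes to the completion $\bar X$ of $(X,d)$, where $M_{\mu_j}(\bar X)$ is complete and the isometric extension $\bar H\dd M_{\mu_1}(\bar X)\to M_{\mu_2}(\bar X)$ exists automatically. To see that $\bar H(f)$ lands back in $M^r_{\mu_2}(X)$ for $f\in M^r_{\mu_1}(X)$, one partitions $\Omega_1$ into measurable pieces $A_l$ on which $f$ takes values in a \emph{compact} set $K_l$ (possible because the image of $f$ lies in a $\sigma$-compact set). Compactness makes $K_l$ closed in $\bar X$, so $M_{\mu_2|_{B_l}}(K_l)$ is a closed subset of the complete space $M_{\mu_2|_{B_l}}(\bar X)$; hence the limit of the approximants restricted to $B_l=\Phi(A_l)$ stays in $M_{\mu_2|_{B_l}}(K_l)$, and gluing over $l$ gives $\bar H(f)\in M^r_{\mu_2}(X)$. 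Your argument becomes correct once you insert this two-step fix (completion for convergence, compact pieces for landing in $M^r$).
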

\begin{proof}
It is clear that the connections between $f \in M^c_{\mu_1}(X)$ and $g \in M^c_{\mu_2}(X)$ described in the statement
of the theorem well (and uniquely) define $H$ on $M^c_{\mu_1}(X)$. Moreover, in this step $H$ is a bijection between
$M^c$-spaces. It is also clear that $H$ is isometric with respect to the suitable metrics (described in the statement).
Fix $d \in \Metr(X)$ and let $(\bar{X},\bar{d}\,)$ be the completion of $(X,d)$. Since the spaces
$(M_{\mu_j}(\bar{X}),M_{\mu_j}(\bar{d}\,))$ $(j=1,2)$ are complete, there is a unique continuous extension
$$\bar{H}\dd M_{\mu_1}(\bar{X}) \to M_{\mu_2}(\bar{X}),$$ which is simultaneously a (bijective) isometry. It is enough
to check that $\bar{H}(M^r_{\mu_1}(X)) \subset M^r_{\mu_2}(X)$ (because then we infer analogous inclusion
for $\bar{H}^{-1}$). Take an $\Mm_1$-measurable function $f\dd \Omega_1 \to X$ whose image is contained
in a $\sigma$-compact subset of $X$. This implies that there is a $\mu_1$-partition $\{A_n\}_{n=1}^{\infty}$
of $\Omega_1$ such that $K_n = \overline{f(A_n)}$ (the closure taken in $X$) is compact for each $n \geqsl 1$. There is
a $\mu_2$-partition $\{B_n\}_{n=1}^{\infty}$ of $\Omega_2$ such that $B_n = \Phi(A_n)$ in $\Aa(\mu_2)$ for any $n$.
For each $l \geqsl 1$ take a sequence $(f^{(l)}_n\dd A_l \to K_l)_{n=1}^{\infty}$ of $\Mm_1$-measurable functions
with finite images which converges pointwisely to $f\bigr|_{A_l}$. For every $n$ and $l$ let $\im f^{(l)}_n
= \{x^{(l,n)}_1,\ldots,x^{(l,n)}_{p_{l,n}}\}$ and let $B^{(l,n)}_1,\ldots,B^{(l,n)}_{p_{l,n}}$ be a $\mu_2$-partition
of $B_l$ such that $\Phi((f^{(l)}_n)^{-1}(\{x^{(l,n)}_j\})) = B^{(l,n)}_j$ in $\Aa(\mu_2)$. Define $g^{(l)}_n\dd B_l
\to K_l$ in the following way: $g^{(l)}_n\bigr|_{B^{(l,n)}_j} \equiv x^{(l,n)}_j$. Of course $H(\bigcup_{l=1}^{\infty}
f^{(l)}_n) = \bigcup_{l=1}^{\infty} g^{(l)}_n\ (n \geqsl 1)$. So --- since $f_n = \bigcup_{l=1}^{\infty} f^{(l)}_n$
tends to $f$ in $M_{\mu_1}(\bar{X})$ and $\bar{H}$ is isometric --- $g_n = \bigcup_{l=1}^{\infty} g^{(l)}_n$ is
a fundamental sequence in $M_{\mu_2}(\bar{X})$ and thus also the sequence $(g^{(l)}_n)_n$ is fundamental
in $M_{\mu_2|_{B_l}}(\bar{X})$. But $g^{(l)}_n$ is a member of $M_{\mu_2|_{B_l}}(K_l)$, which is closed
in $M_{\mu_2|_{B_l}}(\bar{X})$. This implies that there is $g^{(l)} \in M_{\mu_2|_{B_l}}(K_l)$ which is the limit
of $(g^{(l)}_n)_n$. Then the function $g = \bigcup_{l=1}^{\infty} g^{(l)}$ is the limit of $(g_n)_n$
in $M_{\mu_2}(\bar{X})$. Finally we conclude that $g \in M^r_{\mu_2}(X)$ and $H(f) = g$.
\end{proof}

We shall denote the unique homeomorphism $H$ corresponding to an isometric isomorphism $\Phi$ between Boolean measure
algebras, described in \THM{isom}, by $\widehat{\Phi}$.\par
The above result and (M11) give

\begin{cor}{m-alpha}
If $\mu$ is homogeneous and of weight $\alpha$, then $M^r_{\mu}(X) \cong M_{\alpha}(X)$.
\end{cor}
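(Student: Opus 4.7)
The plan is to chain together Maharam's theorem (M11), Theorem \ref{thm:isom}, and Theorem \ref{thm:M-r}. All heavy lifting has been done; the argument is essentially one of assembly.

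First I would reduce to the probabilistic case. Set $\tilde{\mu} = \mu/\mu(\Omega)$. Since convergence in measure and the equivalence relation of almost-everywhere equality are insensitive to positive rescaling, the identity map is a homeomorphism $M^r_{\mu}(X) \to M^r_{\tilde{\mu}}(X)$ (in fact, for $d\in\Metr(X)$ one has $M_{\mu}(d) = \mu(\Omega)\,M_{\tilde{\mu}}(d)$, so the induced metrics are scalar multiples of each other). The measure $\tilde{\mu}$ is a homogeneous probability measure of weight $w(\tilde{\mu}) = w(\mu) = \alpha$. Analogously, by (M17), $m_{\alpha}$ is a homogeneous probability measure on $\Omega_{\alpha}$ of weight $\alpha$.

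Next I would invoke Maharam's theorem (M11): since $\tilde{\mu}$ and $m_{\alpha}$ are homogeneous probability measures of the same weight $\alpha$, there exists an isometric isomorphism of Boolean algebras
\[
\Phi\dd \Aa(\tilde{\mu}) \longrightarrow \Aa(m_{\alpha}).
\]
Both $\tilde{\mu}$ and $m_{\alpha}$ are nonatomic (being homogeneous), so Theorem \ref{thm:isom} applies and yields a homeomorphism
\[
\widehat{\Phi}\dd M^r_{\tilde{\mu}}(X) \longrightarrow M^r_{m_{\alpha}}(X).
\]
Finally, Theorem \ref{thm:M-r} identifies $M^r_{m_{\alpha}}(X)$ with $M_{\alpha}(X)$. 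Composing with the rescaling homeomorphism from the first step gives $M^r_{\mu}(X) \cong M_{\alpha}(X)$.

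There is no real obstacle here; the only point that deserves a moment's thought is verifying that the reduction from $\mu$ to $\tilde{\mu}$ is legitimate, i.e.\ that homogeneity and weight are preserved and that the topology of convergence in measure is unchanged. Both facts are immediate from the definitions, since the measurable sets, the null sets, and the $\sigma$-algebra are untouched by rescaling.
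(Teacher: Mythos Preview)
Your proof is correct and follows exactly the route the paper intends: the corollary is stated immediately after \THM{isom} with the remark that it follows from that result together with (M11), and your argument spells out precisely this chain (normalize to a probability measure, apply Maharam's theorem (M11) to obtain $\Phi$, invoke \THM{isom} for $\widehat{\Phi}$, and use \THM{M-r} to identify $M^r_{m_{\alpha}}(X)$ with $M_{\alpha}(X)$). The rescaling step you single out is indeed the only point not made explicit in the paper, and your justification of it is sound.
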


The note following (M11) combined with the results of this section leads us to

\begin{thm}{nonatomic-r}
Let $\mu$ be nonatomic and let $Y = M^r_{\mu}(X)$. Then there is a finite or countable collection $\{\alpha_j\}_{j \in J}$
of different infinite cardinals such that $Y \cong \prod_{j \in J} M_{\alpha}(X)$. In particular, $Y^{\omega} \cong Y$,
$Y$ has RIP and is an S-space and therefore every metrizable manifold $U$ modelled on $Y$ is $Y$-stable.
\end{thm}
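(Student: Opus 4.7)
The first step is to reduce $Y$ to a product of the canonical factors $M_{\alpha_j}(X)$. Because $\mu$ is nonatomic, no set of positive measure can support a simple measure (on such an $A_j$ nonatomicity would force a subset of strictly intermediate measure, contradicting that $\mu|_{A_j}$ is two-valued), so in the decomposition~(M9) the family of simple pieces is empty. The remark following (M11), whose $N = M^r$ case is exactly (M7) applied to the resulting $\mu$-partition into homogeneous pieces, then yields a natural homeomorphism
$$Y \;\cong\; \prod_{j \in J} M^r_{\mu_j}(X)$$
with $J \subset \NNN$ and each $\mu_j$ a probabilistic homogeneous measure, the weights $\alpha_j := w(\mu_j)$ being pairwise distinct infinite cardinals. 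Applying \COR{m-alpha} factorwise upgrades each $M^r_{\mu_j}(X)$ to $M_{\alpha_j}(X)$, which gives the desired decomposition $Y \cong \prod_{j \in J} M_{\alpha_j}(X)$.

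Next I would harvest each of the listed consequences from this decomposition together with the earlier facts. The RIP claim is immediate from (M15) applied to $N = M^r$; no use of the product decomposition is needed. For $Y^\omega \cong Y$, invoke the factorwise identity $M_{\alpha_j}(X)^\omega \cong M_{\alpha_j}(X)$ supplied by \COR{manifold}; since countable Tichonov products commute with permutations of the index set, one obtains
$$Y^\omega \;\cong\; \prod_{j \in J} M_{\alpha_j}(X)^\omega \;\cong\; \prod_{j \in J} M_{\alpha_j}(X) \;\cong\; Y.$$
For the S-space property, each factor is an S-space by \THM{s-space}, and I would then verify by hand that a countable product of S-spaces is again an S-space: with basepoint $(\theta_j)_j$ and contraction map $F((x_j)_j,t) := (f_j(x_j,t))_j$, axioms (S1) and (S4) are algebraic and immediate; (S2) uses that every basic open neighbourhood of the product basepoint is nontrivial in only finitely many coordinates; and (S3) follows because a coordinatewise embedding into a countable product is an embedding.

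The final assertion on $Y$-stability of metrizable $Y$-manifolds then follows from ``$Y$ is an S-space and $Y \cong Y^\omega$'' by the same invocation of Schori~\cite{schori} used in the proof of \COR{manifold}. The whole argument is an assembly of infrastructure already in place: the only genuinely new bookkeeping is the elimination of simple pieces in (M9) under nonatomicity and the short product-of-S-spaces check. Neither looks like a serious obstacle; if anything, the delicate point is ensuring that the natural homeomorphism in the first display actually identifies the $M^r$-part with the product of $M^r$-parts (and not merely with the product of $M$-parts), but this is exactly the content of the $N = M^r$ clause in~(M7).
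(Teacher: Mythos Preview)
Your proof is correct and is precisely the natural unpacking of the paper's own argument, which is not written out: the paper merely says ``the note following (M11) combined with the results of this section leads us to'' the theorem. You have filled in those steps accurately, including the elimination of simple pieces from~(M9) under nonatomicity and the factorwise use of \COR{m-alpha} and \COR{manifold}. The only step you supply that the paper leaves entirely implicit is the verification that a countable product of S-spaces is again an S-space; your sketch of (S1)--(S4) for the product map $F((x_j)_j,t)=(f_j(x_j,t))_j$ is correct, and in particular your justification of~(S3) via coordinatewise inverses is sound because the parameter $t$ is shared across all factors.
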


\SECT{AM-class}

\begin{dfn}{AM}
A metrizable space is said to be an \textit{AM-space} [a \textit{So-space}] if every its closed separable subset
is absolutely measurable [a Souslin space].
\end{dfn}

Every So-space is an AM-space and all locally absolutely Borel spaces (in particular, completely metrizable spaces)
are So-spaces. It is also well known that finite or countable Cartesian products of AM-spaces [So-spaces] are AM-spaces
[So-spaces] as well.\par
AM-spaces may be characterized as follows:

\begin{pro}{AM}
For a metrizable space $X$ \tfcae
\begin{enumerate}[\upshape(i)]
\item $X$ is an AM-space,
\item $M^r_{\mu}(X) = M_{\mu}(X)$ for every finite measure space $(\Omega,\Mm,\mu)$,
\item $M^r_{\nu}(X) = M_{\nu}(X)$ for any separable metric space $Y$ and each probabilistic nonatomic measure $\nu$
   defined on $\Bb(Y)$.
\end{enumerate}
\end{pro}
\begin{proof}
Thanks to \LEM{AMimage}--(A), we only need to prove the implication (iii)$\implies$(i). Let $X$ satisfies the claim
of (iii) and let $A$ be a separable closed subset of $X$. Fix $d \in \Metr(A)$ and denote by $\hat{A}$ the completion
of $(A,d)$. Let $\mu$ be a finite Borel measure on $\hat{A}$. We may assume that $\mu$ is nonatomic. Put $\nu\dd \Bb(A)
\ni B \mapsto \inf \{\mu(C)\dd\ C \in \Bb(\hat{A}), B \subset C\} \in \RRR_+$. It is well known that $\nu$ is a measure.
By (iii), there is a Borel function $f\dd A \to X$ whose image is contained in a $\sigma$-compact subset of $X$ and such
that $f(a) = a$ for $\nu$-almost all $a \in A$. Since $A$ is closed in $X$, we may assume that $\im f$ is contained
in a $\sigma$-compact subset of $A$, say $K$. Then $K \in \Bb(\hat{A})$ and $\nu(A \setminus K) = 0$. Clearly, there is
$B \in \Bb(\hat{A})$ such that $A \subset B$ and $\nu(A) = \mu(B)$. Then $K \subset A \subset B$ and $\mu(B \setminus K)
= 0$, which finishes the proof.
\end{proof}

As an application of the above characterization, thanks to (M12), (M13) and the results of Section~2, we obtain

\begin{thm}{M=Mr}
Let $X$ be an AM-space and $d \in \Metr(X)$.
\begin{enumerate}[\upshape(A)]
\item For any finite measure spaces $(\Omega_1,\Mm_1,\mu_1)$ and $(\Omega_2,\Mm_2,\mu_2)$ the map
   $$\Lambda\dd (M_{\mu_1 \otimes \mu_2}(X),M_{\mu_1 \otimes \mu_2}(d)) \to
   (M_{\mu_1}(M_{\mu_2}(X)),M_{\mu_1}(M_{\mu_2}(d)))$$ given by $(\Lambda f(\omega_1))(\omega_2) = f(\omega_1,\omega_2)$
   is a (bijective) isometry.
\item For every two infinite cardinal numbers $\alpha$ and $\beta$, $M_{\alpha}(M_{\beta}(X)) \cong M_{\gamma}(X)$,
   where $\gamma = \max(\alpha,\beta)$. In particular, $M(M(X)) \cong M(X)$.
\item If $(\Omega,\Mm,\mu)$ is a finite measure space, $\Nn$ is a $\sigma$-subalgebra of $\Mm$
   and $\nu = \mu\bigr|_{\Nn}$, then $M_{\nu}(X)$ is closed in $M_{\mu}(X)$.
\item If $\mu$ is a finite nonatomic measure and $Y = M_{\mu}(X)$, then $Y \cong Y^{\omega}$, $Y$ has RIP and is
   an S-space.
\end{enumerate}
\end{thm}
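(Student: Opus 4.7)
The unifying idea is that, when $X$ is an AM-space, \PRO{AM} gives $M_\mu(X) = M^r_\mu(X)$ for every finite measure $\mu$. This reduces all four claims to assertions about the $M^r$-spaces studied in Section~2, for which we have already built up substantial machinery in (M1)--(M17) and the results of that section.

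For (A), the map $\Lambda$ is already known to be a bijective isometry when $\mu_1 \otimes \mu_2$ is replaced by the \emph{directed} product $\mu_1 \stackrel{\to}{\otimes} \mu_2$: this is precisely (M12). To transfer the statement to the ordinary product, I would invoke (M13), which gives $\Aa(\mu_1 \otimes \mu_2) = \Aa(\mu_1 \stackrel{\to}{\otimes} \mu_2)$, so every $(\Mm_1 \stackrel{\to}{\otimes} \Mm_2)$-measurable set agrees modulo null sets with an $(\Mm_1 \otimes \Mm_2)$-measurable one; then \LEM{AMimage}--(B) yields $M^r_{\mu_1 \otimes \mu_2}(X) = M^r_{\mu_1 \stackrel{\to}{\otimes} \mu_2}(X)$. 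Combining this with \PRO{AM} identifies $M_{\mu_1 \otimes \mu_2}(X)$ with $M_{\mu_1 \stackrel{\to}{\otimes} \mu_2}(X)$, and (M12) delivers the desired isometry.

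For (B), I would first apply (A) to rewrite $M_\alpha(M_\beta(X)) \cong M_{m_\alpha \otimes m_\beta}(X)$. Under the natural identification $I^\alpha \times I^\beta \cong I^\gamma$ with $\gamma = \max(\alpha,\beta)$, the measure $m_\alpha \otimes m_\beta$ becomes $m_\gamma$ (or, more formally, both are homogeneous probabilities of weight $\gamma$, so Maharam's theorem (M11) together with \THM{isom} produces a natural homeomorphism of the associated $M^r$-spaces). \THM{M-r} or \PRO{AM} then gives $M^r_{m_\gamma}(X) = M_\gamma(X)$, completing (B); the special case $\alpha = \beta = \aleph_0$ is $M(M(X)) \cong M(X)$. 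Part (C) is immediate: \PRO{AM} replaces $M_\mu(X)$ and $M_\nu(X)$ with $M^r_\mu(X)$ and $M^r_\nu(X)$, and the closedness part of \LEM{AMimage}--(B) finishes the job. Part (D) follows directly by identifying $Y$ with $M^r_\mu(X)$ via \PRO{AM} and quoting \THM{nonatomic-r}.

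The delicate point throughout is (A): the product $\sigma$-algebra $\Mm_1 \otimes \Mm_2$ is in general strictly smaller than the directed product, and one expects $\Lambda$ to fail to be surjective onto $M_{\mu_1}(M_{\mu_2}(X))$ without extra hypotheses. The AM-property of $X$ supplies exactly what is needed, since it forces each measurable map to take values, modulo a null set, in a $\sigma$-compact subset of $X$, and such maps can be approximated by finite-image functions that are automatically $\Mm_1 \otimes \Mm_2$-measurable. Once (A) is in place, parts (B)--(D) are essentially bookkeeping over the structure theorems of Section~2.
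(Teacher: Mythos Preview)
Your proposal is correct and follows exactly the route the paper indicates: the paper's own ``proof'' consists only of the sentence preceding the theorem (``thanks to (M12), (M13) and the results of Section~2''), and you have accurately filled in the details---using \PRO{AM} to identify $M$ with $M^r$, then (M12)--(M13) together with \LEM{AMimage}--(B) for part~(A), the product/Maharam identification for~(B), \LEM{AMimage}--(B) for~(C), and \THM{nonatomic-r} for~(D).
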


It turns out that the classes of AM-spaces and of So-spaces are invariant under the operators $M_{\mu}$, as it is shown
in the following

\begin{thm}{inv}
If $X$ is an AM-space [a So-space], then $M_{\mu}(X)$ is an AM-space [a So-space] as well for every finite measure
space $(\Omega,\Mm,\mu)$.
\end{thm}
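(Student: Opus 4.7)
The plan is to reduce, via three standard localisations, to a question about absolute measurability (respectively Souslinness) in a Polish ambient space. Starting from a closed separable $S \subset M_\mu(X)$ with countable dense $\{f_n\}_{n\in\NNN}$, I set $A := \overline{\bigcup_n \im f_n}$ with closure taken in $X$: then $A$ is a closed separable subspace of $X$, hence absolutely measurable (respectively Souslin) by the hypothesis on $X$; by (M4), $M_\mu(A)$ is closed in $M_\mu(X)$, so $S \subset M_\mu(A)$. Next, let $\Nn$ be the sub-$\sigma$-algebra of $\Mm$ generated by the countable family $\{f_n^{-1}(U_k) : n,k \in \NNN\}$ for a countable base $\{U_k\}$ of $A$, put $\nu := \mu|_\Nn$, and let $\bar\nu$ denote the restriction of $\mu$ to the $\mu$-completion $\bar\Nn$ of $\Nn$; then $\Aa(\bar\nu)$ is separable, each $f_n$ is $\Nn$-measurable, and \PRO{AM} applied to the AM-space $A$ (so that $M^r_\mu(A) = M_\mu(A)$ and $M^r_{\bar\nu}(A) = M_{\bar\nu}(A)$), together with \LEM{AMimage}(B) and the density of $M^f_\nu(A)$ in $M_{\bar\nu}(A)$, yields $S \subset M_{\bar\nu}(A)$.

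Now, fixing a bounded $d \in \Metr(A)$ with Polish completion $\hat A$, the space $P := M_{\bar\nu}(\hat A)$ is Polish by (M2) and (M3), and $M_{\bar\nu}(A)$ sits isometrically inside both $P$ and $M_\mu(X)$. Writing $\bar S$ for the closure of $S$ in $P$, the set $\bar S$ is Polish (closed in Polish) and $S = \bar S \cap M_{\bar\nu}(A)$, because $S$ is closed in $M_\mu(X)$ and the inclusion $M_{\bar\nu}(A) \hookrightarrow M_\mu(X)$ is isometric. It therefore suffices to show that $M_{\bar\nu}(A)$ is absolutely measurable (respectively Souslin) in $P$.

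The Souslin case is straightforward: taking a continuous surjection $\phi\colon Q \to A$ from a Polish $Q$, the induced map $\phi_*\colon M_{\bar\nu}(Q) \to M_{\bar\nu}(A)$, $g \mapsto \phi \circ g$, is continuous, and the Jankov--von Neumann theorem produces a universally measurable section $\sigma\colon A \to Q$ of $\phi$; the composition $\sigma \circ f$ is then $\bar\Nn$-measurable (since $\bar\Nn$ absorbs $f_*\bar\nu$-null sets), so $\phi_*$ is surjective. Hence $M_{\bar\nu}(A)$ is a continuous image of the Polish space $M_{\bar\nu}(Q)$, and $S = \phi_*(\phi_*^{-1}(S))$ is the continuous image of the Polish set $\phi_*^{-1}(S)$, so $S$ is Souslin. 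The absolutely measurable case is the main obstacle: the pushforward $f \mapsto f_*\bar\nu$ is continuous from $P$ to the Polish space of finite Borel measures on $\hat A$, and $M_{\bar\nu}(A) = \{f \in P : f_*\bar\nu(A) = \bar\nu(\Omega)\}$, so the task reduces to showing that $\lambda \mapsto \lambda(A)$ is universally measurable on the space of measures. I plan to settle this by a barycentre argument: for any Borel probability $\rho$ on the space of measures, absolute measurability of $A$ with respect to the barycentre $\int \lambda\,d\rho(\lambda)$ produces Borel sets $B^- \subset A \subset B^+$ with $\rho$-a.e.\ $\lambda$ satisfying $\lambda(B^-) = \lambda(B^+)$, so $\lambda \mapsto \lambda(A)$ is $\rho$-a.e.\ equal to the Borel function $\lambda \mapsto \lambda(B^-)$. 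Granting this, $M_{\bar\nu}(A)$ is absolutely measurable in $P$, whence $S = \bar S \cap M_{\bar\nu}(A)$ is absolutely measurable in $P$, and hence absolutely measurable as a topological space.
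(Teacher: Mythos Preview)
Your argument is correct. The initial localisation---passing from $S$ to a closed separable $A\subset X$ and to a countably generated sub-$\sigma$-algebra---is exactly what the paper does, and your Souslin case is essentially the paper's (the paper cites an external reference for the surjectivity of $M(g)$, while you give the Jankov--von~Neumann section explicitly; your justification that $\sigma\circ f$ represents an element of $M_{\bar\nu}(Q)$ should be phrased as ``after modification on a $\bar\nu$-null set'' rather than invoking completeness of $\bar\Nn$, but this is cosmetic).

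The absolutely measurable case, however, follows a genuinely different route. The paper first uses Maharam's theorem (via \COR{m-alpha}) to replace $(\Omega,\bar\Nn,\bar\nu)$ by $(I,\Bb(I),m)$, then invokes the characterisation $M_\lambda=M^r_\lambda$ of \PRO{AM} and the Fubini-type isomorphism $M_\lambda(M(A))\cong M_{\lambda\otimes m}(A)$ of \THM{M=Mr}(A): given $u\in M_\lambda(M(A))$ one unfolds it to a single Borel function on $S\times I$, uses absolute measurability of $A$ once to push the image into a $\sigma$-compact $K$, and then closes the argument by noting that $M(K)$ is Souslin---so the paper's AM case actually rests on its So case. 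Your barycentre argument bypasses all of this: you never leave the original measure space, you do not need Maharam or the Fubini isomorphism, and the AM and So cases are logically independent. The only ingredients are continuity of $f\mapsto f_*\bar\nu$ (immediate from dominated convergence) and the observation that $\lambda\mapsto\lambda(A)$ is universally measurable on the space of measures, which your barycentre computation establishes cleanly. This is more self-contained and arguably more transparent; the paper's approach, on the other hand, has the virtue of reusing machinery already developed in the paper and of exhibiting the So case as the ``core'' of both results.
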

\begin{proof}
Take a separable and closed subset $Y$ of $M_{\mu}(X)$. Let $\{f_n\}_{n=1}^{\infty}$ be a dense subset of $Y$. Put
$A = \overline{\bigcup_{n=1}^{\infty} \im f_n}$ (the closure taken in $X$) and let $\Nn$ be the smallest
$\sigma$-subalgebra of $\Mm$ such that each of the functions $f_n$ is $\Nn$-measurable. Then $A$ is separable
and $\Nn$ is a countably generated $\sigma$-algebra. This means that $\Aa(\nu)$ is separable,
where $\nu = \mu\bigr|_{\Nn}$. Therefore $M_{\nu}(A)$ is separable as well. What is more, by \THM{M=Mr}--(C),
the space $M_{\nu}(A)$ is closed in $M_{\mu}(X)$ and thus $Y \subset M_{\nu}(X)$. Since the classes of AM-spaces
and So-spaces are closed hereditary, it suffices to show that $M_{\nu}(X)$ is an AM-space [a So-space] if so is $X$.
Further, thanks to the note following (M11), we may assume that $\nu$ is nonatomic. But then (see \PRO{AM}
and \COR{m-alpha}) $M_{\nu}(X) \cong M(X)$. So, we have reduced the proof to showing that $M(X)$ is an AM-space
[a So-space], provided $X$ is so and $X$ is separable. First we shall show this for the So-class.\par
Suppose $X$ is a separable nonempty Souslin space. Then there is a continuous surjection $g\dd \RRR \setminus \QQQ
\to X$. Put $M(g)\dd M(\RRR \setminus \QQQ) \ni f \mapsto g \circ f \in M(X)$ (see the last section).
By \cite[Theorem~3.3]{pn}, $M(g)$ is a continuous surjection. So, by the complete metrizability and the separability
of $M(\RRR \setminus \QQQ)$, $M(X)$ is indeed a Souslin space.\par
Now assume that $X$ is a separable absolutely measurable space. Let $S$ be a separable metrizable space and let $\lambda$
be a probabilistic Borel nonatomic measure on $S$. It is enough to prove that $M_{\lambda}(M(X)) = M^r_{\lambda}(M(X))$.
Let $u \in M_{\lambda}(M(X))$. By \THM{M=Mr}--(A), there is a Borel function $v\dd S \times I \to X$ such that
$u(s)$ and $v(s,\cdot)$ coincide in $M(X)$ for $\lambda$-almost all $s \in S$. Since $X$ is absolutely measurable,
there is a Borel function $w\dd S \times I \to X$ whose image is contained in a $\sigma$-compact subset of $X$
(say $K$) and such that $v$ and $w$ coincide in $M_{\lambda \otimes m}(X)$. Put $\widetilde{u}\dd S \ni s \mapsto
w(s,\cdot) \in M(K) \subset M(X)$. Then $\widetilde{u}$ and $u$ represent the same element of $M_{\lambda}(M(X))$.
What is more, $\widetilde{u} \in M_{\lambda}(M(K))$ and $M(K)$ is a Souslin space, which yields that $\widetilde{u}
\in M^r_{\lambda}(M(K)) \subset M^r_{\lambda}(M(X))$. This finishes the proof.
\end{proof}

We end the section with the following

\begin{exm}{notAM}
It is well known that there exists a subset $X$ of the square $I^2$ which is not Lebesgue measurable, but for
each $t \in I$ the set $X_t = \{s \in I\dd\ (t,s) \in X\}$ is a Borel subset of $I$ and $m(X_t) = 1$.
This implies that $X \in \Bb(I) \stackrel{\to}{\otimes} \Bb(I)$. So, the map $f\dd I^2 \to X$
which is the identity on $X$ and constant on its complement is $\Bb(I) \stackrel{\to}{\otimes} \Bb(I)$-measurable.
However, since $X$ is nonmeasurable, there is no $g \in M_{m \otimes m}(X)$ which coincides with $f$
in $M_{m \stackrel{\to}{\otimes} m}(X)$; and $f \notin M^r_{m \stackrel{\to}{\otimes} m}(X)$.
Thus we have obtained that $M^r_{m \stackrel{\to}{\otimes} m}(X) \varsubsetneq M_{m \stackrel{\to}{\otimes} m}(X)$
and $M_{m \otimes m}(X) \varsubsetneq M_{m \stackrel{\to}{\otimes} m}(X)$ as well. The example shows that (M12)
is in general not true if we put there $\nu = \nu_1 \otimes \nu_2$. It also shows that if $\nu$ is the restriction
of $\mu$ to a dense (in $\Aa(\mu)$) $\sigma$-subalgebra, then $M_{\nu}(X)$, in spite of its density in $M_{\mu}(X)$,
may differ from $M_{\mu}(X)$.
\end{exm}

We do not know whether $M(M(X)) \cong M(X)$ if $X$ is as in \EXM{notAM}.

\SECT{Main results}

In this section we shall show that all considered by us spaces of measurable functions with respect to nonatomic
measures are absolute retracts. In our proof we shall use the following three results:

\begin{lemm}{b-p1}{\mbox{\cite[Theorem~3.1]{b-p}}}
Every metrizable space admits an embedding into the unit sphere of a Hilbert space whose image is linearly independent.
\end{lemm}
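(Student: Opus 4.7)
The plan is to realise $X$ topologically inside an auxiliary Hilbert space $H_{0}$ and then to push it forward into the unit sphere of a second Hilbert space via the canonical feature map of the Gaussian kernel; linear independence of the image will then follow from the strict positive-definiteness of that kernel.

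First I would invoke the standard metrization result that every metrizable space admits a topological embedding into some Hilbert space, say $\psi\dd X\to H_{0}$ (for instance, into $\ell^{2}(\Gamma)$ with $\Gamma\geqsl w(X)$, built from a $\sigma$-discrete base together with a subordinate partition of unity). Because $\psi$ is a topological embedding, the pulled-back metric $\rho(x,y):=\|\psi(x)-\psi(y)\|$ belongs to $\Metr(X)$. Next, introduce the Gaussian kernel $K(x,y):=e^{-\rho(x,y)^{2}}$ together with the associated reproducing-kernel Hilbert space $\Hh$: endow the linear span of formal symbols $\{k_{x}\dd x\in X\}$ with the symmetric bilinear form $\langle k_{x},k_{y}\rangle:=K(x,y)$, quotient by its kernel, and complete. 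Positive semidefiniteness of this form comes from the factorisation
\[
K(x,y)=e^{-\|\psi(x)\|^{2}}\cdot e^{2\langle\psi(x),\psi(y)\rangle}\cdot e^{-\|\psi(y)\|^{2}}
\]
coupled with the series expansion $e^{2\langle u,v\rangle}=\sum_{n\geqsl 0}2^{n}\langle u^{\otimes n},v^{\otimes n}\rangle/n!$, which realises the middle factor as an inner product in the symmetric Fock space over $H_{0}$.

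Setting $\varphi(x):=k_{x}\in\Hh$, we obtain $\|\varphi(x)\|^{2}=K(x,x)=1$, so $\varphi(X)$ lies on the unit sphere of $\Hh$. Moreover
\[
\|\varphi(x)-\varphi(y)\|^{2}=2\bigl(1-e^{-\rho(x,y)^{2}}\bigr)
\]
is a continuous strictly increasing function of $\rho(x,y)$ that vanishes iff $x=y$; hence the metric $\|\varphi(\cdot)-\varphi(\cdot)\|$ is topologically equivalent to $\rho$, which forces $\varphi$ to be a topological embedding onto its image. For linear independence of $\{\varphi(x)\dd x\in X\}$, it suffices to verify that for every finite collection of pairwise distinct points $x_{1},\dots,x_{n}$ the Gram matrix $[K(x_{i},x_{j})]$ is nonsingular, that is, strictly positive definite. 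By the factorisation above this reduces to strict positive-definiteness of $[e^{\langle v_{i},v_{j}\rangle}]$ for any distinct $v_{1},\dots,v_{n}\in H_{0}$. Choosing $\phi\in H_{0}$ such that the numbers $\langle\phi,v_{1}\rangle,\dots,\langle\phi,v_{n}\rangle$ are pairwise distinct (possible because the differences $v_{i}-v_{j}$ are only finitely many nonzero vectors) and supposing $\sum_{i}c_{i}e^{v_{i}}=0$ in the symmetric Fock space, the projection onto $H_{0}^{\otimes k}$ paired with $\phi^{\otimes k}$ yields $\sum_{i}c_{i}\langle\phi,v_{i}\rangle^{k}=0$ for every $k\geqsl 0$; a Vandermonde argument then forces $c_{1}=\dots=c_{n}=0$.

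The main technical hurdle is precisely this strict positive-definiteness step for the Gaussian (equivalently, exponential) kernel; all remaining pieces, namely the unit-sphere condition, continuity, and injectivity with continuous inverse, are formal consequences of the RKHS construction, and the same argument works uniformly in the separable and non-separable cases provided $H_{0}$ is taken of sufficient Hilbert dimension.
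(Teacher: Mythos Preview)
The paper does not prove this lemma; it is quoted from \cite[Theorem~3.1]{b-p} and used as a black box. Your argument is correct and self-contained.

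One small point: you assert that $\rho\in\Metr(X)$, which in this paper's convention means $\rho$ is \emph{bounded}; that need not hold for an arbitrary embedding $\psi$ into $H_{0}$. This is harmless, since $t\mapsto\sqrt{2(1-e^{-t^{2}})}$ is a homeomorphism of $[0,\infty)$ onto $[0,\sqrt{2})$, so the topological equivalence of $\|\varphi(\cdot)-\varphi(\cdot)\|$ and $\rho$ holds regardless; alternatively, compose $\psi$ with the radial contraction $v\mapsto v/(1+\|v\|)$ at the outset.

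Your route---first embed into a Hilbert space by the standard $\sigma$-discrete-base/partition-of-unity construction, then push into the unit sphere via the Gaussian feature map and read off linear independence from strict positive-definiteness of the exponential kernel---is a clean modern packaging of the same circle of ideas used in \cite{b-p}. The Vandermonde argument for linear independence of the coherent states $e^{v_{i}}$ in symmetric Fock space is the key technical step, and your treatment of it (choose $\phi$ off the finitely many hyperplanes $(v_{i}-v_{j})^{\perp}$, then pair with $\phi^{\otimes k}$) is correct.
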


\begin{lemm}{b-p2}{\mbox{\cite[the proof of Lemma~4.3]{b-p}}}
Let $T$ be a finite linearly independent subset of the unit sphere of a Hilbert space $(\HHh,\scalarr)$. Let $K$
be the convex hull of $T$ (in $\HHh$) and let $D = M(K)$ be equipped with the topology $\tau_w$ induced by the weak one
of $L^2[\lin T]\ (= L^2(m,\lin T))$, i.e. a sequence $(f_n)_n$ of members of $D$ converges to $f \in D$ iff $$\int_0^1
\scalar{f_n(t)}{g(t)} \dint{t} \to \int_0^1 \scalar{f(t)}{g(t)} \dint{t}\ (n \to \infty)$$ for each $g \in D$. Then:
\begin{enumerate}[\upshape(BP1)]
\item $(D,\tau_w)$ is metrizable compact and convex,
\item the inclusion map of $M(T)$ into $D$ is an embedding, when $M(T)$ is equipped with the topology of convergence
   in measure,
\item there is a sequence of maps from $D$ into $D$ whose images are contained in $M(T)$ which is uniformly convergent
   to the identity map on $D$.
\end{enumerate}
\end{lemm}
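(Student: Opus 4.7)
The plan is to identify $D = M(K)$ with a weakly closed, bounded, convex subset of the separable Hilbert space $L^2[\lin T]$, and then exploit linear independence of $T$ to pass between the weak topology and the topology of convergence in measure. For (BP1), since $K$ is bounded in the finite-dimensional space $\lin T$, every $f \in D$ has bounded $L^2$-norm, so $D$ is norm-bounded in $L^2[\lin T]$; it is convex because $K$ is convex; and it is norm-closed because any $L^2$-limit has a subsequence converging almost everywhere, and the almost-everywhere limit takes values in the closed set $K$. A norm-closed convex set in a Hilbert space is weakly closed, and bounded subsets of a separable Hilbert space are weakly metrizable and weakly compact, giving (BP1).

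For (BP2), write $T = \{t_1,\ldots,t_N\}$; each $f \in M(T)$ decomposes uniquely as $f = \sum_{i=1}^N \chi_{A_i} t_i$ with $A_i = f^{-1}(\{t_i\})$. If $f_n \to f$ in measure in $M(T)$, then every subsequence has an almost-everywhere convergent sub-subsequence, and bounded convergence yields $L^2$-norm convergence to $f$, hence weak convergence; thus the inclusion is continuous. Conversely, assume $f_n = \sum_i \chi_{A_i^{(n)}} t_i$ converges weakly to $f = \sum_i \chi_{A_i} t_i$. Linear independence of $T$ makes the Gram matrix $\bigl[\scalar{t_i}{t_j}\bigr]$ invertible, so each coordinate $\chi_{A_i^{(n)}}$ is a fixed linear combination of the scalar fields $\scalar{f_n(\cdot)}{t_j}$, $j=1,\ldots,N$; consequently $\chi_{A_i^{(n)}} \to \chi_{A_i}$ weakly in $L^2([0,1])$. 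Testing this with $\chi_{A_i}$ gives $m(A_i^{(n)} \cap A_i) \to m(A_i)$, and testing with $1 - \chi_{A_i}$ gives $m(A_i^{(n)} \setminus A_i) \to 0$; together, $m(A_i^{(n)} \triangle A_i) \to 0$, which is precisely convergence in measure.

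For (BP3), for each $n \geqsl 1$ let $\PPp_n$ be the dyadic partition of $[0,1]$ into the intervals $I_{n,k} = [(k-1)/2^n, k/2^n)$, $k = 1,\ldots,2^n$. Given $f \in D$, set $c_{n,k}(f) = 2^n \int_{I_{n,k}} f \dint{m} \in K$ and let $(\lambda_{n,k,i}(f))_{i=1}^N$ be its (unique) barycentric coordinates with respect to $T$. Define $\pi_n(f) \in M(T) \subset D$ by subdividing each $I_{n,k}$ into $N$ consecutive subintervals of lengths $\lambda_{n,k,i}(f) / 2^n$ and assigning value $t_i$ on the $i$-th subinterval. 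Then $\pi_n(f)$ and $f$ have the same integral over every cell of $\PPp_n$. Continuity of $\pi_n\dd (D, \tau_w) \to (D, \tau_w)$ is immediate: weak convergence $f_j \to f$ implies $\int_{I_{n,k}} f_j \to \int_{I_{n,k}} f$ (test against $\chi_{I_{n,k}} v$ for $v \in \lin T$), so the barycentric coordinates, and then the subdivision endpoints, vary continuously, forcing $\pi_n(f_j) \to \pi_n(f)$ in $L^1$; uniform boundedness upgrades this to convergence in measure, hence to weak convergence.

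The main obstacle is uniform convergence $\pi_n \to \id$ on all of $D$. Fix a countable dense sequence $(g_j)$ in the unit ball of $L^2[\lin T]$ and let $\varrho(f,h) = \sum_{j=1}^{\infty} 2^{-j} |\scalar{f-h}{g_j}|$ be a metric generating $\tau_w$ on the bounded set $D$. Because $\pi_n(f) - f$ has vanishing integral over every cell of $\PPp_n$, we have $\scalar{\pi_n(f) - f}{g_j} = \scalar{\pi_n(f) - f}{g_j - \EEE_n g_j}$, where $\EEE_n g_j$ denotes the conditional expectation of $g_j$ with respect to the $\sigma$-algebra generated by $\PPp_n$. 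Cauchy--Schwarz then yields $|\scalar{\pi_n(f) - f}{g_j}| \leqsl 2(\diam K)\,\|g_j - \EEE_n g_j\|_{L^2}$, a bound independent of $f$ tending to $0$ as $n \to \infty$ by $L^2$ martingale convergence along the refining dyadic filtration. Summing against $2^{-j}$ and using dominated convergence produces the desired uniform estimate $\sup_{f \in D} \varrho(\pi_n(f), f) \to 0$, completing (BP3).
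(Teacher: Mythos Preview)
The paper does not supply its own proof of this lemma: it is quoted verbatim as a result extracted from the proof of Lemma~4.3 in Bessaga and Pe\l{}czy\'{n}ski~\cite{b-p}, and the paper moves directly on to the next cited theorem. So there is no in-paper argument to compare against.

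Your proof is correct and self-contained. The arguments for (BP1) and (BP2) are the standard ones. For (BP3), your averaging-and-redistribution map $\pi_n$ is essentially the construction Bessaga and Pe\l{}czy\'{n}ski use; the crucial observation that $\pi_n(f)-f$ is orthogonal to every $\PPp_n$-measurable function, combined with martingale convergence $\|g_j - \EEE_n g_j\|_{L^2}\to 0$, gives the uniform estimate cleanly. Two cosmetic remarks: the bound $\|\pi_n(f)-f\|_{L^2}\leqsl \diam K$ already holds (no factor $2$ is needed, since both functions take values in $K$); and in the continuity argument for $\pi_n$, $L^1$-convergence already implies convergence in measure without invoking boundedness --- boundedness is what then lifts convergence in measure to $L^2$- (hence weak) convergence. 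Neither point affects correctness.
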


\begin{thmm}{tor}{\mbox{\cite[Theorem~1.1]{tor0}}}
If a metrizable space $X$ has a basis (consisting of open sets) such that all finite intersections of its members
are homotopically trivial, then $X$ is an ANR.
\end{thmm}

Recall that a topological space $X$ is \textit{homotopically trivial} iff for every $n \geqsl 1$ each map of $\partial
I^n$ into $X$ is extendable to a map of $I^n$ into $X$. Note also that the empty space is homotopically trivial.\par
Following \cite{brz},\cite{banakh}, a subset $A$ of a space $X$ is said to be \textit{homotopy dense} (in $X$)
if there is a homotopy $H\dd X \times [0,1] \to X$ such that $H(x,0) = x$ for each $x \in X$ and $H(X \times (0,1])
\subset A$. If $X$ is an ANR, then $A$ is homotopy dense in $X$ iff $X \setminus A$ is locally homotopy negligible
in $X$ (\cite{tor0}). The main result of the paper has the following form:

\begin{thm}{main}
Let $(\Omega,\Mm,\mu)$ be a finite nonatomic measure space, $X$ a nonempty metrizable space and $A$ its dense subset.
Then the space $M_{\mu}(X)$ is an AR and $M^f_{\mu}(A)$ is homotopy dense in $M_{\mu}(X)$.
\end{thm}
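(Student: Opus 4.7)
The plan is to verify the Toru\'{n}czyk ANR criterion (\THM{tor}) for $M_{\mu}(X)$ and then combine it with the contractibility supplied by (M6) to conclude that $M_{\mu}(X)$ is an AR; the homotopy density of $M^f_{\mu}(A)$ is then handled by a separate explicit construction using (M6).

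First I would fix a bounded $d \in \Metr(X)$ and, by \LEM{b-p1}, embed $X$ as a linearly independent subset of the unit sphere $S$ of a real Hilbert space $(\HHh,\scalarr)$, identifying $X$ with its image. This gives inclusions $M_{\mu}(X) \subset M_{\mu}(K) \subset L^1(\mu,\HHh)$ with $K = \conv X$. The convex set $M_{\mu}(K)$ is the ambient workspace in which to form convex combinations; the main idea is to extend maps into $M_{\mu}(X)$ by first extending into $M_{\mu}(K)$ and then pushing back into $M^f_{\mu}(X)$ via \LEM{b-p2}--(BP3), whose applicability is precisely secured by the linear independence of $X$.

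To satisfy \THM{tor} I would take as a basis $\BBb$ for $M_{\mu}(X)$ the family of sets
\[V=V(\{A_i\}_{i=1}^n,\{U_i\}_{i=1}^n,\delta)=\Bigl\{f\dd\ \sum_{i=1}^n\mu(\{\omega\in A_i\dd f(\omega)\notin U_i\})<\delta\Bigr\},\]
where $\{A_i\}$ is a finite measurable partition of $\Omega$, each $U_i$ is open in $X$, and $\delta>0$. Finite intersections reduce to the same form after common refinement of partitions together with intersection of the $U_i$'s, so the essential step is homotopical triviality of a single $V$. Given a map $\phi\dd\partial I^n\to V$, I would regard $\phi$ as a map into $M_{\mu}(K)$, extend by convex combination to $\Phi\dd I^n\to M_{\mu}(K)$, and then apply \LEM{b-p2}--(BP3) on each piece $A_i$ with a sufficiently rich finite set $T_i\subset X\cap U_i$ to obtain a uniform approximation $\widetilde{\Phi}\dd I^n\to M^f_{\mu}(X)$ whose values stay in $V$. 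A final patching near $\partial I^n$, using the map $\lambda$ from (M6), reconciles $\widetilde{\Phi}$ with $\phi$ on $\partial I^n$ and yields the desired extension of $\phi$ into $V$.

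For the homotopy density I would construct a sequence of continuous maps $\sigma_n\dd M_{\mu}(X)\to M^f_{\mu}(A)$ converging pointwise to $\id$ (using locally finite open covers of $X$ of mesh tending to $0$, together with a choice of a point of $A$ in each cover member) and then define $H\dd M_{\mu}(X)\times I\to M_{\mu}(X)$ by $H(f,0)=f$ and, for $t\in[1/(n+1),1/n]$, by $\lambda$-blending $\sigma_n(f)$ and $\sigma_{n+1}(f)$; by the last assertion of (M6), such a blend stays inside $M^f_{\mu}(A)$. The hard part will be the ANR step, specifically forcing $\widetilde{\Phi}$ to remain inside $V$ on each piece $A_i$: the linear independence provided by \LEM{b-p1} is exactly what lets \LEM{b-p2}--(BP3), applied to $T_i\subset X\cap U_i$, yield approximations with values in $X\cap U_i$ rather than in the full convex hull, so the proof's design reduces to a compatibility check over the partition $\{A_i\}$.
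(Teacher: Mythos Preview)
Your overall architecture --- Toru\'nczyk's criterion (\THM{tor}) plus the Bessaga--Pe\l{}czy\'nski embedding and (BP3) --- matches the paper, but the execution has real gaps at each of the three stages.

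\emph{The basis.} Your claim that finite intersections of the sets $V(\{A_i\},\{U_i\},\delta)$ ``reduce to the same form after common refinement of partitions together with intersection of the $U_i$'s'' is false. After refining to a common partition $\{C_k\}$, the two conditions $\sum_k\mu(\{f\notin U_{i(k)}\}\cap C_k)<\delta_1$ and $\sum_k\mu(\{f\notin W_{j(k)}\}\cap C_k)<\delta_2$ are genuinely independent constraints, not a single one with some new $\delta$. Worse: it can happen that $U_{i(k)}\cap W_{j(k)}=\varnothing$ on a piece $C_k$ of positive measure while $V_1\cap V_2$ is still nonempty, and then there is no finite $T_k\subset U_{i(k)}\cap W_{j(k)}$ at all, so your piecewise approximation scheme cannot even be set up. The paper avoids this by using as basic sets the open balls $B_{\varrho_X}(u_j,r_j)$ in the $L^2$-type metric $\varrho_X$ coming from the Hilbert embedding; the point of this choice is that, once one is in the finite $T$ situation (see below), the ball condition $\varrho_X(g,u_j)<r_j$ becomes a \emph{linear} inequality $\scalar{g}{u_j}_V>\delta_j$ on the unit sphere of $L^2[V]$, i.e.\ the trace of a weakly open half-space, hence convex. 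That convexity is what makes the ``extend by convex combination, then push back by (BP3)'' argument land in the right set.

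\emph{The use of \textup{(BP3)}.} Your extension $\Phi\dd I^n\to M_\mu(\conv X)$ lives in an infinite-dimensional $K$, whereas \LEM{b-p2} is stated only for $K=\conv T$ with $T$ finite and for the weak topology on $D=M(K)$. You cannot apply (BP3) ``on each piece $A_i$'' without first reducing to a finite-dimensional situation. The paper supplies exactly this reduction: given $f\dd\partial\Delta_k\to W=\bigcap_j B_{\varrho_X}(u_j,r_j)$, it first replaces the centers $u_j$ by elements of $M^f_\mu(A)$, then approximates $f$ uniformly by simplicial maps $g_l$ whose images lie in $M_\mu(T_l)$ for finite $T_l\subset A$ (using the $\lambda_n$ built from (M5)/(M6)), and only then invokes \LEM{b-p2} --- after a further reduction, via a countably generated $\sigma$-subalgebra and Maharam's theorem, from $(\Omega,\mu)$ to $(I,m)$. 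Each of these reductions is needed, and none appears in your outline.

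\emph{Homotopy density.} The maps $\sigma_n\dd M_\mu(X)\to M^f_\mu(A)$ you sketch (``send $f(\omega)$ to a chosen point of $A$ in the cover member containing it'') are generally \emph{not} continuous in the topology of convergence in measure: a small perturbation of $f$ can move a positive-measure set of $\omega$'s across the boundaries of the cover pieces, producing a jump in $\sigma_n(f)$. The paper does not build an explicit homotopy at all; instead it observes that the proof of \LEM{main} already shows, for every $\varrho_X$-ball $B$, that the inclusion $B\cap M^f_\mu(A)\hookrightarrow B$ is a weak homotopy equivalence (every $f\dd\partial\Delta_k\to B$ is homotopic in $B$ to a map with image in $M_\mu(T)$, $T\subset A$ finite). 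This gives local homotopy negligibility of the complement, which in an ANR is equivalent to homotopy density.
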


The proof of the above theorem is divided into a few lemmas. Let us fix a finite nonatomic measure space
$(\Omega,\Mm,\mu)$, a nonempty metrizable space $X$ and its dense subset $A$. By \LEM{b-p1}, we may assume that $X$
is a linearly independent subset of the unit sphere $\SSs$ of a Hilbert space $(\HHh,\scalarr)$. For each bounded
subset $E$ of $\HHh$, the topology of convergence in measure in $M_{\mu}(E)$ coincides with the topology induced
by the metric $\varrho_E(u,v) = (\int_{\Omega} \|u(\omega) - v(\omega)\|^2 \dint{\mu(\omega)})^{1/2}\
(u,v \in M_{\mu}(E))$. For each $Y \subset X$, we shall denote by $B_{\varrho_Y}(u,r)$ the open ball
in $(M_{\mu}(Y),\varrho_Y)$ with center at $u \in M_{\mu}(Y)$ and of radius $r > 0$. Our purpose is to prove that
if $u_1,\ldots,u_p \in M_{\mu}(X)$ and $r_1,\ldots,r_p > 0$, then $\bigcap_{j=1}^p B_{\varrho_X}(u_j,r_j)$
is homotopically trivial. First we shall show a special case of this.

\begin{lem}{finite}
Let $T$ be a finite subset of $X$. Then for every $u_1,\ldots,u_p \in M_{\mu}(T)$ and each $r_1,\ldots,r_p > 0$, the set
$G = \bigcap_{j=1}^p B_{\varrho_T}(u_j,r_j)$ is homotopically trivial.
\end{lem}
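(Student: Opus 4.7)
Following Bessaga--Pe\l{}czy\'{n}ski, I embed the combinatorial problem into a convex setting. Let $K = \conv(T) \subset \HHh$, and endow $M_{\mu}(K)$ with the $L^2$-metric $\varrho_K(u,v) = \bigl(\int_{\Omega} \|u(\omega)-v(\omega)\|^2 \dint{\mu(\omega)}\bigr)^{1/2}$; then $M_{\mu}(K)$ is a convex subset of the Hilbert space $L^2(\mu,\lin T)$. Set
$$
\widetilde G = \bigcap_{j=1}^{p} B_{\varrho_K}(u_j, r_j).
$$
As a finite intersection of convex open balls inside the convex set $M_{\mu}(K)$, $\widetilde G$ is convex, and hence (being a convex subset of a metrizable locally convex topological vector space) an AR by Dugundji's theorem. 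Moreover $G = \widetilde G \cap M_{\mu}(T)$.

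Given a map $\phi \dd \partial I^n \to G$, first view it as a map into $\widetilde G$ and, using that $\widetilde G$ is an AR, extend to a continuous map $\Phi \dd I^n \to \widetilde G$. It then remains to deform $\Phi$ into a map $\tilde\phi \dd I^n \to G$ which still agrees with $\phi$ on $\partial I^n$.

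To achieve this I apply \LEM{b-p2}. Although (BP1)--(BP3) are stated for Lebesgue measure on $I$, they transfer to the present setting as follows. The compact image $\phi(\partial I^n) \subset M_{\mu}(T)$ is separable, so it lies in $M_{\mu|_{\Nn}}(T)$ for some countably generated $\sigma$-subalgebra $\Nn \subset \Mm$; enlarging $\Nn$ if necessary, we may assume that $\mu|_{\Nn}$ is nonatomic. By Maharam's theorem (M11) together with the pointwise isomorphism machinery, $(\Omega,\Nn,\mu|_{\Nn})$ is almost pointwisely isomorphic to $(I,\Bb(I),m)$, so the conclusions of (BP1)--(BP3) apply verbatim to $M_{\mu|_{\Nn}}(K)$. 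This yields a sequence of maps $\psi_k \dd (M_{\mu|_{\Nn}}(K),\tau_w) \to (M_{\mu|_{\Nn}}(K),\tau_w)$ with $\psi_k(M_{\mu|_{\Nn}}(K)) \subset M_{\mu|_{\Nn}}(T)$ and $\psi_k \to \id$ uniformly in $\tau_w$. By (BP2), on $M_{\mu|_{\Nn}}(T)$ the topology $\tau_w$ coincides with the $\varrho_T$-topology, so $\psi_k \circ \phi \to \phi$ uniformly in $\varrho_T$ on $\partial I^n$; in particular $\psi_k \circ \phi$ lands in the open set $G$ for large $k$ and can be connected to $\phi$ inside $G$ by a short homotopy obtained from the canonical partition interpolation of (M6), whose intermediate functions stay $\varrho_T$-close to the endpoints.

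The extension $\tilde\phi$ is then built by a collar construction: on $I^n$ minus a thin collar $C$ of $\partial I^n$ set $\tilde\phi = \psi_k \circ \Phi$, and on $C$ interpolate from $\phi$ on $\partial I^n$ to $\psi_k \circ \Phi$ on the inner boundary of $C$ using the short $G$-valued homotopy above. The one subtlety -- and, in my view, the main obstacle -- is ensuring that $\psi_k \circ \Phi$ lands in $\widetilde G$, not merely in $M_{\mu}(K)$: a weak approximation can inflate the $L^2$-norm, so a priori $\psi_k \circ \Phi$ might leave the $\varrho_K$-balls defining $\widetilde G$. The standard remedy is to reserve room: perform the initial extension with slightly smaller radii $r_j' < r_j$, so that $\Phi$ actually takes values in a strictly smaller convex set $\widetilde G' \subset \widetilde G$; uniform $\tau_w$-convergence $\psi_k \to \id$ on the compact set $\Phi(I^n)$, together with weak lower semicontinuity of the $L^2$-norm, then forces $\psi_k \circ \Phi$ into $\widetilde G$ for $k$ large enough, completing the construction.
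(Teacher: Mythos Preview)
Your outline follows the paper's strategy closely up to the point you yourself flag as the main obstacle, but your proposed remedy for that obstacle does not work, and the missing idea is precisely the one the paper supplies.

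The issue is this. You extend $\Phi$ into the convex set $\widetilde G' = \bigcap_j B_{\varrho_K}(u_j,r_j')$, then apply the Bessaga--Pe\l{}czy\'{n}ski approximants $\psi_k$, which converge to the identity only in the weak topology $\tau_w$. You claim that ``weak lower semicontinuity of the $L^2$-norm'' forces $\psi_k\circ\Phi$ back into $\widetilde G$. But lower semicontinuity goes the wrong way: from $\psi_k(w)\to w$ weakly one gets $\|w-u_j\|\le\liminf\|\psi_k(w)-u_j\|$, not an upper bound on $\|\psi_k(w)-u_j\|$. And there is no hidden rescue via inner products either: a point $w\in M_\mu(K)$ may sit well inside the norm-ball $B_{\varrho_K}(u_j,r_j')$ yet have $\langle w,u_j\rangle$ far below the threshold needed for $\psi_k(w)\in G$, simply because $\|w\|_{L^2}<1$. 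Concretely, with $T=\{e_1,e_2,e_3\}$ orthonormal and $u_1\equiv e_1$, the constant function $w\equiv\tfrac12(e_2+e_3)$ satisfies $\|w-u_1\|=\sqrt{3/2}$ but $\langle w,u_1\rangle=0$; for suitable $r_1$ one has $w\in\widetilde G'$ while every $T$-valued weak approximant $\psi_k(w)$ has $\langle\psi_k(w),u_1\rangle\to 0$, hence $\|\psi_k(w)-u_1\|^2\to 2>r_1^2$. Shrinking the radii cannot cure this in general, since $\phi(\partial I^n)$ may already contain points at distance close to $r_j$ from $u_j$.

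The paper's key observation is that because $T$ lies on the unit sphere of $\HHh$, the set $M(T)$ lies on the unit sphere of $L^2[\lin T]$; hence on $M(T)$ the ball condition $\|v-u_j\|<r_j$ is \emph{equivalent} to the linear inequality $\langle v,u_j\rangle>1-r_j^2/2=:\delta_j$. One therefore extends $\phi$ not into $\widetilde G$ but into the set $U=D\cap\bigcap_j\{g:\langle g,u_j\rangle>\delta_j\}$, which is convex \emph{and} $\tau_w$-open. Now uniform $\tau_w$-convergence of $\psi_k$ to the identity on the $\tau_w$-compact set $D$ immediately yields $\psi_k\circ\hat f(I^n)\subset U$ for large $k$; since $\psi_k$ lands in $M(T)$ and $U\cap M(T)=G$, the approximants lie in $G$, and the homotopy-extension finish goes through. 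Replacing your norm-balls by these half-spaces is exactly the missing step.
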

\begin{proof}
Fix $k \geqsl 1$ and take a map $f\dd \partial(I^k) \to G$. Let $\{A_t\}_{t \in I}$ be as in (M5) and let $E$
be an at most countable dense subset of $\im f$. There is a countably generated $\sigma$-subalgebra $\Nn$ of $\Mm$
such that each member of $E$ is $\Nn$-measurable and $A_q \in \Nn$ for $q \in \QQQ \cap I$. Put $\nu = \mu\bigr|_{\Nn}$.
Then $\nu$ is nonatomic and $\Aa(\nu)$ is separable. What is more, since $T$ is clearly an AM-space, $M_{\nu}(T)$ is
closed in $M_{\mu}(T)$ (\THM{M=Mr}--(C)). This implies that $\im f \subset M_{\nu}(T)$. Now by \COR{m-alpha},
$M_{\nu}(T) \cong M(T)$. Note also that the homeomorphism $H$ (between $M_{\nu}(T)$ and $M(T)$) appearing in the statement
of \THM{isom} is an isometry with respect to the metrics $\varrho_T\bigr|_{M_{\nu}(T) \times M_{\nu}(T)}$ and
$d_T\dd M(T) \times M(T) \ni (u,v) \mapsto (\int_0^1 \|u(t) - v(t)\|^2 \dint{t})^{1/2} \in \RRR_+$. So, the inverse
image of $G$ under $H$ coincides with the finite intersection of open $d_T$-balls in $M(T)$. This reduces the problem
to the case when $(\Omega,\Mm,\mu) = (I,\Bb(I),m)$, which we now assume. Let $V = \lin T \subset \HHh$. Following
Bessaga and Pe\l{}czy\'{n}ski\cite{b-p}, consider the Hilbert space $L^2[V]$ of all (equivalence classes of) Borel
functions $w\dd I \to V$ such that $\int_0^1 \|w(t)\|^2 \dint{t} < +\infty$ with the scalar product $\scalar{u}{v}_V
= \int_0^1 \scalar{u(t)}{v(t)} \dint{t}$. Put $\delta_j = 1 - \frac{r_j^2}{2}$ and $U_j = \{g \in L^2[V]\dd\
\scalar{g}{u_j}_V > \delta_j\}\ (j=1,\ldots,p)$. It is easily seen that each $U_j$ is convex and open in the weak
topology of $L^2[V]$. Let $K$ and $(D,\tau_w)$ be as in the statement of \LEM{b-p2}. By (BP2), the topology of $M(T)$
coincides with the one induced by $\tau_w$ and therefore to the end of the proof we shall deal only with the topology
$\tau_w$. Put $U = D \cap \bigcap_{j=1}^p U_j$. Note that $U$ is open in $(D,\tau_w)$ and $U$ is convex. What is more,
since $T \subset \SSs$, $M(T)$ is contained in the unit sphere of $L^2[V]$ and therefore
\begin{equation}\label{frag}
U \cap M(T) = G.
\end{equation}
This implies that $f\dd \partial(I^k) \to U$. Since $U$ is convex, there exists a continuous extension $\hat{f}\dd I^k
\to U$ of $f$. Further, applying \LEM{b-p2}, take a sequence of maps $\varphi_n\dd D \to D$ which is uniformly convergent
to the identity map on $D$ and such that $\im \varphi_n \subset M(T)$. Then the sequence $f_n = \varphi_n \circ
\hat{f}\dd I^k \to D$ is uniformly convergent to $\hat{f}$. This yields that for infinitely many $n$ we have
$\im f_n \subset U$ and thus we may assume that the latter inclusion is satisfied for each $n$. But $\im f_n \subset
\im \varphi_n \subset M(T)$, which combined with (\ref{frag}) gives $\im f_n \subset G$. Again by (BP2), the sequence
$(f_n\bigr|_{\partial(I^k)}\dd \partial(I^k) \to G)_n$ tends uniformly to $f$ (with respect to the topology of $M(T)$).
Finally, since $G$ is open in $M(T)$ and thanks to (M6), $G$ is locally equiconnected, which implies that for some $n$,
$f_n\bigr|_{\partial(I^k)}$ and $f$ are homotopic in $G$. So, the homotopy extension property finishes the proof.
\end{proof}

The main result (\THM{main}) is an easy consequence of \THM{tor} and the following

\begin{lem}{main}
If $u_1,\ldots,u_p \in M_{\mu}(X)$ and $r_1,\ldots,r_p > 0$, then the set $W = \bigcap_{j=1}^p B_{\varrho_X}(u_j,r_j)$
is homotopically trivial.
\end{lem}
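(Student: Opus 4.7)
The plan is to extend a given continuous map $\phi\dd \partial(I^k) \to W$ (with $k \geqsl 1$) to a map $I^k \to W$ by reducing to \LEM{finite} through a two-step approximation: first replacing each center $u_j$ by a finitely-valued $\tilde u_j$, and then replacing $\phi$ by a map $\tilde\phi$ into $M_{\mu}(T)$ for a single finite subset $T \subset X$. Since $\phi(\partial(I^k))$ is compact in the open set $W$, I would begin by choosing $\epsi > 0$ with $\varrho_X(\phi(x),u_j) < r_j - 3\epsi$ for all $x$ and $j$, and then use (M2) to pick $\tilde u_j \in M^f_{\mu}(X)$ with $\varrho_X(u_j,\tilde u_j) < \epsi$; let $T_0$ be the finite union of the ranges of the $\tilde u_j$.

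The key step is to produce a finite $T \supset T_0$ in $X$ together with a continuous $\tilde\phi\dd \partial(I^k) \to M^f_{\mu}(T)$ satisfying $\varrho_X(\phi(x),\tilde\phi(x)) < \epsi$ uniformly in $x$. I would triangulate $\partial(I^k)$ so finely that $\phi$ oscillates by a small (dimension-dependent) fraction of $\epsi$ on each closed simplex, use (M2) to choose $\tilde\phi(v) \in M^f_{\mu}(X)$ within the same fraction of $\phi(v)$ at each vertex $v$, take $T$ to be $T_0$ together with the finite union of the ranges of all these $\tilde\phi(v)$, and then extend $\tilde\phi$ inductively over the simplex skeleta via iterated application of the equiconnecting homotopy $\lambda$ from (M6). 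The last clause of (M6) ensures the interpolated values remain in $M^f_{\mu}(T)$, while the bound $\varrho_X(f,\lambda(f,g,t)) \leqsl \varrho_X(f,g)$ (immediate from the formula for $\lambda$) together with the fineness of the triangulation forces $\tilde\phi$ to stay uniformly $\epsi$-close to $\phi$.

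With $\tilde\phi$ in place, two triangle inequalities show that $\tilde\phi$ takes values in the set $W_T := \bigcap_{j=1}^{p} B_{\varrho_T}(\tilde u_j, r_j - \epsi) \subset M_{\mu}(T)$, and that any extension of $\tilde\phi$ into $W_T$ automatically lands in $W$. Applying \LEM{finite} inside $M_{\mu}(T)$ then yields an extension $\hat\phi\dd I^k \to W_T$. To convert this into an extension of $\phi$ rather than of $\tilde\phi$, I would glue $\phi$ and $\hat\phi$ along a collar of $\partial(I^k)$ in $I^k$ via the same $\lambda$; since $\phi$ and $\tilde\phi = \hat\phi|_{\partial(I^k)}$ are within $\epsi$ in $\varrho_X$, this interpolation stays in $W$ throughout and produces the required map $I^k \to W$.

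The principal obstacle is the key step, because $M_{\mu}(X)$ carries no native convex-combination structure --- only the equiconnecting $\lambda$. Consequently $\tilde\phi$ must be built by iterated $\lambda$-interpolation over the simplex skeleta (with a chosen ordering of vertices), and the uniform $\varrho_X$-bound requires the vertex approximation to be finer than $\epsi$ by a factor depending on the simplicial dimension $k$; once this careful bookkeeping is carried out, the remaining argument is routine.
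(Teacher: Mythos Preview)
Your proposal is correct and follows essentially the same route as the paper's proof: approximate the centers by finitely-valued functions, finely triangulate the sphere, approximate $\phi$ at the vertices by elements of $M^f_{\mu}(X)$, interpolate over each simplex via the patchwork operation (your iterated $\lambda$ with a global vertex ordering is exactly the paper's multi-variable $\lambda_k$, and the same $\varrho_X^2$-estimate falls out of the patchwork structure), apply \LEM{finite} inside $M_{\mu}(T)$, and then return to the original map by a short homotopy within $W$. The only cosmetic differences are that the paper works with $\partial(\Delta_k)$, writes down $\lambda_k$ directly rather than iterating $\lambda$, and invokes the homotopy extension property in place of your explicit collar interpolation.
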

\begin{proof}
We may assume that $\mu$ is probabilistic. For each $k \geqsl 1$ let $\Delta_k = \{(t_0,\ldots,t_k) \in I^{k+1}\dd\
\sum_{j=0}^k t_j = 1\}$ be the $k$-dimensional simplex and let $\partial(\Delta_k) = \{x \in \Delta_k\dd\ x_j = 0
\textup{ for some } j\}$ be its combinatorial boundary. It is enough to prove that each map of $\partial(\Delta_k)$
into $W$ is extendable to a map of $\Delta_k$ into $W$. Fix a map $f\dd \partial(\Delta_k) \to W$. Since $W$ is open,
$\im f$ is compact and $M^f_{\mu}(A)$ is dense in $M_{\mu}(X)$, there are functions $u_1^*,\ldots,u_p^* \in M^f_{\mu}(A)$
and numbers $r_1^*,\ldots,r_p^* > 0$ such that $\im f \subset W^* \subset W$, where $W^* = \bigcap_{j=1}^p
B_{\varrho_X}(u_j^*,r_j^*)$. Thus we may and shall assume that
\begin{equation}\label{eqn:f}
u_1,\ldots,u_p \in M^f_{\mu}(A).
\end{equation}
As in the proof of \LEM{finite}, take a family $\{A_t\}_{t \in I}$ satisfying the claim of (M5). One may show that
for each $n \geqsl 1$ the function $\lambda_n\dd M^f_{\mu}(A)^{n+1} \times \Delta_n \to M^f_{\mu}(A)$ given by
$$\lambda_n(v_0,\ldots,v_n;t_0,\ldots,t_n) = \bigcup_{j=0}^n v_j\bigr|_{A_{t_j} \setminus A_{t_{j-1}}}$$
(with $t_{-1} = 0$) is continuous. Further, take a positive number $\epsi$ such that
\begin{equation}\label{eqn:epsi}
\bigcup_{x \in \partial(\Delta_k)} B_{\varrho_X}(f(x),\epsi) \subset W
\end{equation}
and put $\delta = \frac{\epsi}{3 \sqrt{k}}$. Fix $l \geqsl 1$. Let $\KKk_0$ be the collection of all faces
of $\Delta_k$ and for each $n \geqsl 1$ let $\KKk_n$ be the collection of all geometric simplices obtained
by the barycentric divisions of all members of $\KKk_{n-1}$. There is $N \geqsl 1$ such that $\diam_{\varrho_X}f(\sigma)
\leqsl \frac{\delta}{l}$ for every $\sigma \in \KKk_N$. Now for each vertex $x$ of any member of $\KKk = \KKk_N$ take
$v_x \in M^f_{\mu}(A)$ such that $\varrho_X(f(x),v_x) \leqsl \delta/l$. Let `$\preccurlyeq$' be a total order on the set
of all vertices of all members of $\KKk$. Take any $\sigma \in \KKk$ and assume that $x_0 \prec \ldots \prec x_k$
are vertices of $\sigma$. We define $g_{\sigma}\dd \sigma \to M^f_{\mu}(A)$ by $g_{\sigma}(\sum_{j=0}^k t_j x_j)
= \lambda_k(v_{x_0},\ldots,v_{x_k};t_0,\ldots,t_k)\ ((t_0,\ldots,t_k) \in \Delta_k)$. Since $x_0,\ldots,x_k$
are linearly independent, $g_{\sigma}$ is continuous. What is more, if also $\sigma' \in \KKk$, then $g_{\sigma}$
and $g_{\sigma'}$ coincide on $\sigma \cap \sigma'$. This yields that the union $g_l$ of all $g_{\sigma}$'s is a well
defined continuous function from $\partial(\Delta_k)$ into $M^f_{\mu}(A)$. And, what is important, there is a finite
subset $T_l$ of $A$ such that $\im g_l \subset M_{\mu}(T_l)$. Moreover, if $x \in \sigma$, where $\sigma \in \KKk$
has vertices $x_0 \prec \ldots \prec x_k$, then
\begin{multline}\label{eqn:estim}
\varrho_X(g_l(x),f(x))^2 \leqsl \sum_{j=0}^k \varrho_X(v_{x_j},f(x))^2\\ \leqsl 2\sum_{j=0}^k
\Bigl(\varrho_X(v_{x_j},f(x_j))^2 + \varrho_X(f(x_j),f(x))^2\Bigr) \leqsl 4k \frac{\delta^2}{l^2} < \frac{\epsi^2}{l^2}
\end{multline}
and thus, by (\ref{eqn:epsi}), $\im g_l \subset W$.\par
Now for $t \in (l,l+1)$ put $g_t\dd \partial(\Delta_k) \ni x \mapsto \lambda_1(g_l(x),g_{l+1}(x);l+1-t,t-l)
\in M^f_{\mu}(A)$. It is clear that $(g_t)_{t \geqsl 1}$ is a homotopy. Furthermore, one checks, using (\ref{eqn:estim}),
that for each $t \in [l,l+1]$ with $l \geqsl 2$ and every $x \in \partial(\Delta_k)$, $\varrho_X(g_t(x),f(x))^2
< \frac{2\epsi^2}{l^2} \leqsl \epsi^2$. So, $\im g_t \subset W$ for $t \geqsl 2$ and the function $h\dd \partial(\Delta_k)
\times [2,\infty] \to W$ given by $h(x,t) = g_t(x)$ for $t < +\infty$ and $h(x,\infty) = f(x)$ is a homotopy connecting
$g_2$ and $f$. By the homotopy extension property, it suffices to show that $g_2$ is extendable to a map of $\Delta_k$
into $W$. To this end, put $T = T_2 \cup \bigcup_{j=1}^p \im u_j \subset A$. By (\ref{eqn:f}), $T$ is finite. What is
more, $u_1,\ldots,u_p \in M_{\mu}(T)$. Finally, $W \cap M_{\mu}(T) = \bigcap_{j=1}^p B_{\varrho_T}(u_j,r_j)$ and $g_2\dd
\partial(\Delta_k) \to W \cap M_{\mu}(T)$. So, by \LEM{finite}, $g_2$ admits a continuous extension of $\Delta_k$
into $W \cap M_{\mu}(T)$, which finishes the proof.
\end{proof}

\begin{proof}[Proof of \THM{main}]
Let $X$ be embedded as a linearly independent subset of the unit sphere of a Hilbert space. By \THM{tor} and \LEM{main},
$M_{\mu}(X)$ is a homotopically trivial ANR. This yields that it is an AR. Finally, the last paragraph of the proof
of \LEM{main} shows that for every open ball $B$ in $M_{\mu}(X)$ (with respect to the metric $\varrho_X$) the inclusion
map $B \cap M^f_{\mu}(A) \to B$ is a (weak) homotopy equivalence and hence $M_{\mu}(X) \setminus M^f_{\mu}(A)$ is
locally homotopy negligible in $M_{\mu}(X)$ (\cite{tor0}).
\end{proof}

\begin{cor}{f,c,r}
If $\mu$ is a finite nonatomic measure and $X$ is a nonempty metrizable space, then the spaces $M^f_{\mu}(X)$,
$M^c_{\mu}(X)$, $M^r_{\mu}(X)$ and $M_{\mu}(X)$ are AR's.
\end{cor}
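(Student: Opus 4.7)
The plan is to leverage \THM{main} (which already provides that $M_{\mu}(X)$ is an AR, establishing the statement for the fourth space) and to observe that the same argument, essentially unmodified, handles the other three cases. The key point is that every extension produced in the proofs of \LEM{finite} and \LEM{main} actually takes values in $M^f_{\mu}(A)$ for a dense set $A\subset X$, and $M^f_{\mu}(A)\subset M^f_{\mu}(X)\subset M^c_{\mu}(X)\subset M^r_{\mu}(X)$. So whichever of the three subspaces we wish to treat, the constructions of the main theorem automatically remain inside it.

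Concretely, fix $N\in\{M^f_{\mu}(X),M^c_{\mu}(X),M^r_{\mu}(X)\}$. I would first note that $N$ is contractible: by (M6), for a family $\{A_t\}_{t\in I}$ as in (M5) the map $\lambda$ restricted to $N\times N\times I$ still lands in $N$, and contracts $N$ to a constant function. Next, embedding $X$ into the unit sphere of a Hilbert space as in the proof of \THM{main}, I would take the basis of $N$ consisting of sets of the form $B_{\varrho_X}(u,r)\cap N$ with $u\in N$ and show that any finite intersection $W\cap N$, with $W=\bigcap_{j=1}^{p}B_{\varrho_X}(u_j,r_j)$ and $u_j\in N$, is homotopically trivial; then \THM{tor} gives that $N$ is an ANR, hence, together with contractibility, an AR.

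To prove triviality of $W\cap N$, I would run the proof of \LEM{main} verbatim. Given $f\dd\partial(\Delta_k)\to W\cap N$, the density of $M^f_{\mu}(A)$ in $M_{\mu}(X)$ (assertion (M2)) lets one shrink $W$ to $W^{*}$ with centers in $M^f_{\mu}(A)\subset N$; the subsequent construction of $g_l$ via the concatenation maps $\lambda_n$ produces values in $M^f_{\mu}(A)$; the connecting homotopy $h$ between $g_2$ and $f$ uses only $\lambda_1$ applied to elements of $M^f_{\mu}(A)\cup\im f\subset N$, so it stays inside $W\cap N$; and finally \LEM{finite} extends $g_2$ over $\Delta_k$ inside $M_{\mu}(T)=M^f_{\mu}(T)\subset M^f_{\mu}(A)\subset N$. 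The homotopy extension property, applied inside the open subset $W\cap N$ of the metric space $N$, produces the required extension of $f$.

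The main obstacle is really only bookkeeping: verifying that each intermediate function constructed in the proof of \THM{main} lies in $M^f_{\mu}(A)$ (and hence in each of the three subspaces) and that all homotopies connecting them can be rerun inside $N$. No new analytic or topological input is required beyond the inclusion $M^f_{\mu}(A)\subset N$ and the observation that $M^f_{\mu}(A)$ is dense in $N$ with respect to the topology of convergence in measure.
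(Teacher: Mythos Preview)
Your argument is correct, but it is more laborious than what the paper intends. The paper states the corollary without proof because it follows in one line from \THM{main}: that theorem gives that $M_{\mu}(X)$ is an AR and that $M^f_{\mu}(X)$ (take $A=X$) is homotopy dense in it. It is a standard fact (see the references to \cite{tor0} and \cite{brz} already in the paper) that a homotopy dense subset of an ANR is itself an ANR of the same homotopy type; hence $M^f_{\mu}(X)$ is an AR. The intermediate spaces $M^c_{\mu}(X)$ and $M^r_{\mu}(X)$ contain $M^f_{\mu}(X)$, so they are homotopy dense in $M_{\mu}(X)$ as well and are therefore AR's for the same reason.

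Your route---rerunning Lemmas~\ref{lem:finite} and~\ref{lem:main} inside each subspace $N$---works because, as you observe, every construction in those proofs already produces values in $M^f_{\mu}(A)\subset N$ and the connecting homotopy uses only $\lambda_1$, which preserves $N$ by (M6). This is a legitimate proof, and it has the minor advantage of being self-contained (not invoking the black-box ``homotopy dense subsets of AR's are AR's''). The paper's approach, by contrast, packages exactly that observation into the homotopy density statement of \THM{main} and then quotes the general principle, which is shorter and makes clear why the corollary is immediate.
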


\begin{rem}{algebra}
If $\Nn$, $A$ and $M^f(\Nn,A)$ are as in (M2) and additionally $\Nn$ contains a subfamily $\{A_t\}_{t \in I}$ as in (M5),
the proof of \LEM{main} shows that $M^f(\Nn,A)$ is homotopy dense in $M_{\mu}(X)$ and thus it is an AR. This implies
that the space $M^s(X) \subset M(X)$ of all piecewise constant functions is an AR.
\end{rem}

As a first consequence of \THM{main} we obtain a generalization of theorems of Bessaga and Pe\l{}czy\'{n}ski\cite{b-p}
and of Toru\'{n}czyk\cite{tor3}:

\begin{thm}{H}
If $\mu$ is a finite nonatomic (nonzero) measure and $X$ is a completely metrizable space which has more than one point,
then $M_{\mu}(X)$ is homeomorphic to an infinite-dimensional Hilbert space of dimension $\alpha = \max(w(\mu),w(X))$.
\end{thm}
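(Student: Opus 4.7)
The plan is to verify the hypotheses of a Toru\'{n}czyk-type characterization of Hilbert spaces: every completely metrizable noncompact AR of weight $\alpha$ that is homeomorphic to its own countable infinite Cartesian power is homeomorphic to the Hilbert space of dimension $\alpha$ (for $\alpha=\aleph_0$ this is \cite{tor1}; for arbitrary infinite $\alpha$ one applies the nonseparable version due to Banakh--Zarichnyy \cite{b-z}). So I will check, one by one, each of these properties for $Y:=M_{\mu}(X)$.

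First I would observe that since $X$ is completely metrizable, $X$ is a So-space, hence in particular an AM-space; \PRO{AM} then gives $M_{\mu}(X)=M^r_{\mu}(X)$. Next, \THM{nonatomic-r} applied to the nonatomic measure $\mu$ yields $Y^{\omega}\cong Y$. Third, \THM{main} asserts that $Y$ is an AR. Fourth, by (M3) the complete metrizability of $X$ transfers to $Y$, and because $\card X>1$ the same property (M3) ensures that $Y$ is noncompact. Finally, (M2) gives $w(Y)=\max(w(\mu),w(X))=\alpha$, which must be infinite since $\mu$ is nonatomic (so $w(\mu)\geqsl\aleph_0$).

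At this stage all the hypotheses of the cited characterization are in place, so $Y$ is homeomorphic to the Hilbert space of dimension $\alpha$, which is what we wanted to prove.

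The only genuine obstacle is the invocation of the characterization theorem in the nonseparable case: the separable case is the classical Toru\'{n}czyk theorem, whereas for $\alpha>\aleph_0$ one needs the Banakh--Zarichnyy extension, and one must be sure that $Y\cong Y^{\omega}$ together with the AR property of a completely metrizable noncompact space of weight $\alpha$ really does suffice (otherwise an auxiliary discrete approximation property would have to be verified, for which the equiconnectedness structure of $Y$ given by (M6) and the density of $M^f_{\mu}(A)$ from \THM{main} provide plenty of room). Once the right black-box is cited, however, the verification of its hypotheses is merely a bookkeeping exercise assembled from the results of Sections 1--4.
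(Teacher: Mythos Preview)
Your argument is correct and follows essentially the same route as the paper: verify that $Y=M_{\mu}(X)$ is a completely metrizable noncompact AR with $Y^{\omega}\cong Y$ and weight $\alpha$, then invoke Toru\'{n}czyk's characterization. The paper packages the step $Y^{\omega}\cong Y$ via \THM{M=Mr}(D) rather than going through \PRO{AM} and \THM{nonatomic-r} separately, but this is the same content.

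Your only unnecessary hedge concerns the nonseparable case. Toru\'{n}czyk's theorem \cite[Theorem~5.1]{tor1} (with the corrections of \cite{tor2}) already asserts, for arbitrary infinite weight, that a completely metrizable noncompact AR homeomorphic to its own countable power is a Hilbert space; the paper cites exactly this, and no appeal to \cite{b-z} or to auxiliary discrete approximation properties is needed.
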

\begin{proof}
Put $Y = M_{\mu}(X)$. By \THM{M=Mr}--(D), $Y^{\omega} \cong Y$. But $Y$ is a noncompact AR and thus,
by \cite[Theorem~5.1]{tor1}, $Y$ is homeomorphic to a Hilbert space of dimension $w(Y)$. So, the observation that
$w(Y) = \alpha$ finishes the proof.
\end{proof}

Now repeating the proofs (with $M_G$ replaced by $M_{\alpha}(G)$) of Theorem 5.1 and Corollary 5.2 of \cite{b-p} we get

\begin{cor}{action}
Let $\HHh$ be a Hilbert space of dimension $\alpha \geqsl \aleph_0$ and let $G$ be a completely metrizable topological
group of weight no greater than $\alpha$. Then $G$ is (algebraically and topologically) isomorphic to a closed subgroup
of a group homeomorphic to $\HHh$ and $G$ admits a free action in $\HHh$.
\end{cor}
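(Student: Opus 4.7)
The plan is to take $Y := M_\alpha(G) = M_{m_\alpha}(G)$ as the ambient group homeomorphic to $\HHh$ and to read everything off the preliminaries together with \THM{H}. The case $\card G = 1$ being trivial, I will assume throughout that $G$ has more than one point.

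The first key computation is that $w(Y) = \alpha$. Indeed, (M17) gives $w(m_\alpha) = \alpha$, and then (M2) yields $w(Y) = \max(w(m_\alpha), w(G)) = \alpha$ since $w(G) \leqsl \alpha$. As $G$ is completely metrizable with more than one point and $m_\alpha$ is finite nonatomic, \THM{H} applies and produces $Y \cong \HHh$.

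Next I would exploit the topological group structure on $Y$. Since $G$ is a metrizable topological group, pointwise multiplication turns $Y$ into a completely metrizable topological group, as noted in the preliminaries (the remark immediately following (M1)). By (M1) itself, $\delta_{m_\alpha, G}\dd G \to Y$ is simultaneously an isometric embedding onto the closed subset $\Delta_{m_\alpha}(G)$ and a group homomorphism; hence $G$ is algebraically and topologically isomorphic to a closed subgroup of $Y$, which settles the first assertion.

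For the free action I would let $G$ act on $Y$ by left translations, $\Phi(g, f) := \delta_{m_\alpha, g} \cdot f$, the multiplication being taken in $Y$. Continuity of $\Phi$ is immediate from the continuity of $\delta_{m_\alpha, G}$ composed with multiplication in the topological group $Y$; freeness is the tautology that in any group, $af = f$ forces $a$ to be the identity, which here gives $\delta_{m_\alpha, g} = \delta_{m_\alpha, e}$ and hence $g = e$ by injectivity of $\delta_{m_\alpha, G}$. I do not expect a substantial obstacle: the corollary is essentially a packaging of \THM{H}, (M1), (M2), (M17) and the folklore that the functor $M_\mu$ preserves the topological group structure, running the original arguments of Bessaga and Pe\l{}czy\'{n}ski\cite{b-p} verbatim with the separable space $M_G$ replaced throughout by $M_\alpha(G)$.
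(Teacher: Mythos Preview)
Your proposal is correct and is exactly the argument the paper has in mind: the paper's own ``proof'' merely says to repeat the Bessaga--Pe\l{}czy\'{n}ski arguments with $M_\alpha(G)$ in place of $M_G$, and what you have written is precisely that. One cosmetic slip: the remark about the natural topological group structure on $M_\mu(X)$ appears in the preliminaries just \emph{before} (M1), not after it.
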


\SECT{Extending maps}

We begin this section with

\begin{dfn}{functor}
Let $\mu$ be a finite measure and let $f\dd X \to Y$ be a map. Let
$$
M_{\mu}(f)\dd M_{\mu}(X) \ni u \mapsto f \circ u \in M_{\mu}(Y).
$$
$M_{\mu}(f)$ is said to be the \textit{$\mu$-extension of $f$}. Additionally, let $M(f) = M_m(f)$
and $M_{\alpha}(f) = M_{m_{\alpha}}(f)$ for every infinite cardinal $\alpha$.
\end{dfn}

Note that $M_{\mu}(f)$ is continuous and that $M_{\mu}(f)(N_{\mu}(X)) \subset N_{\mu}(Y)$ for $N=M^f,M^c,M^r$.
The connection
\begin{equation}\label{eqn:ext}
M_{\mu}(f) (\delta_{\mu,X}(x)) = \delta_{\mu,Y}(f(x)) \qquad (x \in X)
\end{equation}
says that $M_{\mu}(f)$ extends $f$, when we identify the elements of $Z$ with the ones of $\Delta_{\mu,Z}$ via
$\delta_{\mu,Z}$ with $Z=X,Y$, which justifies the undertaken terminology. If, in addition, $X$ and $Y$ are
topological groups and $f$ is a group homomorphism, so is $M_{\mu}(f)$.\par
The Reader will easily check that whenever $\mu$ is a fixed finite measure, the operations $X \mapsto M_{\mu}(X)$
and $f \mapsto M_{\mu}(f)$ define a functor. This functor has interesting properties, whose proofs are left
as exercises (below we assume that $g_n,g\dd X \to Y$ are maps):
\begin{enumerate}[(F1)]
\item $M_{\mu}(g)$ is an injection [embedding] iff $g$ is so,
\item $\overline{\im}\,M_{\mu}(g) = M_{\mu}(\,\overline{\im}\,g)$,
\item the sequence $(M_{\mu}(g_n))_n$ is pointwisely [uniformly on compact subsets of $M_{\mu}(X)$] convergent
   to $M_{\mu}(g)$ iff the sequence $(g_n)_n$ pointwisely [uniformly on compact subsets of $X$] converges to $g$,
\item for each $\varrho \in \Metr(Y)$ the map $$(\CCc(X,Y),\varrho_{\sup}) \ni h \mapsto M_{\mu}(h)
   \in (\CCc(M_{\mu}(X),M_{\mu}(Y)),(M_{\mu}(\varrho))_{\sup})$$ is isometric (`$\CCc(A,B)$' denotes the collection
   of all maps from $A$ to $B$ and `$d_{\sup}$' stands for the supremum metric induced by a bounded metric $d$).
\end{enumerate}
It is clear that for each $f \in \CCc(X,Y)$, $\im M_{\mu}(f) \subset \bigcup_A M_{\mu}(f(A))$ where $A$ runs over all
separable closed subsets of $X$. We do not know whether the latter inclusion can always be replaced by the equality.
We are only able to show the following result, the proof of which is similar to the proof of \cite[Theorem~3.3]{pn}.

\begin{pro}{image}
Whenever $\mu$ is a finite measure and $f\dd X \to Y$ is a map, $\im M^r_{\mu}(f) = \bigcup_K M^r_{\mu}(f(K))$
where $K$ runs over all $\sigma$-compact subsets of $X$ and $M^r_{\mu}(f) = M_{\mu}(f)\bigr|_{M^r_{\mu}(X)}$.
What is more, if $v \in M^r_{\mu}(f(A))$, where $A$ is a (separable) Souslin subset of $X$,
then $v \in \im M^r_{\mu}(f)$.
\end{pro}
\begin{proof}
We only need to prove the second claim. Put $C = f(A)$ and let $L$ be a $\sigma$-compact subset of $C$ such that
$\im v \subset L$. Let $\nu\dd \Bb(L) \ni B \mapsto \mu(v^{-1}(B)) \in \RRR_+$. Put $K = A \cap f^{-1}(L)$.
Then $K \in \Bb(A)$ and thus $K$ is a Souslin space. Now it suffices to apply \cite[Theorem~XIV.3.1]{k-m}
to obtain a function $h\dd L \to K$ such that $f \circ h$ is the identity map on $L$ and for every open in $K$
set $U \subset K$, $h^{-1}(U)$ is a member of the $\sigma$-algebra generated by the family of all Souslin subsets of $L$.
This implies that for every Borel subset $B$ of $K$, $h^{-1}(B)$ is absolutely measurable and therefore there is a Borel
function $w\dd L \to K$ and a set $B_0 \in \Bb(L)$ such that $\nu(B_0) = 0$ and $w = h$ on $L \setminus B_0$.
Now put $u = w \circ v$. By \LEM{AMimage}--(A), $u \in M^r_{\mu}(X)$. What is more, $f \circ u$ is $\mu$-almost everywhere
equal to $f \circ h \circ v = v$, which finishes the proof.
\end{proof}

Under the notation of \PRO{image} we get

\begin{cor}{image}
\begin{enumerate}[\upshape(i)]
\item If $X$ is a So-space, then $$\im M^r_{\mu}(f) = \bigcup_A M^r_{\mu}(f(A))$$ where $A$ runs over all separable
   closed subsets of $X$.
\item If for every compact subset $L$ of $\im f$ there is a (separable) Souslin subset $K$ of $X$ such that
   $L \subset f(K)$, then $\im M^r_{\mu}(f) = M^r_{\mu}(\im f)$.
\end{enumerate}
\end{cor}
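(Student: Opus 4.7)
The plan is to extract both assertions directly from \PRO{image}; no new technical machinery is needed. That proposition already gives $\im M^r_{\mu}(f) = \bigcup_K M^r_{\mu}(f(K))$ with $K$ ranging over all $\sigma$-compact subsets of $X$, and it also tells us that $M^r_{\mu}(f(A)) \subset \im M^r_{\mu}(f)$ whenever $A$ is a (separable) Souslin subset of $X$. So the task reduces to replacing $\sigma$-compact subsets of $X$ by closed separable (in (i)) or by the full image (in (ii)).

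For part~(i), the inclusion $\bigcup_A M^r_{\mu}(f(A)) \subset \im M^r_{\mu}(f)$, with $A$ running over closed separable subsets of $X$, is immediate from the second claim of \PRO{image}: by the definition of a So-space, every such $A$ is a Souslin subspace of $X$. For the reverse inclusion, given $v \in \im M^r_{\mu}(f)$, the first claim of \PRO{image} produces a $\sigma$-compact $K \subset X$ with $v \in M^r_{\mu}(f(K))$. Setting $A = \overline{K}$, one gets a closed separable subset of $X$ (closure of the separable set $K$), and since $f(K) \subset f(A)$ is still $\sigma$-compact, $v$ belongs to $M^r_{\mu}(f(A))$.

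For part~(ii), the inclusion $\im M^r_{\mu}(f) \subset M^r_{\mu}(\im f)$ is clear: if $v \in \im M^r_{\mu}(f)$ then, by \PRO{image}, $\im v$ lies in some $\sigma$-compact $f(K) \subset \im f$, so $v \in M^r_{\mu}(\im f)$. For the reverse inclusion, pick $v \in M^r_{\mu}(\im f)$ and choose a $\sigma$-compact $L \subset \im f$ containing $\im v$. By the hypothesis of (ii) there exists a Souslin subset $K \subset X$ with $L \subset f(K)$, so $v \in M^r_{\mu}(f(K))$; the second claim of \PRO{image} then yields $v \in \im M^r_{\mu}(f)$.

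There is no serious obstacle. The only points worth a moment's care are that $\overline{K}$ really is closed and separable (so qualifies in~(i)) and that $\sigma$-compactness of $f(K)$ in $Y$ is inherited by $f(A)$ and $f(K)$ as ambient spaces; both are routine since compactness is an intrinsic topological property, independent of the ambient space.
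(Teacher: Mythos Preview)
Your argument is essentially the one the paper has in mind (the paper gives no explicit proof, simply presenting the corollary as a direct consequence of \PRO{image}), and the overall strategy is correct.

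There is one small gap in your proof of~(ii). The hypothesis is stated only for \emph{compact} subsets $L$ of $\im f$, but you apply it to a $\sigma$-compact $L$. To fix this, write $L = \bigcup_{n} L_n$ with each $L_n$ compact, obtain for each $n$ a Souslin set $K_n \subset X$ with $L_n \subset f(K_n)$, and put $K = \bigcup_n K_n$. A countable union of (separable) Souslin subspaces of $X$ is again Souslin, so $K$ is Souslin and $L \subset f(K)$; now the second claim of \PRO{image} applies as you wrote. With this one-line adjustment the proof is complete.
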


The above result leads to the following

\begin{dfn}{s-map}
A map $f\dd X \to Y$ is said to be an \textit{s-map} if $f$ satisfies the assumption of the point (ii) of \COR{image}.
\end{dfn}

Basic examples of s-maps are closed maps whose domains are So-spaces and proper maps.\par
Now applying the General Scheme and main ideas of Section~3 of \cite{pn} (with the same functor $M$),
thanks to the homeomorphism extension theorem proved in \cite{chapman}, we easily obtain

\begin{thm}{functor}
Let $\Omega$ be a topological space homeomorphic to a nonseparable Hilbert space. Let $\ZzZ$ be the family (category)
of maps between $Z$-sets of $\Omega$ consisting of all pairs $(\varphi,L)$, where $\dom \varphi$, i.e. the domain
of $\varphi$, and $L$ are $Z$-sets of $\Omega$ and $\varphi$ is an $L$-valued continuous function. There is a functor
$\ZzZ \ni (\varphi,L) \mapsto \widehat{\varphi}_L \in \CCc(\Omega,\Omega)$ of extension which satisfies all the claims
of the points \textup{(a)}, \textup{(b)}, \textup{(h)}, \textup{(i)} stated on pages~1--2 of \textup{\cite{pn}}
and the claims of the points \textup{(d)}, \textup{(f)} and \textup{(g)} (of \textup{\cite{pn}}) concerning closures
of images. The functor preserves the properties of being an injection, an embedding or a map with dense image;
and satisfies all the claims of the points \textup{(c)--(g)} of \textup{\cite{pn}} for any s-map $\varphi$.
\end{thm}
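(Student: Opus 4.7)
The plan is to apply the General Scheme of Section~3 of \cite{pn} with the functor $M=M_m$ in place of its separable counterpart. First identify $\Omega$ with $M(X)$ for a Hilbert space $X$ of dimension $w(\Omega)$: by \THM{H} the space $M(X)$ is homeomorphic to a Hilbert space of dimension $\max(w(m),w(X))=w(\Omega)$, so the identification is legitimate. Under it the map $\delta_{m,X}\dd X\to M(X)$ realizes $X$ as a closed $Z$-set $\Delta_m(X)\subset \Omega$ (by \textup{(M16)}), and this copy will serve as a canonical ``base copy'' of $X$ inside $\Omega$ on which the functor $M$ acts transparently.

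Given $(\varphi,L)\in\ZzZ$, I would construct $\widehat{\varphi}_L$ in three steps. First, using the homeomorphism extension theorem for (nonseparable) Hilbert manifolds from \cite{chapman}, choose ambient homeomorphisms $\alpha,\beta\dd \Omega\to\Omega$ moving $\dom \varphi$ and $L$ respectively into $\Delta_m(X)$; as in \cite{pn}, these choices must be organised coherently in $(\varphi,L)$ so as to respect composition. Second, transport $\beta\circ\varphi\circ\alpha^{-1}$ through $\delta_{m,X}$: it becomes a continuous map from a closed subset of $X$ to $X$, which, because $X$ is a Hilbert space and hence an AR, admits a continuous extension $\psi\dd X\to X$ by Dugundji's theorem. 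Third, set
\[
\widehat{\varphi}_L \;=\; \beta^{-1}\circ M(\psi)\circ\alpha.
\]
By (\ref{eqn:ext}), $M(\psi)$ restricts to $\psi$ on $\Delta_m(X)$, so that $\widehat{\varphi}_L$ genuinely extends $\varphi$ on $\dom \varphi$.

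Functoriality of $\widehat{(\cdot)}$ and the claims (a), (b), (h), (i), and the closure-of-image parts of (d), (f), (g), of \cite{pn} then reduce to the analogous properties of $M$ combined with the functorial character of the extension choices $\alpha,\beta$. Preservation of injections, embeddings and maps with dense image comes straight from \textup{(F1)}; the closure-of-image assertions follow from \textup{(F2)}; the pointwise/uniform continuity with respect to the supremum metric (claims (c)--(g) of \cite{pn}) comes from \textup{(F3)}--\textup{(F4)}; and the finer s-map refinement exploits \PRO{image} together with \COR{image}.

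The main obstacle is Step~1: the assignment $(\varphi,L)\mapsto(\alpha,\beta)$ must itself be compatible with composition and identities, for otherwise $\widehat{(\cdot)}$ fails to be functorial. This is precisely what the General Scheme of \cite{pn} accomplishes in the separable setting; porting it to the nonseparable one requires only that Chapman's homeomorphism extension theorem be available for nonseparable Hilbert manifolds (which is the content of \cite{chapman}) and that the functorial properties \textup{(F1)}--\textup{(F4)} of $M$ (already established in Section~5) hold. With these inputs the argument from \cite{pn} transfers with only cosmetic changes, so the proof is really an assembly of previously available pieces rather than a new construction.
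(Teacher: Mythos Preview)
Your proposal is essentially the same as the paper's own argument: both defer to the General Scheme of Section~3 of \cite{pn}, invoke Chapman's homeomorphism extension theorem \cite{chapman} for the nonseparable case, and rely on the functorial properties \textup{(F1)}--\textup{(F4)} and \PRO{image}/\COR{image} for the individual claims. The paper's proof is in fact nothing more than the sentence preceding the theorem.

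One small caveat: your Step~2 inserts a Dugundji extension $\psi\dd X\to X$ of the transported map. You correctly flag Step~1 (the choice of $\alpha,\beta$) as the place where functoriality must be arranged, but the Dugundji step is equally non-canonical---the extension of a composition need not equal the composition of extensions---so it too would obstruct functoriality if taken literally. The General Scheme in \cite{pn} presumably avoids any such ad hoc extension step and lets the functor $M$ itself do the extending (via $M(\varphi)$ applied directly, together with a fixed identification $M(\Omega)\cong\Omega$); since you ultimately defer to \cite{pn} for the coherent organisation, this is a minor point of presentation rather than a substantive gap.
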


\begin{rem}{prob}
Analogous functor as in \THM{functor} can be built using the functor $\widehat{P}$ studied
by Banakh\cite{banakh1,banakh2} and Banakh and Radul\cite{br}. (For a metrizable space $X$, $\widehat{P}(X)$ is the space
of all Borel probabilistic measures supported on $\sigma$-compact subsets of $X$ and for a map $f\dd X \to Y$
and $\mu \in \widehat{P}(X)$, $\widehat{P}(f)(\mu)$ is the transport of $\mu$ under $f$.) Theorem~2.11 of \cite{br}
says that $\widehat{P}(X)$ is homeomorphic to an infinite-dimensional Hilbert space, provided $X$ is completely
metrizable and noncompact. Thus it is enough to apply General Scheme of \cite{pn} and results
of Banakh\cite{banakh1,banakh2} on extending maps and bounded metrics via the functor $\widehat{P}$.
\end{rem}

We end the paper with the following two questions.\par
\textbf{Question 1.} Is $M(M(X))$ homeomorphic to $M(X)$ for an arbitrary metrizable space $X$~?\par
\textbf{Question 2.} Is $M_{\mu}(X)$ homeomorphic to $M^r_{\mu}(X)$ for an arbitrary metrizable space $X$
and any finite measure space $(\Omega,\Mm,\mu)$~?\par
Note that the affirmative answer for Question~2 implies the affirmative one for Question~1.

\end{document}